\newtheorem{thm}{Theorem}[section]
\newtheorem{prop}[thm]{Proposition}
\newtheorem{example}[thm]{Example}
\newtheorem{lem}[thm]{Lemma}
\newtheorem{false statement}{False statement}
\newtheorem{cor}[thm]{Corollary}
\newtheorem{fact}[thm]{Fact}
\theoremstyle{definition}
\newtheorem{defn}[thm]{Definition}
\newtheorem{claim}{Claim}
\makeatletter \@addtoreset{equation}{section}
\def\hh{\mathcal{H}}
\def\hht{\mathcal{T}}
\def\hf{\mathcal{F}}
\def\hg{\mathcal{G}}
\def\ha{\mathcal{A}}
\def\hb{\mathcal{B}}
\def\he{\mathcal{E}}
\def\hs{\mathcal{S}}
\def\hr{\mathcal{R}}
\def\hj{\mathcal{J}}
\def\hl{\mathcal{L}}
\def\hp{\mathcal{P}}
\begin{document}
\title{\bf\Large Improved bounds concerning the maximum degree of intersecting hypergraphs}
\date{}
\author{Peter Frankl$^1$, Jian Wang$^2$\\[10pt]
$^{1}$R\'{e}nyi Institute, Budapest, Hungary\\[6pt]
$^{2}$Department of Mathematics\\
Taiyuan University of Technology\\
Taiyuan 030024, P. R. China\\[6pt]
E-mail:  $^1$frankl.peter@renyi.hu, $^2$wangjian01@tyut.edu.cn
}
\maketitle
\begin{abstract}
For positive integers $n>k>t$ let $\binom{[n]}{k}$ denote the collection of all $k$-subsets of the standard $n$-element set $[n]=\{1,\ldots,n\}$. Subsets of $\binom{[n]}{k}$ are called $k$-graphs. A $k$-graph $\hf$ is called $t$-intersecting if $|F\cap F'|\geq t$ for all $F,F'\in \hf$. One of the central results of extremal set theory is the Erd\H{o}s-Ko-Rado Theorem which states that for $n\geq (k-t+1)(t+1)$ no $t$-intersecting $k$-graph has more than $\binom{n-t}{k-t}$ edges. For $n$ greater than this threshold the $t$-star (all $k$-sets containing a fixed $t$-set) is the only family attaining this bound. Define $\hf(i)=\{F\setminus \{i\}\colon i\in F\in \hf\}$. The quantity $\varrho(\hf)=\max\limits_{1\leq i\leq n}|\hf(i)|/|\hf|$ measures how close a $k$-graph is to a star. The main result (Theorem 1.5) shows that $\varrho(\hf)>1/d$ holds if $\hf$ is 1-intersecting, $|\hf|>2^dd^{2d+1}\binom{n-d-1}{k-d-1}$  and $n\geq 4(d-1)dk$. Such a statement can be deduced from the results of \cite{F78-2} and \cite{DF}, however only for much larger values of $n/k$ and/or $n$. The proof is purely combinatorial, it is based on a new method: shifting ad extremis. The same method is applied to obtain some nearly optimal bounds in the case of $t\geq 2$ (Theorem 1.11) along with a number of related results.
\end{abstract}

\section{Introduction}

For positive integers $n\geq k$ let $[n]=\{1,\ldots,n\}$ be the standard $n$-element set and $\binom{[n]}{k}$ the collection of its $k$-subsets. A family $\hf \subset \binom{[n]}{k}$ is called {\it $t$-intersecting} if $|F\cap F'|\geq t$ for all $F,F'\in \hf$ and $t$ a positive integer. In the case $t=1$ we usually omit $t$ and speak of intersecting families. Let us recall one of the fundamental results of extremal set theory.

\begin{thm}[Exact Erd\H{o}s-Ko-Rado Theorem (\cite{ekr}, \cite{F78}, \cite{W3})] Let $k\geq t>0$, $n\geq n_0(k,t)=(k-t+1)(t+1)$. Suppose that $\hf\subset \binom{[n]}{k}$ is $t$-intersecting. Then
\begin{align}\label{ineq-ekr}
|\hf|\leq \binom{n-t}{k-t}.
\end{align}
\end{thm}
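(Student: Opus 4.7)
The plan is to first apply the shifting technique to reduce to a shifted family and then to argue by induction on $t$. For $1 \leq i < j \leq n$, define the shift $S_{ij}(\hf)$: for each $F \in \hf$ with $j \in F$ and $i \notin F$, replace $F$ by $(F \setminus \{j\}) \cup \{i\}$ provided the latter is not already in $\hf$. A short verification shows that $S_{ij}$ preserves $|\hf|$ and the $t$-intersecting property (if two shifted sets fail to $t$-intersect, one can trace back and produce two original sets that fail to $t$-intersect). Iterating these operations over all pairs $(i,j)$ until no shift has any effect, I obtain a shifted family of the same size, so I may assume $\hf$ is shifted without loss of generality.

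With $\hf$ shifted, I would split $\hf = \hf_1 \sqcup \hf_0$, where $\hf_1 = \{F \in \hf \colon 1 \in F\}$ and $\hf_0 = \{F \in \hf \colon 1 \notin F\}$. Setting $\hg = \{F \setminus \{1\} \colon F \in \hf_1\} \subset \binom{[2,n]}{k-1}$, the family $\hg$ is $(t-1)$-intersecting as a $(k-1)$-graph on $n-1$ points. The inductive hypothesis on $t$ (with the convention that a $0$-intersecting family is arbitrary) then bounds $|\hg| \leq \binom{n-t}{k-t}$, since the required threshold $n - 1 \geq (k-t+1)t$ is implied by $n \geq (k-t+1)(t+1)$.

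It then suffices to show $\hf_0 = \emptyset$, or more generally that $|\hg| + |\hf_0| \leq \binom{n-t}{k-t}$. Here I would exploit the structural consequence of shiftedness: every $F \in \hf_0$ must $t$-intersect every set of the form $G \cup \{1\}$ with $G \in \hg$, equivalently $|F \cap G| \geq t$, so $\hf_0$ and $\hg$ cross-$t$-intersect. Combined with the fact that $\hf_0$ is itself $t$-intersecting and, by a standard shifting lemma, forced to sit inside $\binom{[2, 2k-t]}{k}$, this pins $|\hf_0|$ far below the slack $\binom{n-t}{k-t} - |\hg|$. The base case $t = 1$ requires a separate (but classical) direct shifting argument.

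The main obstacle is reaching the \emph{sharp} threshold $n_0(k,t) = (k-t+1)(t+1)$. Elementary shifting typically loses a multiplicative factor and only yields the bound for $n \geq c \cdot kt$ with $c > 1$; descending to equality requires either the delicate case analysis of shifted extremal families carried out by Frankl in \cite{F78}, or Wilson's algebraic treatment \cite{W3} of the Johnson scheme, where the sharp constant emerges from eigenvalue bounds on the Kneser graph. Without invoking one of these sharper tools, the natural induction-plus-shifting framework stops short of the exact Erd\H{o}s--Ko--Rado threshold.
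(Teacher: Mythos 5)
There is a genuine gap, and you name it yourself: your argument never reaches the stated threshold $n_0(k,t)=(k-t+1)(t+1)$, so it does not prove the theorem as stated. Two concrete problems. First, the inductive step has no room in it: induction on $t$ gives $|\hg|\leq\binom{n-t}{k-t}$ for $\hg=\hf(1)$, which already exhausts the entire bound, so the claim that $|\hf_0|$ is ``pinned far below the slack $\binom{n-t}{k-t}-|\hg|$'' is vacuous when $\hg$ is large (e.g.\ when $\hf$ is close to a $t$-star the slack is essentially zero, yet nothing in your setup forces $\hf_0=\emptyset$ at that point). What you would actually need is a stability-type statement saying that $\hf_0\neq\emptyset$ forces $|\hg|$ to drop by more than $|\hf_0|$, and that is exactly the hard content of the theorem. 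Second, the ``standard shifting lemma'' you invoke is false: shiftedness does not force $\hf_0\subset\binom{[2,2k-t]}{k}$. For instance $\hf=\{F\in\binom{[n]}{3}\colon |F\cap[3]|\geq 2\}$ is shifted and intersecting ($t=1$, $k=3$), yet $\hf_0$ contains $\{2,3,x\}$ for every $x\geq 4$, far outside $[2,5]$. What shiftedness really gives is only that $\hf(\bar 1)$ is $(t+1)$-intersecting (this is Proposition \ref{prop-1.2} of the paper with $s=1$), or the pseudo-$t$-intersecting ``walk'' condition, and neither of these confines $\hf_0$ to a bounded ground set.

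For comparison: the paper does not prove this statement at all; it is quoted as the classical Exact Erd\H{o}s--Ko--Rado Theorem with the sharp threshold attributed to \cite{ekr} for $t=1$, to \cite{F78} for $t\geq 15$, and to Wilson \cite{W3} for all $t\geq 1$. So the expected justification is a citation, and reproving it at the exact threshold genuinely requires either Frankl's delicate analysis of shifted extremal configurations or Wilson's eigenvalue method on the Johnson scheme --- precisely the tools your last paragraph concedes you are not supplying. As written, your shifting-plus-induction outline would at best recover a bound of the form $n\geq ckt$ with $c>1$ (and even that needs the two repairs above), not the theorem in question.
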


Let us note that $|\hs(n,k,t)|=\binom{n-t}{k-t}$ holds for the {\it full star}
\[
\hs(n,k,t) = \left\{S\in \binom{[n]}{k}\colon [t]\subset S\right\}
\]
 and for $n>n_0(k,t)$ up to isomorphism $\hs(n,k,t)$ is the only family to achieve equality in \eqref{ineq-ekr}. The exact bound $n_0(k,t)=(k-t+1)(t+1)$ is due to Erd\H{o}s, Ko and Rado in the case $t=1$. For $t\geq 15$ it was established in \cite{F78}. Wilson \cite{W3} closed the gap $2\leq t\leq 14$ by a proof valid for all $t\geq 1$.

 Let us recall some standard notation. Set $\cap \hf=\cap\{F\colon F\in \hf\}$. If $|\cap \hf|\geq t$ then $\hf$ is called a {\it $t$-star}, for $t=1$ we usually omit the 1.

 For a subset $E\subset [n]$ and a family $\hf\subset \binom{[n]}{k}$ define
 \[
\hf(E) =\{F\setminus E\colon E\subset F\in\hf\}, \ \hf(\overline{E})=\{F\in \hf\colon F\cap E=\emptyset\}.
 \]
In the case $E=\{i\}$ we simply use $\hf(i)$ and $\hf(\bar{i})$ to denote $\hf(\{i\})$ and $\hf(\overline{\{i\}})$, respectively. In analogy,
\[
\hf(u,v,\bar{w}):= \left\{F\setminus\{u,v\}\colon F\in \hf, F\cap \{u,v,w\}=\{u,v\}\right\}.
\]

Let $(x_1,\ldots,x_k)$ denote the set $\{x_1,\ldots,x_k\}$ where we know or want to stress that $x_1<\ldots<x_k$.

Let us define the {\it shifting partial order} $\prec$ where $P\prec Q$ for $P=(x_1,\ldots,x_k)$  and $Q=(y_1,\ldots,y_k)$ iff $x_i\leq y_i$ for all $1\leq i\leq k$. This partial order can be traced back to \cite{ekr}.

A family $\hf\subset \binom{[n]}{k}$ is called {\it initial} if $F\prec G$, $G\in \hf$ always implies $F\in \hf$. Note that $|\hf(1)|\geq |\hf(2)|\geq \ldots \geq |\hf(n)|$ for an initial family. Initial families have some nice properties. Let us give here one simple example.

\begin{prop}\label{prop-1.2}
Suppose that $\hf\subset 2^{[n]}$ is initial and $t$-intersecting. Then $\hf(\overline{[s]})$ is $(t+s)$-intersecting.
\end{prop}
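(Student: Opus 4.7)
The plan is to show directly that for any $F, F' \in \hf(\overline{[s]})$, setting $m := |F \cap F'|$, we have $m \geq t+s$. The central idea is that since $F$ and $F'$ both avoid $[s]$, there is room to push elements of $F$ down into $[s]$ via the shifting partial order; applying this to elements of $F \cap F'$ reduces the intersection with $F'$ in a controlled way, and then the $t$-intersecting hypothesis of $\hf$ will finish the job.

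Concretely, set $j := \min(m, s)$ and choose any $j$-element subsets $A \subseteq F \cap F'$ and $B \subseteq [s]$. Define $F_1 := (F \setminus A) \cup B$. I first claim $F_1 \prec F$, so $F_1 \in \hf$ by the initial property. Indeed, every element of $F$ exceeds $s$ (because $F \cap [s] = \emptyset$) while $B \subseteq [s]$, so listing both sets in increasing order places $B$ in the bottom $j$ positions of $F_1$, each bounded by $s$ and hence strictly below the corresponding element of $F$; for each later position $\ell > j$, at most $j$ of the $\ell$ smallest elements of $F$ have been removed, so the $\ell$-th smallest element of $F_1$ is at most the $\ell$-th smallest element of $F$. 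This gives $F_1 \prec F$, hence $F_1 \in \hf$.

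It remains to compute $|F_1 \cap F'|$. Since $A \subseteq F'$ but $B \cap F' = \emptyset$ (as $F' \cap [s] = \emptyset$), one has $F_1 \cap F' = (F \cap F') \setminus A$, of cardinality $m - j$. The $t$-intersecting hypothesis applied to $F_1, F' \in \hf$ now gives $m - j \geq t$: when $j = s$ this is exactly $m \geq t+s$, as desired, and when $j = m < s$ it yields $0 \geq t \geq 1$, which is impossible. I do not anticipate any serious obstacle: the only step requiring care is the position-by-position verification of $F_1 \prec F$, and even that reduces to a single short counting observation.
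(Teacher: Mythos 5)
Your proof is correct and follows essentially the same route as the paper: replace elements of $F\cap F'$ by elements of $[s]$, use initiality (via the shifting partial order) to keep the modified set in $\hf$, and then invoke the $t$-intersecting property. The only cosmetic difference is that the paper argues by contradiction, swapping $j+1$ intersection elements of $F'$ for $[j+1]$ to force an intersection of size exactly $t-1$, whereas you swap $\min(|F\cap F'|,s)$ elements of $F$ and conclude directly; your verification of $F_1\prec F$ is also correct.
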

\begin{proof}
Suppose for contradiction that $F,F'\in \hf(\overline{[s]})$, $E:= F\cap F'$ satisfies $|E|=t+j$, $j+1\leq s$. Fix $E_0\subset E$, $|E_0|=j+1$. Then $F'':=[j+1]\cup F'\setminus E_0$ satisfies $F''\prec F'$ whence $F''\in \hf$. However $F\cap F''=E\setminus E_0$ and $|E\setminus E_0|=t-1$, the desired contradiction.
\end{proof}

Let us define the quantity
\[
\varrho(\hf) =\max\left\{\frac{|\hf(i)|}{|\hf|}\colon 1\leq i\leq n\right\}.
\]
Since $\varrho(\hf)=1$ if and only if $\hf$ is a star, in a way $\varrho(\hf)$ measures how far a family is from a star.

A set $T$ is called a {\it $t$-transversal} of $\hf$ if $|T\cap F|\geq t$ for all $F\in \hf$. If $\hf$ is $t$-intersecting then each $F\in\hf$ is a $t$-transversal. Define
\[
\tau_t(\hf) =\min\{|T|\colon T \mbox{ is a $t$-transversal of }\hf\}.
\]
For $t=1$ we usually omit the 1.

\begin{prop}\label{prop-1.3}
If $\hf$ is $t$-intersecting, then
\begin{align}\label{ineq-1.2}
\varrho(\hf) \geq \frac{t}{\tau_t(\hf)}.
\end{align}
\end{prop}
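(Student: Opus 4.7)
The plan is a short double counting / averaging argument, working with a minimum $t$-transversal.

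First I would fix a $t$-transversal $T$ of $\hf$ of minimum size, so $|T|=\tau_t(\hf)$. By the defining property of a $t$-transversal, every $F\in\hf$ satisfies $|F\cap T|\ge t$. I would then double count the pairs $(i,F)$ with $i\in T$ and $i\in F\in\hf$. Counting by $F$ first gives $\sum_{F\in\hf}|F\cap T|\ge t|\hf|$, while counting by $i$ first gives $\sum_{i\in T}|\hf(i)|$. Hence
\[
\sum_{i\in T}|\hf(i)|\ \ge\ t|\hf|.
\]

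Next I would apply averaging over the $\tau_t(\hf)$ elements of $T$ to extract a single index $i^{*}\in T$ with
\[
|\hf(i^{*})|\ \ge\ \frac{t|\hf|}{\tau_t(\hf)}.
\]
Since $\varrho(\hf)$ is defined as the maximum of $|\hf(i)|/|\hf|$ over \emph{all} $i\in[n]$, this index $i^{*}\in T\subset[n]$ certainly gives a valid lower bound, yielding $\varrho(\hf)\ge t/\tau_t(\hf)$, which is exactly \eqref{ineq-1.2}.

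There is essentially no obstacle here: the only thing one must be careful about is that the definition of $t$-transversal applies to \emph{every} edge of $\hf$ (so the sum is bounded below by $t|\hf|$), and that the averaging is performed over $T$ rather than over $[n]$. The $t$-intersecting hypothesis is in fact only used to guarantee $\tau_t(\hf)$ is well defined (any $F\in\hf$ itself is a $t$-transversal, so $\tau_t(\hf)\le k$); the inequality \eqref{ineq-1.2} follows from the existence of a finite $t$-transversal alone.
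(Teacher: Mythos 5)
Your proof is correct and is essentially the paper's own argument: fix a minimum $t$-transversal $T$, count the incidences between $T$ and the edges to get $t|\hf|\le\sum_{i\in T}|\hf(i)|$, and average over $T$ to find a vertex of degree at least $t|\hf|/\tau_t(\hf)$. Your closing observation that only the existence of a $t$-transversal (not the full $t$-intersecting hypothesis) is needed is accurate and harmless.
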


\begin{proof}
Fix a $t$-transversal $T$ of $\hf$ with $|T| = \tau_t(\hf)$. Then
\[
t|\hf| \leq \sum_{i\in T} |\hf(i)|\leq |T| \cdot \max\{|\hf(i)|\colon i\in T\},
\]
implying \eqref{ineq-1.2}.
\end{proof}

Obviously, $\tau_t(\hf)=t$ iff $\hf$ is a $t$-star.

\begin{example}
For $n>k>t>0$ define
\[
\ha(n,k,t) =\left\{A\in \binom{[n]}{k}\colon |A\cap [t+2]|\geq t+1\right\}.
\]
\end{example}

Clearly, $\ha=\ha(n,k,t)$ is $t$-intersecting, $\varrho(\ha) =\frac{t+1+o(1)}{t+2}$, $\tau_t(\hf) =t+1$. We should note that for $2k-t<n<(k-t+1)(t+1)$, $|\ha|> \binom{n-t}{k-t}$ with equality for $n=(k-t+1)(t+1)$.

In \cite{F78-2} it was shown that for any positive $\varepsilon$ and $n>n_1(k,t,\varepsilon)$, $\varrho(\hf)<1-\varepsilon$ implies $|\hf|\leq |\ha|$ for any $t$-intersecting family $\hf\subset \binom{[n]}{k}$.

The value of $n_1(k,t,\varepsilon)$ is implicit in \cite{F78-2}. With careful calculation (cf. e.g. \cite{FW2022-2}) for fixed $\varepsilon >0$ one can prove a bound quadratic in $k$. Dinur and Friedgut \cite{DF} introduced the so-called junta-method that leads to strong results for $n>ck$, however the value of the constant is astronomically large and it is further dependent on the particular problem (the same is true for the recent advances of Keller and Lifschitz \cite{KL}).

The aim of the present paper is to prove some similar results concerning $\varrho(\hf)$ for $t$-intersecting families for $n>ck$ with relatively small constants $c$. Let us state here our main result for the case $t=1$.

\begin{thm}\label{main-1}
Let $n,k,d$ be  integers, $k> d\geq 2$, $n\geq 4(d-1)dk$.
If $\hf\subset \binom{[n]}{k}$ is intersecting and  $|\hf|\geq 2^dd^{2d+1}\binom{n-d-1}{k-d-1}$, then $\varrho(\hf)> \frac{1}{d}$.
\end{thm}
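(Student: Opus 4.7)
I argue by contradiction. Suppose $\varrho(\hf)\leq 1/d$, so $|\hf(i)|\leq |\hf|/d$ for every $i\in[n]$; the plan is to force $|\hf|\leq 2^d d^{2d+1}\binom{n-d-1}{k-d-1}$, contradicting the hypothesis.

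The natural first move -- pass to an initial family and apply Proposition \ref{prop-1.2} with $s=d$ -- is blocked because shifting $S_{ij}$ concentrates degree on vertex $i$ and may push $|\hf(i)|$ above $|\hf|/d$. Instead I would follow the \emph{shifting ad extremis} method advertised in the abstract: perform only those shifts that preserve both the intersecting property and the bound $\varrho(\cdot)\leq 1/d$, iterating until no such admissible shift remains. The output $\hg$ has $|\hg|=|\hf|$, is intersecting, satisfies $\varrho(\hg)\leq 1/d$, and is initial on $[n]\setminus D$ for a distinguished subset $D\subset[n]$ of ``saturated'' coordinates (those where further shifts are blocked only by the $\varrho$-constraint). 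The first technical claim is that $|D|\leq d$: heuristically, at most $\lfloor\varrho^{-1}\rfloor=d$ coordinates can have degree near the threshold $|\hg|/d$.

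With this $D$ in hand, the substitution argument of Proposition \ref{prop-1.2} transfers to $D$ in place of $[d]$: given $F,F'\in\hg$ both disjoint from $D$ with $|F\cap F'|\leq d$, replace the $|F\cap F'|$ common elements in $F'$ by distinct elements of $D$, using initiality of $\hg$ on $[n]\setminus D$ to conclude the replacement $F''$ lies in $\hg$; then $F\cap F''=\emptyset$, contradicting intersectingness. Hence $\hg(\overline D)$ is $(d+1)$-intersecting on $[n]\setminus D$. The hypothesis $n\geq 4(d-1)dk$ comfortably exceeds the Erd\H{o}s--Ko--Rado threshold $(k-d)(d+2)+d$ for $(d+1)$-intersecting $k$-graphs on $n-d$ vertices, so Theorem 1.1 yields
\begin{align*}
|\hg(\overline D)|\leq \binom{n-2d-1}{k-d-1}.
\end{align*}
The remaining edges are bounded by stratifying $\hg\setminus\hg(\overline D)$ according to $E:=F\cap D$. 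For each nonempty $E\subseteq D$ the trace $\{F\setminus E\colon F\in\hg,\,F\cap D=E\}\subset\binom{[n]\setminus D}{k-|E|}$ is intersecting, so Theorem 1.1 bounds it by $\binom{n-d-1}{k-|E|-1}$. Using $n\geq 4(d-1)dk$ to absorb the binomial ratios $\binom{n-d-1}{k-s-1}/\binom{n-d-1}{k-d-1}\leq (4d^2)^{d-s}$, together with the degree bounds $|\hg(E)|\leq|\hg|/d$ to trim the large-$|E|$ strata, summing over the $2^d$ subsets of $D$ produces $|\hf|=|\hg|\leq 2^d d^{2d+1}\binom{n-d-1}{k-d-1}$, the desired contradiction.

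\textbf{Main obstacle.} The crux is the first step: formalizing shifting \emph{ad extremis}, proving its termination, and showing the stopping configuration is initial on the complement of a $d$-element saturated set $D$ through which the Proposition \ref{prop-1.2} substitution carries over. Controlling $|D|\leq d$ is critical, since $|D|>d$ would make the link only $d$-intersecting and yield a bound too weak to close the argument. The binomial bookkeeping in the final paragraph is technical but, given the wide gap $n\geq 4(d-1)dk$, essentially standard.
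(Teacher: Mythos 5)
Your overall strategy (argue by contradiction and shift ad extremis with respect to the constraint $\varrho\leq 1/d$) is indeed the paper's method, but the structural claims you build on it do not hold, and the final counting would fail even if they did. First, the claim that the stopped family is initial outside a ``saturated'' set $D$ with $|D|\leq d$ is unjustified: a pair $(i,j)$ is shift-resistant precisely when $|\hf(i)|+|\hf(j)|>|\hf|/d$, so a vertex need only have degree above $|\hf|/(2d)$ to block shifts, and there can be far more than $d$ such vertices (up to order $dk$). The paper can only bound the \emph{matching number} of the shift-resistant graph, by $d^2-d$, and even that requires real work (the degree identity giving $|\hf(\overline{D'})|\geq(d-1)|\hf(D')|$ and the Hilton-Lemma bound $|\hf(D')|<d\binom{n-d-1}{k-d-1}$ for every $d$-set $D'$); the outcome is initiality only outside a set $X$ of size $2d^2-2d$. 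Second, even granting your $D$, the Proposition \ref{prop-1.2}-style substitution does not transfer: replacing elements of $F'\setminus D$ by elements of $D$ requires $\hg$ to be stable under shifts $S_{ij}$ with $i\in D$, $j\notin D$, i.e.\ shifts \emph{into} the saturated vertices, and those are exactly the shifts that were blocked. Initiality on $[n]\setminus D$ gives you nothing here, so the conclusion that $\hg(\overline{D})$ is $(d+1)$-intersecting is unsupported.

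Third, the stratification step is flawed twice over. The strata $\{F\setminus E\colon F\in\hg,\ F\cap D=E\}$ with $\emptyset\neq E\subsetneq D$ need not be intersecting (two members may meet only inside $E$), so EKR does not apply to them. And the bookkeeping goes the wrong way: for $|E|=s<d$ one has
\[
\binom{n-d-1}{k-s-1}\Big/\binom{n-d-1}{k-d-1}\;\approx\;\Bigl(\tfrac{n-k}{k}\Bigr)^{d-s},
\]
which is at least $(4d(d-1)-1)^{d-s}$ and is unbounded as $n/k$ grows; these terms cannot be absorbed into $2^dd^{2d+1}\binom{n-d-1}{k-d-1}$. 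The whole difficulty of the theorem is to show that the low-intersection strata are genuinely small, and this is where the paper's argument diverges from yours: after bounding the matching number it introduces the threshold system of sets $T$ with $|\hf(T)|>(2d^2)^{-|T|}|\hf|$, takes a maximal such $T$ (necessarily $|T|\leq d-1$), and uses a fresh block $Q$ of $2d-|T|$ vertices outside $X\cup T$ together with cross-intersecting and pseudo-intersecting bounds (Proposition \ref{fk2021} and \eqref{ineq-walk}) to prove $|\hf(\overline{T})|<2^{2d}\binom{n-d-1}{k-d-1}<|\hf|/d$, whence some $x\in T$ has $|\hf(x)|>|\hf|/d$. That mechanism, which you do not supply a substitute for, is the missing core of the proof.
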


Let us stress once more that $\varrho(\hf)>\frac{1}{d}$ follows from the results of \cite{F78-2} and \cite{DF} however only for much larger value of $n$.

On the other hand for initial families one can deduce $\varrho(\hf)>\frac{1}{d}$ under much milder constraints (cf. \cite{FKupavskii}). The problem is that one cannot transform a general family into an initial family without increasing $\varrho(\hf)$. To circumvent this difficulty we are going to apply the recently developed method of shifting ad extremis that we will explain in Section 2.

Two families $\hf$, $\hg$ are called cross-intersecting if $F\cap G\neq\emptyset$ for all $F\in \hf$, $G\in \hg$. Our next result is for cross-intersecting families.

\begin{thm}\label{thm-main3}
Suppose that $n\geq 39k$, $\hf,\hg\subset \binom{[n]}{k}$ are cross-intersecting with $|\hf|,|\hg|\geq 2\binom{n-2}{k-2}+4\binom{n-3}{k-3}$. Then $\min\{\varrho(\hf),\varrho(\hg)\}>\frac{1}{2}+\frac{k-2}{2(n-2)}$.
\end{thm}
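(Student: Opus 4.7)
The strategy mirrors Theorem \ref{main-1}: apply the shifting ad extremis method of Section 2 to both $\hf$ and $\hg$ simultaneously, producing families that are sufficiently shifted while preserving the cross-intersecting property and both ratios $\varrho(\hf)$ and $\varrho(\hg)$. In the resulting configuration, $\tau(\hf)$ equals the least $t$ such that $[t]$ is a transversal of $\hf$, and analogously for $\hg$. If $\tau(\hf) = 1$ then $\hf$ is a star at some element $i$; cross-intersection forces $i \in G$ for every $G \in \hg$, so $\hg$ is a star too and $\varrho(\hf) = \varrho(\hg) = 1$. Thus we may assume $\tau(\hf), \tau(\hg) \geq 2$.

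The size hypothesis is designed to rule out $\tau \geq 3$: a standard count shows that for a shifted intersecting $k$-graph whose minimal transversal is $[3]$, the size is at most $2\binom{n-2}{k-2} + 4\binom{n-3}{k-3} - 1$, contradicting the lower bound on $|\hf|$. The same applies to $\hg$. Hence $\tau(\hf) = \tau(\hg) = 2$ and the shifted transversal is $\{1, 2\}$ in both cases. Partition
\[
\hf = \hf_{12} \cup \hf_1 \cup \hf_2, \qquad \hf_{12} = \{F \in \hf : \{1,2\} \subset F\}, \quad \hf_j = \{F \in \hf : F \cap [2] = \{j\}\},
\]
and similarly for $\hg$. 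A direct calculation yields $|\hf(1)| + |\hf(2)| = 2|\hf_{12}| + |\hf_1| + |\hf_2| = |\hf| + |\hf_{12}|$, so
\[
\varrho(\hf) \geq \frac{\max(|\hf(1)|, |\hf(2)|)}{|\hf|} \geq \frac{1}{2} + \frac{|\hf_{12}|}{2|\hf|}.
\]
It therefore suffices to prove $(n-k)|\hf_{12}| > (k-2)(|\hf_1| + |\hf_2|)$, and the symmetric statement for $\hg$.

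This final inequality is extracted from the cross-intersecting condition: the reduced $(k-1)$-graphs $\hf_1(1)$ and $\hg_2(2)$ on $[3, n]$ are cross-intersecting, as are $\hf_2(2)$ and $\hg_1(1)$. Combined with the universal bounds $|\hf_{12}| \leq \binom{n-2}{k-2}$ and $|\hf_j| \leq \binom{n-2}{k-1}$ together with the lower bound $|\hg| \geq 2\binom{n-2}{k-2} + 4\binom{n-3}{k-3}$ (which forces $|\hg_{12}|$, and through the cross-intersection then $|\hf_{12}|$, to be substantial), a Hilton-type estimate delivers the desired inequality; the margin $n \geq 39k$ corresponds to $(k-2)/(n-2) \leq 1/37$, which is exactly the room needed. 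I expect the main obstacle to be the quantitative bookkeeping in this last step: one has to couple the two cross-intersecting pairs $(\hf_1(1), \hg_2(2))$ and $(\hf_2(2), \hg_1(1))$ with the size lower bounds on both $\hf$ and $\hg$ so as to extract the sharp factor $(k-2)/(n-2)$ rather than just a positive constant, and to verify that shifting ad extremis leaves all of these inequalities intact.
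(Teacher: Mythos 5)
There are genuine gaps, and the plan diverges from what actually makes the theorem work. First, your use of shifting ad extremis misreads the method: the shifts $S_{ij}$ do \emph{not} preserve $\varrho(\hf)$ and $\varrho(\hg)$ (they typically increase them), so one cannot shift ``while preserving both ratios'' and end with initial families. The method is indirect: one assumes $\varrho(\hf),\varrho(\hg)\leq \frac12$ (say), shifts ad extremis with respect to that property, and must then live with shift-resistant pairs; the terminal families need not be initial, and the paper's whole proof of the key step (Lemma \ref{lem-main3}: $\max\{\varrho(\hf),\varrho(\hg)\}>\frac12$) consists of handling exactly this --- bounding $|\hf(P)|,|\hg(P)|$ for all pairs $P$, showing the shift-resistant graphs have matching number at most $2$, working with families that are initial only on $[n-8]$, and invoking Lemmas \ref{lem3.7} and \ref{lem3.8}. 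Your step ``in the resulting configuration the minimal transversal is an initial segment'' therefore has no justification. Second, the transversal dichotomy itself is flawed: $\hf$ and $\hg$ are only cross-intersecting, not intersecting, so bounds on intersecting families with covering number $3$ do not apply to $\hf$ alone; and even for initial \emph{intersecting} families the claimed ``standard count'' is false --- families with $\tau=3$ can have size of order $k^2\binom{n-3}{k-3}$, which exceeds $2\binom{n-2}{k-2}+4\binom{n-3}{k-3}\approx \bigl(\tfrac{2(n-2)}{k-2}+4\bigr)\binom{n-3}{k-3}$ once $k$ is moderately large at $n=39k$. Likewise, if $\tau(\hf)=1$ then $\hf$ is a star at some $i$, but cross-intersection forces $i\in G$ for all $G\in\hg$ only when $\hf$ is essentially the full star, not in general.

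Third, even granting the reduction to $\tau(\hf)=\tau(\hg)=2$ with transversal $\{1,2\}$, the inequality you actually need, $(n-k)|\hf_{12}|>(k-2)(|\hf_1|+|\hf_2|)$, is precisely where the content lies, and you leave it as ``a Hilton-type estimate'' with no derivation; nothing in the setup guarantees $|\hf_{12}|$ is large (a priori it could be $0$ if only cross-intersection is used naively). The paper's route is different and you may want to compare: it first proves $\max\{\varrho(\hf),\varrho(\hg)\}>\frac12$ (Lemma \ref{lem-main3}, the hard part, via shifting ad extremis with all the machinery above), and then upgrades to the stated bound for \emph{both} families by a short Kruskal--Katona/Hilton computation: from $|\hf(1)|>\frac12|\hf|\geq\binom{n-2}{k-2}+\binom{n-4}{k-3}+\binom{n-5}{k-3}$ one gets $|\hg(\bar 1)|\leq\binom{n-3}{k-2}+\binom{n-5}{k-3}<\binom{n-2}{k-2}$, hence $\varrho(\hg)>\frac12+\frac{k-2}{2(n-2)}$, and then symmetrically for $\hf$. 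To repair your write-up you would essentially have to prove Lemma \ref{lem-main3} (or an equivalent), which your proposal currently bypasses.
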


\begin{example}\label{examp-1}
Let $n\geq 2k$. Define
\[
\hp=\left\{P\in \binom{[n]}{k}\colon (1,2)\subset P \mbox{ or }(3,4)\subset P\right\},\ \hr=\left\{R\in \binom{[n]}{k}\colon (1,3)\subset R \mbox{ or }(2,4)\subset R\right\},
\]
and
\[
\hs=\left\{S\in \binom{[n]}{k}\colon (1,4,5)\subset S \mbox{ or }(2,3,5)\subset S\right\}.
\]
Clearly,  $\hp\cup \hs,\hr\cup \hs$ are cross-intersecting. Simple computation shows that
\begin{align*}
|\hp\cup \hs|=|\hr\cup \hs|>2\binom{n-2}{k-2}+2\binom{n-3}{k-3}-5\binom{n-4}{k-4}
\end{align*}
and
\[
\varrho(\hp\cup \hs)=\varrho(\hr\cup\hs)<\frac{1}{2}+\frac{(k-2)^2}{(n-2)^2}<\frac{1}{2}+\frac{k-2}{2(n-2)}.
\]
This shows that $4{n-3 \choose k-3}$ in Theorem \ref{thm-main3} cannot be replaced by $c{n-3 \choose k-3}$ for $c<2$.
\end{example}

\begin{example}\label{examp-1}
Set $\ha=\{(1,2),(3,4)\}$, $\hb=\{(1,3),(1,4),(2,3),(2,4)\}$. Define
\[
\hf_\ha=\left\{F\in \binom{[n]}{k}\colon (1,2)\subset F \mbox{ or }(3,4)\subset F\right\}, \
\hg_\hb=\left\{G\in \binom{[n]}{k}\colon \exists B\in \hb, B\subset G\right\}.
\]
Note that $\hf_\ha,\hg_\hb$ are cross-intersecting. Simple computation shows that $\varrho(\hf_\ha) <\frac{1}{2}+\frac{k-2}{2(n-2)}, \varrho(\hg_\hb)<\frac{1}{2}+\frac{k-2}{2(n-2)}$ and
\[
|\hf_\ha|=2\binom{n-2}{k-2}-\binom{n-4}{k-4}, \ |\hg_\hb|=4\binom{n-4}{k-2}+4\binom{n-4}{k-3}+\binom{n-4}{k-4}.
\]
This shows in a strong sense that it is not sufficient to have one of the two families being considerably larger than the bound in Theorem \ref{thm-main3}.
\end{example}

Let $\hht(n,k)$ denote the triangle family defined by
\[
\hht(n,k) =\left\{F\in \binom{[n]}{k}\colon |F\cap[3]|\geq 2\right\}.
\]
Note that $\hht(n,k)$ is initial and $\varrho(\hht(n,k))=\frac{2}{3}+o(1)$.

\begin{thm}\label{thm-main4}
Suppose that $n\geq 39k$, $\hf,\hg\subset \binom{[n]}{k}$ are cross-intersecting with $|\hf|,|\hg|\geq |\hht(n,k)|$. Then $\min\{\varrho(\hf),\varrho(\hg)\}>\frac{2}{3}\left(1-\frac{k-2}{n-2}\right)$.
\end{thm}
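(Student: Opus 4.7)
The plan is to argue by contradiction. Assume $\hf,\hg$ satisfy the hypotheses of Theorem \ref{thm-main4} but, by symmetry, $\varrho(\hf)\leq \frac{2(n-k)}{3(n-2)}$. As an orienting computation, one checks that
\[
\varrho(\hht(n,k))=\frac{2\binom{n-3}{k-2}+\binom{n-3}{k-3}}{3\binom{n-3}{k-2}+\binom{n-3}{k-3}}=\frac{2n-k-2}{3n-2k-2}>\frac{2(n-k)}{3(n-2)},
\]
so the extremal candidate itself clears the claimed lower bound; the task is to rule out any cross-intersecting competitor of at least the same size that would fall short.

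First I would apply shifting ad extremis (as developed in Section 2) to the pair $(\hf,\hg)$ jointly. A simultaneous shift $S_{ij}$ preserves cross-intersection and both sizes, but it can increase $\varrho(\hf)$, which is the very obstruction singled out in the introduction. The remedy is to perform exhaustively only those shifts that keep the contradictory bound $\varrho(\hf)\leq \frac{2(n-k)}{3(n-2)}$ intact. In the resulting pair $(\hf^\ast,\hg^\ast)$, every not-performed shift $S_{ij}$ is obstructed either by one-sided $(i,j)$-closedness or by the $\varrho$-threshold; the latter obstruction yields the quantitative statement that some degree ratio $|\hf^\ast(\ell)|/|\hf^\ast|$ already nearly saturates the bound, and it pins down which edges are forced to contain $\ell$.

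Next I would extract structural information. Since $|\hg^\ast|\geq |\hht(n,k)|>\binom{n-2}{k-2}$, the family $\hg^\ast$ cannot be a 2-star, so after shifting one has $\tau(\hg^\ast)\geq 3$; likewise $\tau(\hf^\ast)\geq 3$. Combining cross-intersection with Proposition \ref{prop-1.2} applied to small prefixes of $[n]$, and feeding in the degree constraints from the first step, the aim is to show that the pair $(\hf^\ast,\hg^\ast)$ is concentrated on a triangle supported in $[3]$, i.e.\ most $F\in\hf^\ast$ satisfy $|F\cap[3]|\geq 2$. The hypothesis $n\geq 39k$ enters precisely to make $\binom{n-3}{k-3}$ subdominant to $\binom{n-3}{k-2}$ in the error terms of this concentration.

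Finally, picking $i\in[3]$ maximizing $|\hf^\ast(i)|$, the triangle concentration yields
\[
|\hf^\ast(i)|\geq 2\binom{n-3}{k-2}+\binom{n-3}{k-3}-E_1, \qquad |\hf^\ast|\leq 3\binom{n-3}{k-2}+\binom{n-3}{k-3}+E_2,
\]
with $E_1,E_2$ dwarfed by $\binom{n-3}{k-2}$ under $n\geq 39k$. The resulting ratio strictly exceeds $\frac{2(n-k)}{3(n-2)}$, contradicting $\varrho(\hf)=\varrho(\hf^\ast)\leq \frac{2(n-k)}{3(n-2)}$. The hardest step will be the second — converting the abstract "maximally shifted subject to the $\varrho$-bound" configuration into a rigid triangle-like kernel where the counting can be performed cleanly — and the constant $39$ in $n\geq 39k$ appears to be exactly what is needed to carry the error budget through that conversion.
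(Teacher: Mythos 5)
Your plan has a genuine gap at its center, and moreover the quantitative budget it relies on cannot work. The entire weight of your argument rests on the unproved ``concentration on a triangle'' step, and the tools you cite do not deliver it at the precision you need. To beat the threshold $\frac{2}{3}\left(1-\frac{k-2}{n-2}\right)$ by comparing $|\hf^\ast(i)|\geq 2\binom{n-3}{k-2}+\binom{n-3}{k-3}-E_1$ with $|\hf^\ast|\leq 3\binom{n-3}{k-2}+\binom{n-3}{k-3}+E_2$, a short computation (writing $b=\binom{n-3}{k-3}$, $a=\binom{n-3}{k-2}$, $b/a=\frac{k-2}{n-k}$) shows you need $E_1+\tfrac{2}{3}E_2$ to be at most roughly $2\binom{n-3}{k-3}$: the gap between the ideal ratio $\frac{2a+b}{3a+b}$ and the target is only of order $\frac{k}{n}$. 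But the error terms that shifting ad extremis actually produces in this regime are of order tens of $\binom{n-3}{k-3}$ (in the paper's own Lemma 3.3 they are $\approx 39\binom{n-3}{k-3}$, i.e.\ comparable to $\binom{n-3}{k-2}$ itself when $n=39k$). So even if you could establish approximate triangle structure by the sketched route, the error budget is off by an order of magnitude; the sharp constant cannot come out of an approximate counting argument at $n\geq 39k$. There is also a smaller slip: from $|\hg^\ast|>\binom{n-2}{k-2}$ you conclude $\tau(\hg^\ast)\geq 3$, but not being a $2$-star only says no pair is contained in every edge, which is unrelated to the covering number (a full star has $\tau=1$ and is not a $2$-star).

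The paper avoids all of this by a two-stage argument you may want to compare with. Stage one (Lemma 3.3, the shifting-ad-extremis part) only proves the crude statement $\max\{\varrho(\hf),\varrho(\hg)\}>\frac12$, where the large error terms are harmless. Stage two is exact, with no error terms: if, say, $|\hf(1)|>\frac12|\hf|\geq\frac32\binom{n-2}{k-2}-\binom{n-3}{k-3}>\binom{n-2}{k-2}+\binom{n-4}{k-3}+\binom{n-5}{k-3}$, then Hilton's Lemma (Kruskal--Katona) applied to the cross-intersecting pair $\hf(1)$, $\hg(\bar 1)$ forces $|\hg(\bar 1)|\leq\binom{n-3}{k-2}+\binom{n-5}{k-3}<\binom{n-2}{k-2}$, whence $|\hg(1)|\geq|\hg|-\binom{n-2}{k-2}$ and, since $|\hg|\geq|\hht(n,k)|=3\binom{n-3}{k-2}+\binom{n-3}{k-3}$, the ratio $|\hg(1)|/|\hg|$ already exceeds $\frac{2}{3}\left(1-\frac{k-2}{n-2}\right)$; then the same argument is run back from $\hg$ to $\hf$. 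In other words, the sharp constant is obtained by bootstrapping an exact lexicographic-compression bound from the weak ``degree above one half'' statement, not by pushing the shifting machinery all the way to a stability/concentration theorem. If you want to salvage your outline, you would have to replace the concentration step by exactly this kind of exact Kruskal--Katona step, at which point you have rederived the paper's proof.
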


For $\min\{|\hf|,|\hg|\}=\Omega\left(\binom{n-3}{k-3}\right)$, we prove the following result.

\begin{thm}\label{thm-main2}
Suppose that $0<\varepsilon\leq \frac{1}{58}$, $\hf,\hg\subset \binom{[n]}{k}$ are cross-intersecting with $|\hf|,|\hg|\geq \frac{1}{\varepsilon}\binom{n-3}{k-3}$. Then $\max\{\varrho(\hf),\varrho(\hg)\}>\frac{1}{2}-\varepsilon$.
\end{thm}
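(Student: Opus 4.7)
The plan is to argue by contradiction and to combine shifting ad extremis with a cascade of cross-intersection upgrades in the spirit of Proposition~\ref{prop-1.2}. Suppose $\varrho(\hf),\varrho(\hg)\le\tfrac12-\varepsilon$. I first apply shifting ad extremis to the pair $(\hf,\hg)$: compatible shifts $S_{ij}$ are performed on both families simultaneously, which preserves the cross-intersecting property and the cardinalities $|\hf|,|\hg|$, but any shift that would raise $\varrho(\hf)$ or $\varrho(\hg)$ past $\tfrac12-\varepsilon$ is withheld. After termination (and relabelling) $1$ is a maximum-degree vertex of both families, and the pair enjoys enough $S_{1j}$-invariance for a cross-intersecting analogue of Proposition~\ref{prop-1.2} to apply.

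From $|\hf(1)|/|\hf|\le\tfrac12-\varepsilon$ I obtain
\[
|\hf(\bar 1)|\ \ge\ \bigl(\tfrac12+\varepsilon\bigr)|\hf|\ \ge\ \frac{1/2+\varepsilon}{\varepsilon}\binom{n-3}{k-3},
\]
and symmetrically for $|\hg(\bar 1)|$. The standard upgrade trick --- if $F\in\hf(\bar 1)$ and $G\in\hg(\bar 1)$ met only in a single element $j\ge 2$, then the $S_{1j}$-image $F\setminus\{j\}\cup\{1\}$ would lie in $\hf$ and be disjoint from $G$, contradicting cross-intersection --- promotes the pair $(\hf(\bar 1),\hg(\bar 1))$ to \emph{cross-$2$-intersecting} in $\binom{[2,n]}{k}$.

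I then iterate: the ad extremis procedure is arranged so that $2$ (and then $3$) is the next heaviest vertex, and the $\varrho$-cap propagates to bound $|\hf(\bar 1)(2)|/|\hf(\bar 1)|$ and next $|\hf(\overline{\{1,2\}})(3)|/|\hf(\overline{\{1,2\}})|$. Two further rounds of the upgrade produce a cross-$3$- and then a cross-$4$-intersecting pair of $k$-subsets of $[4,n]$, each still of size $\Omega(\binom{n-3}{k-3}/\varepsilon)$ after accounting for the three multiplicative losses. A standard bound for cross-$t$-intersecting $k$-families (both sides at most $\binom{n-3-t}{k-t}$, up to Hilton--Milner-type sharpening once the trivial star case is excluded by the iterated $\varrho$-cap) forces the common cardinality to be substantially smaller than $\binom{n-3}{k-3}/\varepsilon$. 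Comparing lower and upper bounds yields the contradiction precisely when $\varepsilon\le 1/58$.

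The principal technical obstacle is the joint ad extremis step: one must design a shifting rule that simultaneously (i) preserves cross-intersection, (ii) keeps $\varrho(\hf)$ and $\varrho(\hg)$ below $\tfrac12-\varepsilon$, and (iii) generates the layers of $S_{1j}$-, $S_{2j}$-, $S_{3j}$-invariance used by the three cascading Proposition~\ref{prop-1.2}-style upgrades. A secondary difficulty is the bookkeeping of the additive corrections introduced by the partial shifts and of the loss in each upgrade (roughly a factor $\tfrac12+\varepsilon$ from the degree split plus lower-order terms); requiring the resulting three-fold loss to still exceed the final cross-$4$-intersecting ceiling is what ultimately pins $\varepsilon$ at $\le 1/58$.
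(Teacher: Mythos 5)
Your proposal has a genuine gap at its central step. You assert that after shifting ad extremis (with the cap $\varrho\leq\tfrac12-\varepsilon$ withheld as a constraint) the pair ``enjoys enough $S_{1j}$-invariance'' at a maximum-degree vertex for the Proposition~\ref{prop-1.2}-style upgrade to make $\hf(\bar 1),\hg(\bar 1)$ cross $2$-intersecting, and then that the procedure can be ``arranged'' so that $2$ and $3$ are the next heaviest vertices with the cap propagating, giving cross $3$- and cross $4$-intersecting layers. None of this is available: the whole point of shifting ad extremis is that the shifts $S_{1j}$ which would destroy the property $\varrho\leq\tfrac12-\varepsilon$ are \emph{not} performed, so shift-resistant pairs touching the vertex $1$ may survive, and the exchange argument ($F\setminus\{j\}\cup\{1\}\in\hf$) fails exactly at those pairs. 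This is precisely the difficulty the paper emphasizes (one cannot make a general family initial without possibly increasing $\varrho$), and your plan records it as ``the principal technical obstacle'' without resolving it. Likewise, controlling which vertices are heaviest while simultaneously preserving the cap and generating $S_{2j}$-, $S_{3j}$-invariance is not something the shifting process can be made to do, and the claim that the residual families keep size $\Omega\bigl(\tfrac1\varepsilon\binom{n-3}{k-3}\bigr)$ through three upgrades is unsupported; consequently the derivation of the threshold $\varepsilon\leq\tfrac1{58}$ from a ``cross-$4$-intersecting ceiling'' is an assertion, not a computation.

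For contrast, the paper's proof never cascades beyond cross $2$-intersecting. It first bounds $|\hf(P)|,|\hg(P)|<2\binom{n-3}{k-3}$ for every pair $P$ via the Hilton-type Lemma~\ref{lem-1}, then shows the graphs $\mathds{H}_1,\mathds{H}_2$ of shift-resistant pairs have matching number at most $2$, so both families are initial on $[n-8]$. If no shift-resistant pair exists at all, both families are initial and Proposition~\ref{prop-2.2} (Katona's intersecting shadow theorem applied to the cross $2$-intersecting pair $\hf(\bar 1),\hg(\bar 1)$, via pseudo-intersecting and \eqref{ineq-walk}) finishes immediately. Otherwise one fixes a shift-resistant pair, say $(n-1,n)\in\mathds{H}_1$, builds $\ha\subset\binom{[n-8]}{k-1}$ from the links of $n-1$ and $n$, lower-bounds $|\ha(\bar1,\bar2)|$ using Lemma~\ref{lem-3.5}, and then plays Lemmas~\ref{lem3.7} and~\ref{lem3.8} against each other to bound $|\hg(n-1,[n-7,n])|$ above and below by $\binom{n-3}{k-3}$, a contradiction; the constant $\tfrac1{58}$ comes out of this explicit bookkeeping. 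To repair your proposal you would need, at minimum, an argument handling the shift-resistant pairs themselves, which is where all the real work in the paper lies.
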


For $t$-intersecting families, we obtain the following result.

\begin{thm}\label{thm-main5}
Let $\hf\subset \binom{[n]}{k}$ be a $t$-intersecting family with $t\geq 2$. If $|\hf|> (t+1)\binom{n-1}{k-t-1}$ and $n\geq 2t(t+2)k$,
then $\varrho(\hf)> \frac{t}{t+1}$.
\end{thm}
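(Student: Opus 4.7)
Suppose for contradiction that $\varrho(\hf)\leq \frac{t}{t+1}$. This is equivalent to the statement that $|\hf(\bar i)|\geq \frac{|\hf|}{t+1}$ for every $i\in[n]$. The basic idea is to promote the $t$-intersecting property of $\hf$ to a $(t+1)$-intersecting property on a large subfamily, and then invoke the exact Erd\H{o}s--Ko--Rado theorem for $(t+1)$-intersecting families, which is applicable because the assumption $n\geq 2t(t+2)k$ is comfortably above the EKR threshold $n_0(k,t+1)=(k-t)(t+2)$ for $k$-subsets of an $(n-1)$-element set.

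The first step is to reduce to a family $\hf^*$ with the following properties: $|\hf^*|=|\hf|$, $\hf^*$ is $t$-intersecting, the lower bound $|\hf^*(\bar 1)|\geq |\hf|/(t+1)$ still holds, and $\hf^*$ is 1-shifted, that is, $S_{1j}(\hf^*)=\hf^*$ for every $j>1$. This is precisely where the \emph{shifting ad extremis} method of Section~2 is invoked: rather than shifting blindly (which can push $|\hf(1)|$ above the threshold $\frac{t}{t+1}|\hf|$ and destroy the working hypothesis), one performs a carefully chosen sequence of $S_{ij}$-operations that keeps the degrees of all vertices under control. Once 1-shiftedness is achieved, the argument in the proof of Proposition~\ref{prop-1.2} (specialised to $s=1$) applies: for any two $F,F'\in \hf^*(\bar 1)$ with $|F\cap F'|=t$, picking any $e\in F\cap F'$ and forming $F'':=(F'\setminus\{e\})\cup\{1\}$ gives $F''\in \hf^*$ by 1-shiftedness, but $|F\cap F''|=t-1$, a contradiction. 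Hence $\hf^*(\bar 1)$, regarded as a family in $\binom{[2,n]}{k}$, is $(t+1)$-intersecting.

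Now apply the exact EKR theorem to $\hf^*(\bar 1)$. Since $n-1\geq 2t(t+2)k-1\geq (k-t)(t+2)$, the $(t+1)$-intersecting family $\hf^*(\bar 1)\subset \binom{[2,n]}{k}$ satisfies
\[
|\hf^*(\bar 1)|\leq \binom{n-t-2}{k-t-1}.
\]
Combining with $|\hf^*(\bar 1)|\geq |\hf|/(t+1)$ yields
\[
\frac{|\hf|}{t+1}\leq \binom{n-t-2}{k-t-1}<\binom{n-1}{k-t-1},
\]
so $|\hf|<(t+1)\binom{n-1}{k-t-1}$, contradicting the hypothesis $|\hf|>(t+1)\binom{n-1}{k-t-1}$.

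The main obstacle is the first step. Ordinary shifting does not preserve the hypothesis $\varrho(\hf)\leq \frac{t}{t+1}$: each operation $S_{1j}$ can increase $|\hf(1)|$, so fully shifting may inflate $\varrho$ above the target and invalidate the reduction. The shifting ad extremis technique must therefore be executed in a way that either terminates with $\hf^*$ already 1-shifted, or, if continuing would overshoot, produces an "almost 1-shifted" family for which the residual defect is small enough that the slack between $\binom{n-t-2}{k-t-1}$ and $\binom{n-1}{k-t-1}$ still suffices to derive a contradiction. Making this precise, and verifying that the $t$-intersecting property together with the lower bound on $|\hf^*(\bar 1)|$ survives the reduction, is the technical core of the argument.
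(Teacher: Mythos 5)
Your argument is correct only conditionally on the first step, and that first step is exactly the part that cannot be taken for granted: it is where the paper's entire proof lives, and your proposal explicitly defers it (``making this precise \ldots is the technical core''). Shifting ad extremis with respect to the property $\varrho(\hf)\leq \frac{t}{t+1}$ does \emph{not} produce a $1$-shifted family in general; it terminates as soon as every further $S_{ij}$ would violate the degree condition, and at that point shift-resistant pairs (possibly involving the vertex you want to shift onto) may remain. There is no a priori sense in which the resulting family is ``almost $1$-shifted'' with a quantifiably small defect, so your reduction to a family $\hf^*$ that is simultaneously $1$-shifted, $t$-intersecting and satisfies $|\hf^*(\bar 1)|\geq |\hf|/(t+1)$ is unproved, and the subsequent EKR step has nothing to apply to. The paper handles precisely this residual situation, and it takes most of the work: one first bounds all pair degrees $|\hf(P)|$ via \eqref{ineq-4.3}, which for $t\geq 2$ requires the new cross $t$-intersecting inequality of Proposition \ref{thm-4.3} because Hilton's Lemma is unavailable; one then shows the graph $\mathds{H}$ of shift-resistant pairs has matching number at most one; the initial case is settled by Lemma \ref{lem-4.6}; and in the remaining case (a single shift-resistant pair, say $(n-1,n)$, with $\hf$ initial on $[n-2]$) a contradiction is extracted by showing, via Proposition \ref{fk2021} and \eqref{ineq-walk}, that $\hf(\overline{n-1},n)$ and $\hf(n-1,\overline{n})$ are both larger than $\binom{n-3}{k-t-2}$, which is impossible for cross $(t+1)$-intersecting families. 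None of this machinery is present or replaced in your proposal.

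On the positive side, the conditional part of your argument is sound and even slightly slicker than the paper's treatment of the initial case: if the shifted family really were $1$-shifted, then Proposition \ref{prop-1.2} with $s=1$ makes $\hf(\bar 1)$ a $(t+1)$-intersecting subfamily of $\binom{[2,n]}{k}$, and since $n-1\geq (k-t)(t+2)$ the exact EKR bound gives $|\hf(\bar 1)|\leq \binom{n-t-2}{k-t-1}$, contradicting $|\hf(\bar 1)|\geq |\hf|/(t+1)>\binom{n-1}{k-t-1}$; for families of the size assumed in the theorem this is indeed simpler than invoking Lemma \ref{lem-4.6}. But as a proof of the theorem the proposal has a genuine gap: the non-initial (shift-resistant) case is the heart of the matter and is left entirely open.
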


Let us present some results that are needed in our proofs.  Define the {\it lexicographic order} $A <_{L} B$ for $A,B\in \binom{[n]}{k}$ by $A<_L B$ iff $\min\{i\colon i\in A\setminus B\}<\min\{i\colon i\in B\setminus A\}$. E.g., $(1,2,9)<_L(1,3,4)$. For $n>k>0$ and $\binom{n}{k}\geq m>0$ let $\hl(n,k,m)$ denote the first $m$ sets $A\in \binom{[n]}{k}$ in the lexicographic order. For  $X\subset [n]$ with $|X|>k>0$ and $\binom{|X|}{k}\geq m>0$, we also use  $\hl(X,k,m)$ to denote the first $m$ sets $A\in \binom{X}{k}$ in the lexicographic order.

A powerful tool is the  Kruskal-Katona Theorem (\cite{Kruskal,Katona66}), especially its reformulation due to Hilton \cite{Hilton}.

{\noindent\bf Hilton's Lemma (\cite{Hilton}).} Let $n,a,b$ be positive integers, $n\geq a+b$. Suppose that $\ha\subset \binom{[n]}{a}$ and $\hb\subset \binom{[n]}{b}$ are cross-intersecting. Then $\hl(n,a,|\ha|)$ and $\hl(n,b,|\hb|)$ are cross-intersecting as well.

For $\hf\subset \binom{[n]}{k}$ define the $\ell$th shadow of $\hf$,
\[
\partial^{\ell}\hf =\left\{G\colon |G|=k-\ell, \exists F\in \hf, G\subset F\right\}.
\]
For $\ell=1$ we often omit the superscript.

The Katona Intersecting Shadow Theorem gives an inequality concerning the sizes of  a $t$-intersecting family and its shadow.

\vspace{6pt}
{\noindent\bf Katona Intersecting Shadow Theorem (\cite{Katona}).} Suppose that $n\geq 2k-t$, $t\geq \ell \geq 1$. Let $\emptyset \neq \ha \subset \binom{[n]}{k}$ be a $t$-intersecting family. Then
\begin{align}\label{ineq-ishadow}
|\partial^{\ell} \ha| \geq |\ha| \frac{\binom{2k-t}{k-\ell}}{\binom{2k-t}{k}}
\end{align}
with equality holding if and only if $\hf$ is isomorphic to $\binom{[2k-t]}{k}$.
\vspace{6pt}

 A generalized version of the Katona Intersecting Shadow Theorem was proved in \cite{F76}.

\begin{thm}[\cite{F76}]
Let $k_1,k_2,\ell_1,\ell_2, t$ be integers satisfying $1\leq \ell_1<k_1$ and $1\leq \ell_2<k_2$. Let $\emptyset\neq \ha \subset \binom{[n]}{k_1}$, $\emptyset\neq \hb \subset \binom{[n]}{k_2}$ and $\ha,\hb$ are cross $t$-intersecting. Then
\begin{align}\label{ineq-gshadow}
\mbox{either }|\partial^{\ell_1} \ha| \geq |\ha|\frac{\binom{2k_1-t}{k_1-\ell_1}}{\binom{2k_1-t}{k_1}} \mbox{ or }|\partial^{\ell_2} \hb| \geq |\hb|\frac{\binom{2k_2-t}{k_2-\ell_2}}{\binom{2k_2-t}{k_2}}.
\end{align}
\end{thm}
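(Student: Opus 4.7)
The plan combines simultaneous shifting with a bipartite extension of Katona's cyclic-permutation (cycle) method, reducing the global either/or to a pointwise dichotomy on cycles.

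\emph{Step 1 (Shifting).} Apply the compressions $S_{ij}$ for all $1\leq i<j\leq n$ simultaneously to $\ha$ and $\hb$. It is standard that simultaneous shifting preserves the cross $t$-intersection property (if $A,B$ cross $t$-intersect, so do their $S_{ij}$-images within their respective families), and the shadow sizes $|\partial^{\ell_1}\ha|$, $|\partial^{\ell_2}\hb|$ cannot grow. I may therefore assume both $\ha$ and $\hb$ are shifted.

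\emph{Step 2 (Katona dichotomy).} If $\ha$ is $t$-intersecting as a family on its own, the single-family Katona Intersecting Shadow Theorem applied to $\ha$ yields the first inequality of \eqref{ineq-gshadow} directly; symmetrically for $\hb$. So assume neither family is $t$-intersecting. Shiftedness then forces canonical near-disjoint pairs into each family: there exist $(A_1,A_2)\in\ha^2$ and $(B_1,B_2)\in\hb^2$ with $|A_1\cap A_2|<t$ and $|B_1\cap B_2|<t$, together with specific initial representatives of these pairs.

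\emph{Step 3 (Bipartite cycle argument).} In the remaining case, consider cyclic permutations $\pi$ on $m$-subsets $X\subset[n]$, where $m$ is tuned near $\max(2k_1,2k_2)-t$. For each $\pi$, let $a(\pi)$ (resp.~$b(\pi)$) be the number of length-$k_1$ arcs (resp.~length-$k_2$ arcs) of $\pi$ that lie in $\ha$ (resp.~$\hb$). Any $k_1$-arc in $\ha$ and any $k_2$-arc in $\hb$ sitting on the same cycle must share at least $t$ elements by cross $t$-intersection, a positional constraint that localises the joint arc-counts. A rotation/averaging argument extending Katona's single-family rotation yields, for every $\pi$, a pointwise dichotomy: either $a(\pi)/\binom{m}{k_1}$ or $b(\pi)/\binom{m}{k_2}$ lies at or below the corresponding Katona ratio $\binom{2k_i-t}{k_i-\ell_i}/\binom{2k_i-t}{k_i}$. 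Averaging $a(\pi)$, $b(\pi)$ and their $\ell_i$-shadow arc counterparts over all choices of $X$ and $\pi$ (via the standard arc-to-shadow correspondence that relates expected arc counts to $|\ha|, |\hb|$ and $|\partial^{\ell_i}\cdot|$) upgrades this pointwise dichotomy to the global either/or of \eqref{ineq-gshadow}.

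The principal obstacle is the joint cycle lemma at the heart of Step 3. Katona's original rotation argument was designed for a single family of one arc length, where the $t$-intersection constraint limits how many length-$k$ arcs can appear on a cycle of length $\geq 2k-t$. Extending this to two arc lengths $k_1, k_2$ under cross $t$-intersection requires a careful case split on the relative placement of $k_1$-arcs and $k_2$-arcs, and, crucially, one must obtain a \emph{pointwise} dichotomy (at least one of the two ratios is below the Katona threshold) rather than a merely combined weighted bound. This is the combinatorial crux and the place where the shifted structure from Steps~1--2 must be invoked to rule out pathological cyclic configurations.
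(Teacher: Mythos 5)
Your Steps 1--2 are fine (simultaneous shifting preserves cross $t$-intersection and does not increase $|\partial^{\ell_i}\cdot|$, since $\partial S_{ij}(\hh)\subset S_{ij}(\partial \hh)$), but the entire weight of the argument rests on Step 3, and Step 3 has two genuine problems. First, the ``joint cycle lemma'' is only asserted: Katona's intersecting shadow theorem is not a cyclic-permutation result (the cycle method is his proof of EKR, not of the shadow theorem), and no argument is given that on a single cycle either the $k_1$-arc density of $\ha$ or the $k_2$-arc density of $\hb$ falls below the corresponding Katona ratio; likewise the claimed ``arc-to-shadow correspondence'' relating averaged arc counts to $|\partial^{\ell_i}\ha|,|\partial^{\ell_i}\hb|$ is not a standard device and is not supplied. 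Second, even granting the pointwise dichotomy, the aggregation step is logically invalid: ``for every $\pi$, either $a(\pi)$ is small or $b(\pi)$ is small'' does not imply ``either the average of $a$ is small or the average of $b$ is small,'' because the disjunction may switch sides from cycle to cycle. Since the conclusion \eqref{ineq-gshadow} is exactly such a global either/or, this gap is fatal as written.

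For comparison, the statement is quoted from \cite{F76} and the present paper does not reprove it, but the tools it does quote give the intended short route, quite different from yours: after shifting, Proposition \ref{fk2021} says that either both $\ha$ and $\hb$ are pseudo $t$-intersecting, or one of them is pseudo $(t+1)$-intersecting; and by \cite{F91} the Katona shadow inequality \eqref{ineq-ishadow} holds for pseudo $t$-intersecting families (a pseudo $(t+1)$-intersecting family satisfies an even stronger bound, which implies the one with parameter $t$). In either alternative at least one of the two families satisfies its Katona-type shadow bound, which is precisely \eqref{ineq-gshadow}. If you want to salvage your approach, you would need to replace the cycle argument by this pseudo-intersecting dichotomy (or prove the per-cycle lemma and then find a way to keep the dichotomy on one fixed side globally, which is exactly what the pseudo-intersecting structure accomplishes).
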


We need a notion called pseudo $t$-intersecting, which was introduced in \cite{FK2021}.
A family $\hf\subset \binom{[n]}{k}$ is said to be {\it pseudo $t$-intersecting} if for every  $F\in \hf$ there exists $0\leq \ell\leq k-t$ such that $|F\cap [2\ell+t]|\geq \ell+t$. It is proved in \cite{F91} that  \eqref{ineq-ishadow} holds for pseudo $t$-intersecting families as well.

\begin{thm}[\cite{F78}]
Let $\hf\subset \binom{[n]}{k}$ be an initial family with $0\leq t<k$. If  $\hf$ is pseudo $t$-intersecting, then
\begin{align}\label{ineq-walk}
|\hf| \leq \binom{n}{k-t}.
\end{align}
\end{thm}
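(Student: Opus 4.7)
My plan is to construct an explicit reflection-principle injection
\[
\phi\colon \{F\in\binom{[n]}{k}:F\text{ is pseudo }t\text{-intersecting}\}\longrightarrow \binom{[n]}{k-t};
\]
since every $F\in\hf$ is pseudo $t$-intersecting, this at once yields $|\hf|\le\binom{n}{k-t}$. It is worth noting in advance that the \emph{initial} hypothesis will play no role; the bound actually holds for arbitrary pseudo $t$-intersecting families.

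First I encode each $F\in\binom{[n]}{k}$ as the $\pm1$-walk $s_F\colon\{0,1,\ldots,n\}\to\mathbb{Z}$ with $s_F(0)=0$ and $s_F(i)-s_F(i-1)=+1$ if $i\in F$ and $-1$ otherwise; equivalently $s_F(i)=2|F\cap[i]|-i$. The condition $|F\cap[2\ell+t]|\ge\ell+t$ becomes $s_F(2\ell+t)\ge t$. Because $s_F$ starts at $0$ and has unit steps, the existence of some $\ell\in\{0,\ldots,k-t\}$ with $s_F(2\ell+t)\ge t$ is equivalent to $\max_i s_F(i)\ge t$, which is in turn equivalent to the existence of a smallest index $\tau=\tau(F)$ with $s_F(\tau)=t$ (a parity check gives $\tau\equiv t\pmod{2}$ and $t\le\tau\le 2k-t$). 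Thus being pseudo $t$-intersecting means precisely that $\tau(F)$ is defined.

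For such an $F$ I define $\phi(F)=F\triangle[\tau(F)]$ (symmetric difference); the walk $s_{\phi(F)}$ is obtained from $s_F$ by reflecting the segment $[0,\tau]$ about the horizontal axis. From $s_F(\tau)=t$ one has $|F\cap[\tau]|=(\tau+t)/2$, and therefore
\[
|\phi(F)|=(\tau-t)/2+\bigl(k-(\tau+t)/2\bigr)=k-t,
\]
so $\phi(F)\in\binom{[n]}{k-t}$ as required.

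The step that needs care is the injectivity of $\phi$. Setting $G=\phi(F)$, one has $s_G(i)=-s_F(i)$ for $i\le\tau$ and $s_G(i)=s_F(i)-2t$ for $i>\tau$. In particular $s_G(\tau)=-t$, while the minimality of $\tau$ forces $s_F(i)<t$, and hence $s_G(i)>-t$, for every $i<\tau$. Thus $\tau$ is uniquely recovered from $G$ as the smallest index at which $s_G$ attains $-t$, and then $F=G\triangle[\tau]$ is recovered. Hence $\phi$ is an injection into $\binom{[n]}{k-t}$, giving $|\hf|\le|\phi(\hf)|\le\binom{n}{k-t}$ and completing the proof.
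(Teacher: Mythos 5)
Your injection is correct, and it is worth noting that the paper itself gives no proof of this statement: it is quoted from \cite{F78}, so there is nothing internal to compare against. Your reflection argument is essentially the classical walk proof behind the cited result: encoding $F$ by the walk $s_F(i)=2|F\cap[i]|-i$, the pseudo $t$-intersecting condition says the walk reaches level $t$, and reflecting the initial segment up to the first hitting time $\tau$ gives a set of size $k-t$; recovering $\tau$ from the image as the first hitting time of $-t$ makes the map injective, exactly as you argue. Your observation that initiality is superfluous is also right, since pseudo $t$-intersecting is a condition imposed set by set (initiality only matters in applications, where one uses it to show that an initial $t$-intersecting family is pseudo $t$-intersecting). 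One microscopic point: if the witnessing index satisfies $2\ell+t>n$ (possible only when $n<2k-t$), your equivalence with ``the walk attains $t$'' needs the extra remark that then $s_F(n)=2k-n>t$, so the hitting time $\tau\le n$ still exists; with that noted, the proof is complete.
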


\begin{prop}[\cite{FK2021}]\label{fk2021}
If $\hf\subset \binom{[n]}{k},\hg\subset \binom{[n]}{\ell}$ are cross $t$-intersecting, then either both $\hf$ and $\hg$ are pseudo $t$-intersecting, or one of them is pseudo $(t+1)$-intersecting.
\end{prop}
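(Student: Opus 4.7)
The plan is to argue by contradiction, producing specific extremal sets $F_0\in\hf$ and $G_0\in\hg$ whose intersection is too small. Throughout I assume $\hf$ and $\hg$ are initial under the shifting order $\prec$; this is the standing setup in which pseudo-intersecting conditions are used in this paper, and it is what makes the ``lex-minimal bad set'' argument below go through.

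The first step is to characterize failure of pseudo-$t$-intersecting in a shift-compatible way. For $F=(f_1,\ldots,f_a)$ the inequality $|F\cap[2\ell+t]|\geq\ell+t$ is equivalent to $f_{t+\ell}\leq 2\ell+t$. Hence $F$ fails the condition for every admissible $\ell$ iff $f_{t+\ell}\geq 2\ell+t+1$ for all $0\leq\ell\leq a-t$, i.e.\ iff $F_0\prec F$, where
\[
F_0:=\{1,2,\ldots,t-1\}\cup\{t+1,t+3,\ldots,2a-t+1\}.
\]
Since $\hf$ is initial, $\hf$ is not pseudo-$t$-intersecting iff $F_0\in\hf$. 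The analogous extremal witness for failure of pseudo-$(t+1)$-intersecting in $\hg\subset\binom{[n]}{b}$ is
\[
G_0:=\{1,2,\ldots,t\}\cup\{t+2,t+4,\ldots,2b-t\},
\]
and $\hg$ is not pseudo-$(t+1)$-intersecting iff $G_0\in\hg$.

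Next, suppose for contradiction that the conclusion fails. Then at least one of $\hf,\hg$ is not pseudo-$t$-intersecting, and neither is pseudo-$(t+1)$-intersecting. By the symmetry of the statement, we may assume $\hf$ is not pseudo-$t$-intersecting; combined with the other hypothesis, this forces $\hg$ to be not pseudo-$(t+1)$-intersecting. Thus $F_0\in\hf$ and $G_0\in\hg$. A direct calculation gives $F_0\cap G_0=\{1,\ldots,t-1\}$: the two sets share the segment $[t-1]$; the element $t$ lies in $G_0$ but not $F_0$; and the tails $\{t+1,t+3,\ldots\}\subset F_0$ and $\{t+2,t+4,\ldots\}\subset G_0$ are disjoint by parity. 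Hence $|F_0\cap G_0|=t-1<t$, contradicting the cross-$t$-intersection hypothesis.

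The main obstacle is really the bookkeeping: reading off $F_0$ and $G_0$ correctly from the quantified failure conditions and verifying the parity interleaving, both elementary but easy to slip on. A minor subtlety is the tacit reduction to initial families, which is unproblematic in the intended applications where shifting has already been applied.
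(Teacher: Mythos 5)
Your argument is sound, but only under the initiality assumption you impose at the start, and that assumption deserves a sharper status than the ``minor subtlety'' you give it. The paper itself offers no proof of Proposition \ref{fk2021} (it is quoted from \cite{FK2021}), so there is nothing to compare routes with; your mechanism --- identifying the $\prec$-minimal witnesses $F_0=\{1,\ldots,t-1\}\cup\{t+1,t+3,\ldots,2k-t+1\}$ and $G_0=\{1,\ldots,t\}\cup\{t+2,t+4,\ldots,2\ell-t\}$ and checking $|F_0\cap G_0|=t-1$ --- is exactly in the spirit of how the paper handles pseudo-intersecting families elsewhere (the sets $(1,2,4,6,\ldots,2k-2)$ versus $(3,5,\ldots,2k+1)$ near the end of Section 3, or the parity argument inside the proof of Proposition \ref{thm-4.3}), and your bookkeeping ($|F\cap[2\ell+t]|\geq \ell+t$ iff $f_{t+\ell}\leq 2\ell+t$, disjointness of the two tails by parity, the symmetry reduction) is correct. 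The point to fix is your framing of initiality as a ``tacit reduction, unproblematic where shifting has already been applied.'' It is not a reduction at all: the statement as literally written, with no initiality hypothesis, is false --- take $t=1$ and $\hf=\hg=\{\{2,100\}\}$, which are cross-intersecting yet neither pseudo $1$- nor pseudo $2$-intersecting --- and one cannot pass to the initial case by shifting, since pseudo $t$-intersection is defined relative to the fixed order and is not inherited back from $S_{ij}(\hf)$ to $\hf$. So initiality has to stand as an explicit hypothesis of the proposition (which is harmless for this paper: every invocation of Proposition \ref{fk2021} here is for families that are initial on the relevant ground set), and with that hypothesis stated your proof is complete.
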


The following inequalities for  cross $t$-intersecting families can be deduced from Proposition \ref{fk2021}.

\begin{cor}[\cite{F78}]\label{thm-tintersecting}
Suppose that $\ha,\hb\subset \binom{[n]}{k}$ are cross $t$-intersecting, $|\ha|\leq |\hb|$. Then either
\begin{align}
|\hb|\leq \binom{n}{k-t} \mbox{ \rm or }\label{ineq-tinterhb}\\[5pt]
|\ha| \leq \binom{n}{k-t-1}.\label{ineq-tinterha}
\end{align}
\end{cor}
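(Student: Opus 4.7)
The plan is to derive this corollary by combining Proposition \ref{fk2021} with the pseudo $t$-intersecting bound \eqref{ineq-walk}. Since both of those tools are stated for initial families, the first step is to pass from the given $\ha,\hb$ to their shifted counterparts. Applying the standard shifts $S_{ij}$ ($i<j$) to both families simultaneously preserves the cross $t$-intersecting property and leaves $|\ha|$ and $|\hb|$ unchanged, so after finitely many steps I may assume both $\ha$ and $\hb$ are initial, with the hypothesis $|\ha|\leq |\hb|$ still in force.

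Now invoke Proposition \ref{fk2021} on the shifted pair. It produces a dichotomy.

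Case A: both $\ha$ and $\hb$ are pseudo $t$-intersecting. Applying \eqref{ineq-walk} to the initial pseudo $t$-intersecting family $\hb$ yields $|\hb|\leq \binom{n}{k-t}$, which is precisely \eqref{ineq-tinterhb}.

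Case B: at least one of the two families is pseudo $(t+1)$-intersecting. Applying \eqref{ineq-walk} with $t$ replaced by $t+1$ to whichever family satisfies the stronger condition gives an upper bound of $\binom{n}{k-t-1}$ on its size. If the family in question is $\ha$, this is \eqref{ineq-tinterha} directly. If instead it is $\hb$, then the hypothesis $|\ha|\leq |\hb|$ gives
\[
|\ha|\leq |\hb|\leq \binom{n}{k-t-1},
\]
again yielding \eqref{ineq-tinterha}.

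The argument is essentially a two-line case analysis, so the only step requiring any care is the reduction to the initial case: one must check that a single shift $S_{ij}$ preserves cross $t$-intersection for $\ha,\hb$. This is entirely standard (one verifies it for pairs $F\in \ha$, $G\in \hb$ by considering whether $i,j$ lie in $F$ and $G$, and uses that if the shift of $F$ meets $G$ in fewer than $t$ elements then a replacement inside $\ha$ would already violate cross $t$-intersection before shifting), so it constitutes a routine rather than a genuine obstacle.
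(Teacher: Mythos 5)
Your argument is correct and is exactly the deduction the paper intends: it cites Proposition \ref{fk2021} together with the bound \eqref{ineq-walk} after a simultaneous shifting reduction to initial families (which preserves cross $t$-intersection and the sizes), and your case analysis, including using $|\ha|\leq|\hb|$ when $\hb$ is the pseudo $(t+1)$-intersecting one, matches that route.
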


The next two results are consequences of Hilton's Lemma.

\begin{lem}[\cite{FW2022}]\label{lem-1}
Let $\hf,\hg\subset \binom{[n]}{k}$ be cross-intersecting. Suppose that for some $\{x,y\}\in \binom{[n]}{2}$, $|\hf(x,y)|\geq \binom{n-3}{k-3}+\binom{n-4}{k-3}+\binom{n-6}{k-4}$. Then
\begin{align}
|\hg(\bar{x},\bar{y})|\leq \binom{n-5}{k-3}+\binom{n-6}{k-3}.\label{ineq-hgxy}
\end{align}
\end{lem}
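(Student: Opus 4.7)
The natural setup is to view $\hf(x,y)$ and $\hg(\bar x,\bar y)$ as families on the $(n-2)$-element ground set $[n]\setminus\{x,y\}$, which I identify with $[n-2]$ via the order-preserving bijection. The cross-intersecting hypothesis, applied to $F\in\hf$ with $\{x,y\}\subset F$ and $G\in\hg$ with $G\cap\{x,y\}=\emptyset$, yields $(F\setminus\{x,y\})\cap G\neq\emptyset$, so $\hf(x,y)\subset\binom{[n-2]}{k-2}$ and $\hg(\bar x,\bar y)\subset\binom{[n-2]}{k}$ are cross-intersecting on $[n-2]$. By Hilton's Lemma (applicable since $n\ge 2k$, i.e.\ $(n-2)\ge(k-2)+k$), the lex initial segments $\hl([n-2],k-2,|\hf(x,y)|)$ and $\hl([n-2],k,|\hg(\bar x,\bar y)|)$ are cross-intersecting as well, so it suffices to bound $|\hl([n-2],k,|\hg(\bar x,\bar y)|)|$ from above.

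Set $m:=\binom{n-3}{k-3}+\binom{n-4}{k-3}+\binom{n-6}{k-4}$. I would unpack $\hl([n-2],k-2,m)$ as a union of three explicit lex blocks: all $(k-2)$-subsets of $[n-2]$ containing $1$ (contributing $\binom{n-3}{k-3}$ sets), next the $(k-2)$-subsets containing $2$ but not $1$ (contributing $\binom{n-4}{k-3}$), and finally the $(k-2)$-subsets containing $\{3,4\}$ and missing $\{1,2\}$ (contributing $\binom{n-6}{k-4}$). This last block is precisely the first $\binom{n-6}{k-4}$ sets in lex order among those containing $3$ and missing $\{1,2\}$, which explains the somewhat unusual appearance of $\binom{n-6}{k-4}$ in the hypothesis. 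Since $|\hf(x,y)|\ge m$, Hilton's Lemma forces every set in $\hl([n-2],k,|\hg(\bar x,\bar y)|)$ to meet each of these three blocks.

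The heart of the argument is then to describe the $k$-subsets $G\subset[n-2]$ that meet all three blocks. If $1\notin G$, then $|[n-2]\setminus(\{1\}\cup G)|=n-k-3\ge k-3$, so one may pick a $(k-3)$-subset $X$ there and obtain $\{1\}\cup X$ in block~1 disjoint from $G$; symmetric arguments force $2\in G$ and $G\cap\{3,4\}\neq\emptyset$. Conversely, any $G$ with $\{1,2\}\subset G$ and $G\cap\{3,4\}\neq\emptyset$ trivially meets every set in each of the three blocks. Splitting the count by whether $3\in G$ gives exactly $\binom{n-5}{k-3}+\binom{n-6}{k-3}$ admissible $k$-subsets, and since these form a lex initial segment themselves, this is the maximum possible size of $\hl([n-2],k,|\hg(\bar x,\bar y)|)$, hence the required bound on $|\hg(\bar x,\bar y)|$.

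I do not expect any step to be a serious obstacle: all three ingredients (cross-intersection on the trace, Hilton's Lemma, and the explicit lex unpacking) are standard. The one place requiring a little care is matching the three-block description of $\hl([n-2],k-2,m)$ to the exact form of the hypothesis; as a sanity check, the Pascal identity $\binom{n-5}{k-3}+\binom{n-6}{k-3}=2\binom{n-5}{k-3}-\binom{n-6}{k-4}$ coincides with the inclusion-exclusion count $|\{G:\{1,2\}\subset G,\,G\cap\{3,4\}\neq\emptyset\}|$, which is reassuring.
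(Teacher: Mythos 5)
Your proposal is correct and is essentially the same argument as the source: the paper only quotes this lemma from \cite{FW2022}, but the route you take --- pass to the traces $\hf(x,y)\subset\binom{[n-2]}{k-2}$, $\hg(\bar x,\bar y)\subset\binom{[n-2]}{k}$, apply Hilton's Lemma, unpack $\hl([n-2],k-2,m)$ into the three lex blocks, and conclude that every set in $\hl([n-2],k,|\hg(\bar x,\bar y)|)$ must contain $\{1,2\}$ and meet $\{3,4\}$, giving $\binom{n-4}{k-2}-\binom{n-6}{k-2}=\binom{n-5}{k-3}+\binom{n-6}{k-3}$ --- is exactly the lex-segment technique the paper itself runs in the analogous computations inside the proofs of Theorem \ref{thm-main3} and Lemma \ref{lem-main3}. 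The only point worth stating explicitly is the implicit hypotheses $n\ge 2k$ and $k\ge 4$: you need them for Hilton's Lemma, for the disjoint-set constructions forcing $1,2\in G$ and $G\cap\{3,4\}\neq\emptyset$, and for the third block to be nonempty (for $k=3$ the stated bound can fail), but this is harmless in the regime where the lemma is invoked.
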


\begin{lem}[\cite{FW2022}]
Suppose that $\hf,\hg\subset \binom{[n]}{k}$ are cross-intersecting and for some $\{x,y\}\in \binom{[n]}{2}$, $|\hf(x,y)|\geq \binom{n-3}{k-3}+\binom{n-4}{k-3}+\binom{n-5}{k-3}+\binom{n-7}{k-4}$. Then
\begin{align}
|\hg(\bar{x},\bar{y})|\leq \binom{n-6}{k-4}+\binom{n-7}{k-4}.\label{ineq-hgxy2}
\end{align}
\end{lem}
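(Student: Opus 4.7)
The cross-intersecting hypothesis on $\hf,\hg$ restricts to the pair $(\hf(x,y),\hg(\bar x,\bar y))$: for any $F'\in\hf(x,y)$ and $G\in\hg(\bar x,\bar y)$, the set $F:=F'\cup\{x,y\}$ lies in $\hf$, and since $G$ avoids $\{x,y\}$, $F\cap G=F'\cap G\neq\emptyset$. So $\hf(x,y)\subset\binom{[n]\setminus\{x,y\}}{k-2}$ and $\hg(\bar x,\bar y)\subset\binom{[n]\setminus\{x,y\}}{k}$ are cross-intersecting on an $(n-2)$-element ground set. Identifying $[n]\setminus\{x,y\}$ with $[n-2]$ and invoking Hilton's Lemma, I would assume both families are lex-initial segments of their respective sizes; this leaves $|\hg(\bar x,\bar y)|$, the quantity to bound, unchanged.

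The lex decomposition on $\binom{[n-2]}{k-2}$ is transparent: the first $\binom{n-3}{k-3}+\binom{n-4}{k-3}+\binom{n-5}{k-3}$ sets in lex order are exactly those meeting $\{1,2,3\}$, and the next $\binom{n-7}{k-4}$ are exactly those disjoint from $\{1,2,3\}$ and containing $\{4,5\}$. The size hypothesis therefore forces all of these $(k-2)$-sets into $\hf(x,y)$.

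Next I force structure on a generic $G\in\hg(\bar x,\bar y)$. For each $i\in\{1,2,3\}$, $G$ must meet every $(k-2)$-set of the form $\{i\}\cup B$ with $B\in\binom{[n-2]\setminus\{i\}}{k-3}$; if $i\notin G$, this would require $G$ to hit every $(k-3)$-subset of an $(n-3)$-element set, which demands $(n-3)-k<k-3$, impossible when $n$ is large relative to $k$. Hence $\{1,2,3\}\subset G$. An analogous transversal argument applied to the sets $\{4,5\}\cup C$ with $C\in\binom{[n-2]\setminus[5]}{k-4}$, using the fact that $|G\cap([n-2]\setminus[5])|=k-3$, then forces $G\cap\{4,5\}\neq\emptyset$.

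Splitting by which of $4,5$ lies in $G$ yields at most $\binom{n-6}{k-4}$ sets with $\{1,2,3,4\}\subset G$ and at most $\binom{n-7}{k-4}$ sets with $\{1,2,3,5\}\subset G$ and $4\notin G$, giving the claimed bound. The only slightly delicate point is pinning down the exact lex-initial segment of size $\binom{n-3}{k-3}+\binom{n-4}{k-3}+\binom{n-5}{k-3}+\binom{n-7}{k-4}$ as the union of the two structured pieces described above; once this is established, the transversal computations have ample slack because $n$ is much larger than $k$.
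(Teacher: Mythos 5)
Your proof is correct and is exactly the intended route: the paper does not reprove this lemma but quotes it from \cite{FW2022} as a consequence of Hilton's Lemma, and your argument --- replace both $\hf(x,y)\subset\binom{[n]\setminus\{x,y\}}{k-2}$ and $\hg(\bar{x},\bar{y})\subset\binom{[n]\setminus\{x,y\}}{k}$ by lex segments, note the segment on the $(k-2)$-level contains all sets meeting $\{1,2,3\}$ plus all sets containing $\{4,5\}$ and avoiding $\{1,2,3\}$, then force $\{1,2,3\}\subset G$ and $G\cap\{4,5\}\neq\emptyset$ for every $G$ on the $k$-level and count $\binom{n-6}{k-4}+\binom{n-7}{k-4}$ --- is precisely that proof. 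The only point to make explicit is the condition $n\geq 2k$, which is needed both to apply Hilton's Lemma (here $(k-2)+k\leq n-2$) and in your two transversal steps; this is harmless since the lemma is only invoked in the paper for $n$ much larger than $2k$.
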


For $\hf\subset \binom{[n]}{k}$ and $P\in \binom{[n]}{2}$, define
\[
\hf_P=\{F\in \hf\colon F\cap P\neq\emptyset\}.
\]

\begin{lem}[\cite{FW2022}]\label{lem-3.5}
Let  $\hf\subset \binom{[n]}{k}$ be an intersecting family with $|\hf(P)|\leq M$ for every $P\in \binom{[n]}{2}$ and $M\geq 2\binom{n-5}{k-3}$. Suppose that $R,Q\in \binom{[n]}{2}$ and  $R\cap Q=\emptyset$. Then
\begin{align}\label{ineq-FQR-general}
|\hf_R\cap \hf_Q|\leq 3M+\binom{n-7}{k-5}+\binom{n-8}{k-5}.
\end{align}
\end{lem}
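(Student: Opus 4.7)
The plan is to apply inclusion--exclusion to the four pair-degree families $\hf(r_i,q_j)$ and then save one factor of $M$ by exploiting the intersecting property of $\hf$ together with Hilton's Lemma on an induced cross-intersecting structure. Write $R=\{r_1,r_2\}$ and $Q=\{q_1,q_2\}$. Every $F\in\hf_R\cap\hf_Q$ contains at least one pair $\{r_i,q_j\}$, so $\hf_R\cap\hf_Q=\bigcup_{i,j}\hf(r_i,q_j)$. The only pairwise intersections $\hf(r_i,q_j)\cap\hf(r_{i'},q_{j'})$ indexed by disjoint pairs reduce to $\hf(R\cup Q)$, and the higher-order intersections also reduce to $\hf(R\cup Q)$. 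Inclusion--exclusion therefore gives
\[
|\hf_R\cap\hf_Q|=\sum_{i,j}|\hf(r_i,q_j)|-|\hf(R)\cap\hf_Q|-|\hf(Q)\cap\hf_R|-|\hf(R\cup Q)|.
\]
The hypothesis $|\hf(P)|\le M$ already yields $\sum_{i,j}|\hf(r_i,q_j)|\le 4M$, so the remaining task is to extract an extra saving of $M-\binom{n-7}{k-5}-\binom{n-8}{k-5}$ from the three subtracted terms, possibly aided by the fine structure of the $\hf(r_i,q_j)$.

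To do so, set $U=[n]\setminus(R\cup Q)$ (of size $n-4$) and consider the minimal-intersection sub-families $Y_{ij}=\{F\in\hf:F\cap R=\{r_i\},\ F\cap Q=\{q_j\}\}$ together with their restrictions $Y_{ij}^*=\{F\setminus(R\cup Q):F\in Y_{ij}\}\subset\binom{U}{k-2}$. Because $\{r_1,q_1\}\cap\{r_2,q_2\}=\emptyset$ and $\{r_1,q_2\}\cap\{r_2,q_1\}=\emptyset$, the intersecting property of $\hf$ forces each of the pairs $(Y_{11}^*,Y_{22}^*)$ and $(Y_{12}^*,Y_{21}^*)$ to be cross-intersecting on $U$, and the pair-degree bound $M$ descends to each $Y_{ij}^*$. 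I then invoke Hilton's Lemma in this $(n-4,k-2)$ setting, using the input $M\ge 2\binom{n-5}{k-3}$, which is exactly twice the vertex-star size for $(k-2)$-uniform families on $n-4$ points. In the ``star regime'' both cross-intersecting families sit inside a common vertex-star of $U$, giving $\sum_{i,j}|Y_{ij}^*|\le 4\binom{n-5}{k-3}\le 2M$; in the complementary ``Hilton--Milner regime'' the lex-initial representatives push one member of each cross-intersecting pair below the Kruskal--Katona threshold $\binom{n-7}{k-5}+\binom{n-8}{k-5}$ that appears in Lemma \ref{lem-1} transplanted to $(n-4,k-2)$.

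\textbf{Main obstacle.} The technical core is combining the two savings channels---from the boundary sum $|\hf(R)\cap\hf_Q|+|\hf(Q)\cap\hf_R|+|\hf(R\cup Q)|$ on one side and from the Hilton analysis of the cross-intersecting $Y_{ij}^*$'s on the other---without double counting. I expect a case split: if the boundary sum is at least $M-\binom{n-7}{k-5}-\binom{n-8}{k-5}$, the inclusion--exclusion identity alone already yields the bound, while if it is smaller, one must extract the full $M-\binom{n-7}{k-5}-\binom{n-8}{k-5}$ saving from the $(Y_{11}^*,Y_{22}^*)$ and $(Y_{12}^*,Y_{21}^*)$ analysis. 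The calibration $M\ge 2\binom{n-5}{k-3}$ is precisely what makes the star regime fit within $M$ and the Hilton--Milner regime yield exactly the error $\binom{n-7}{k-5}+\binom{n-8}{k-5}$; coordinating the two cross-intersecting pairs which share the ground set $U$, and absorbing the $|\hf(R\cup Q)|$ correction consistently, is the delicate combinatorial matching on which the proof hinges.
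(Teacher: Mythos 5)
First, a caveat: this paper does not prove Lemma \ref{lem-3.5} at all --- it is quoted from \cite{FW2022} --- so your proposal can only be judged on its own terms. Structurally, your skeleton can be made to work: the counting identity is correct, the four families $Y_{ij}^*\subset\binom{U}{k-2}$ with the two cross-intersecting pairs $(Y_{11}^*,Y_{22}^*)$, $(Y_{12}^*,Y_{21}^*)$ are the right objects, and the case split you ``expect'' does close the arithmetic: if the boundary sum is at least $M-\binom{n-7}{k-5}-\binom{n-8}{k-5}$, the identity gives the bound; otherwise the number of $F$ with $|F\cap(R\cup Q)|\geq 3$ is below $M-\binom{n-7}{k-5}-\binom{n-8}{k-5}$, and it suffices to show that each cross-intersecting pair satisfies $|Y_{ij}^*|+|Y_{i'j'}^*|\leq M+\binom{n-7}{k-5}+\binom{n-8}{k-5}$, since $|Y_{ij}^*|\leq|\hf(r_i,q_j)|\leq M$ always.

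The genuine gap is that the one step carrying all the content is never formulated, let alone proved: you need a statement of the shape ``if $\ha,\hb\subset\binom{[n-4]}{k-2}$ are cross-intersecting and both exceed $\binom{n-7}{k-5}+\binom{n-8}{k-5}=\binom{m-3}{l-3}+\binom{m-4}{l-3}$ (with $m=n-4$, $l=k-2$), then $|\ha|+|\hb|\leq 2\binom{m-1}{l-1}=2\binom{n-5}{k-3}\leq M$,'' which is exactly where the two thresholds in the lemma come from, and which is a genuine Kruskal--Katona/lex computation in the spirit of Lemma \ref{lem-1} (one replaces $\ha,\hb$ by lexicographic segments, notes that the smaller one then contains all sets through $(1,2,3)$ or $(1,2,4)$ plus $(1,2,5,\ldots,l+2)$, determines the maximal partner family, and checks the sum over the whole range up to $\binom{m-2}{l-2}$, after which \eqref{F78} takes over). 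Your ``star regime / Hilton--Milner regime'' narrative does not supply this: Hilton's Lemma does not place two cross-intersecting families inside a common vertex star (false in general, e.g.\ $\ha=\hb=\{A\colon|A\cap\{1,2,3\}|\geq 2\}$), and the bound $\sum_{i,j}|Y_{ij}^*|\leq 2M$ you draw from it is useless on its own, since adding the $\leq 2M$ boundary contribution only returns the trivial $4M$. Since you explicitly defer the combination (``the delicate combinatorial matching on which the proof hinges'') and never state or prove the cross-intersecting sum inequality, what you have is a sound plan with its central lemma missing, not a proof.
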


We need the following inequalities concerning binomial coefficients.

\begin{prop}
Let $n,k,i$ be positive integers. Then
\begin{align} \label{ineq-key1}
&\binom{n-i}{k} \geq \frac{n-ik}{n}\binom{n}{k}, \mbox{\rm \ for } n>ik.
\end{align}
\end{prop}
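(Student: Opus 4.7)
The plan is to read the ratio probabilistically. Viewing $S$ as a uniformly random element of $\binom{[n]}{k}$, one has
\[
\frac{\binom{n-i}{k}}{\binom{n}{k}} = \Pr\bigl[S\cap [i] = \emptyset\bigr].
\]
For each fixed $j\in[i]$ a uniformly random $k$-subset contains $j$ with probability $k/n$, so Boole's inequality (the union bound) gives $\Pr[S\cap[i]\neq\emptyset]\leq ik/n$, and taking complements yields $\Pr[S\cap[i]=\emptyset]\geq (n-ik)/n$. Multiplying through by $\binom{n}{k}$ produces the desired \eqref{ineq-key1}. The hypothesis $n>ik$ is used only to guarantee that the lower bound $(n-ik)/n$ is positive and non-trivial; the inequality itself remains formally valid regardless.

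As a sanity check (and a purely combinatorial backup) the same bound can be obtained by induction on $i$. The base case $i=1$ is the identity $\binom{n-1}{k} = \frac{n-k}{n}\binom{n}{k}$. For the step from $i$ to $i+1$, use the absorption identity
\[
\binom{n-i-1}{k} = \frac{n-i-k}{n-i}\binom{n-i}{k},
\]
apply the inductive hypothesis, and it remains to verify the elementary inequality $(n-ik)(n-i-k) \geq (n-(i+1)k)(n-i)$. Expanding both sides, the difference collapses to $ki(k-1) \geq 0$, which closes the induction.

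I expect no real obstacle: both approaches are short and essentially mechanical, and the probabilistic one is the cleaner of the two because it sidesteps the algebraic expansion entirely.
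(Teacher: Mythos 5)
Your proof is correct, but it takes a genuinely different route from the paper's. The paper writes the ratio $\binom{n-i}{k}/\binom{n}{k}$ as the product $\frac{(n-k)(n-k-1)\cdots(n-k-(i-1))}{n(n-1)\cdots(n-(i-1))}$ and repeatedly applies the elementary inequality $ba>(b+1)(a-1)$ to rearrange factors, concluding that the numerator exceeds $(n-1)(n-2)\cdots(n-(i-1))(n-ik)$ and hence the ratio is at least $1-\frac{ik}{n}$. You instead read the ratio as $\Pr\bigl[S\cap[i]=\emptyset\bigr]$ for a uniformly random $S\in\binom{[n]}{k}$ and invoke the union bound with $\Pr[j\in S]=k/n$, which delivers the same lower bound in one line; your backup induction on $i$ via the absorption identity is also sound, since the step does reduce to $(n-ik)(n-i-k)-(n-(i+1)k)(n-i)=ik(k-1)\geq 0$. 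All three arguments are elementary and short: your probabilistic version is arguably the most transparent, while the paper's stays entirely inside binomial-coefficient algebra, and your inductive variant replaces the paper's factor-shifting trick by a single quadratic expansion. Two minor remarks: the probability identity needs $i\leq n$, which is guaranteed by $n>ik$; and, as you note, for $n\leq ik$ the claim is vacuous because the right-hand side is nonpositive, so nothing is lost.
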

\begin{proof}
It is easy to check for all $b>a>0$ that
\begin{align}\label{ineq-prekey}
ba>(b+1)(a-1) \mbox{ holds.}
\end{align}
Note that
\[
\frac{\binom{n-i}{k}}{\binom{n}{k}} = \frac{(n-k)(n-k-1)\cdots(n-k-(i-1))}{n(n-1)\cdots(n-(i-1))}.
\]
Applying \eqref{ineq-prekey} repeatedly we see that the enumerator is greater  than $(n-1)(n-2)\cdots(n-(i-1))(n-ki)$ implying
\[
\frac{\binom{n-i}{k}}{\binom{n}{k}} \geq 1-\frac{ik}{n}.
\]
Thus \eqref{ineq-key1} holds.
\end{proof}

\begin{cor}
Let $n,k,t$ be positive integers. If $n\geq 2(t-1)(k-t)$ and $k>t\geq 2$, then
\begin{align} \label{ineq-key11}
&\binom{n-t-2}{k-t-2} \geq \frac{1}{2}\binom{n-3}{k-t-2}.
\end{align}
\end{cor}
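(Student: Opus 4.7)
\medskip

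\noindent\textbf{Proof plan.} The claim concerns only a ratio of two binomial coefficients sharing the same bottom parameter $k-t-2$, so the natural move is to express $n-t-2$ as $(n-3)-(t-1)$ and appeal directly to the inequality \eqref{ineq-key1} just proved. I would first dispense with the trivial ranges: if $k\le t+1$ then $k-t-2<0$ and both binomials vanish, while if $k=t+2$ both sides equal $1$; so assume $k\ge t+3$, in which case $k-t-2\ge 1$.

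Next, apply \eqref{ineq-key1} with the substitution $(n,k,i)\mapsto(n-3,\,k-t-2,\,t-1)$. This is legitimate provided $n-3>(t-1)(k-t-2)$, which I will verify from the hypothesis $n\ge 2(t-1)(k-t)$ together with $t\ge 2$. The inequality then yields
\[
\binom{n-t-2}{k-t-2}=\binom{(n-3)-(t-1)}{k-t-2}\ \ge\ \frac{(n-3)-(t-1)(k-t-2)}{n-3}\,\binom{n-3}{k-t-2}.
\]

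The proof is finished by showing that the displayed fraction is at least $1/2$, i.e., $n-3\ge 2(t-1)(k-t-2)$. But the hypothesis rewrites as
\[
n-3\ \ge\ 2(t-1)(k-t)-3\ =\ 2(t-1)(k-t-2)+\bigl(4(t-1)-3\bigr),
\]
and for $t\ge 2$ the term $4(t-1)-3\ge 1>0$, so the required bound holds with room to spare. There is no genuine obstacle here — the statement is essentially a bookkeeping corollary of \eqref{ineq-key1}; the only care needed is in identifying the correct shift ($i=t-1$, not $t+2-3$ or similar) and in checking that the hypothesis $n\ge 2(t-1)(k-t)$ absorbs the loss of the additive $-3$ against $-t-2$.
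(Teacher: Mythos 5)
Your proposal is correct and follows essentially the same route as the paper: both apply \eqref{ineq-key1} with base $n-3$, top $k-t-2$ and shift $i=t-1$, and then deduce $n-3\geq 2(t-1)(k-t-2)$ from $n\geq 2(t-1)(k-t)=2(t-1)(k-t-2)+4(t-1)$. Your extra handling of the degenerate cases $k\leq t+2$ is harmless bookkeeping that the paper leaves implicit.
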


\begin{proof}
Note that
\[
n\geq 2(t-1)(k-t)= 2(t-1)(k-t-2)+4(t-1)> 2(t-1)(k-t-2)+3.
\]
By \eqref{ineq-key1} we have
\begin{align*}
\binom{n-t-2}{k-t-2} &\geq \frac{(n-3)-(t-1)(k-t-2)}{n-3}\binom{n-3}{k-t-2}\geq \frac{1}{2} \binom{n-3}{k-t-2}.
\end{align*}
\end{proof}

Let us state one more result.

\begin{prop}[\cite{F78-2}]
Let $\ha,\hb\subset \binom{[n]}{k}$ be non-empty cross-intersecting and $n\geq 2k$. If $|\ha|\geq |\hb|\geq  \binom{n-2}{k-2}$, then
\begin{align}\label{F78}
|\ha|+|\hb|\leq 2\binom{n-1}{k-1}.
\end{align}
\end{prop}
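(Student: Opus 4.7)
My plan is to use the complement-shadow correspondence combined with the Kruskal-Katona theorem and a convexity argument. The cross-intersection of $\ha$ and $\hb$ is equivalent to saying that no $A\in\ha$ is contained in any set of the complement family $\hb^{*}:=\{[n]\setminus B:B\in\hb\}\subset\binom{[n]}{n-k}$; equivalently, $\ha$ is disjoint from the $(n-2k)$-th shadow $\partial^{n-2k}(\hb^{*})\subset\binom{[n]}{k}$. Therefore $|\ha|+|\partial^{n-2k}(\hb^{*})|\le\binom{n}{k}$. Writing $|\hb|=\binom{x}{n-k}$ for a real $x\ge n-k$ (Lov\'asz's form of Kruskal-Katona) gives $|\partial^{n-2k}(\hb^{*})|\ge\binom{x}{k}$, so
\[
|\ha|+|\hb| \;\le\; \Phi(x) \;:=\; \binom{n}{k}-\binom{x}{k}+\binom{x}{n-k}.
\]
For $n=2k$ this is already the required bound, since $\binom{2k}{k}=2\binom{2k-1}{k-1}$ and $\partial^{0}(\hb^{*})=\hb^{*}$; from here on assume $n>2k$.

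Next I narrow down the range of $x$. The hypothesis $|\hb|\ge\binom{n-2}{k-2}=\binom{n-2}{n-k}$ yields $x\ge n-2$. The bound $x\le n-1$ (equivalently $|\hb|\le\binom{n-1}{k-1}$) comes from applying the symmetric shadow argument to $\ha$: if $x>n-1$, then writing $|\ha|=\binom{y}{n-k}$ with $y\ge x>n-1$ gives $|\hb|\le\binom{n}{k}-\binom{y}{k}<\binom{n}{k}-\binom{n-1}{k}=\binom{n-1}{k-1}<|\hb|$, a contradiction. Two applications of Pascal's identity then yield the key equalities $\Phi(n-2)=\Phi(n-1)=2\binom{n-1}{k-1}$.

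The hard part will be to verify $\Phi(x)\le 2\binom{n-1}{k-1}$ throughout the interval $x\in[n-2,n-1]$. With the endpoint values matching $2\binom{n-1}{k-1}$, this reduces to proving $\Phi$ is convex on $[n-2,n-1]$; the calculation compares the second derivatives of the polynomials $\binom{x}{n-k}$ and $\binom{x}{k}$, where the hypothesis $n\ge 2k$ enters essentially by making the higher-degree term $\binom{x}{n-k}$ dominant on this interval. An alternative route avoiding fractional Kruskal-Katona is to use Hilton's Lemma to reduce to lex-initial $\ha,\hb$: the first $\binom{n-2}{k-2}$ lex-sets in $\binom{[n]}{k}$ form exactly the star at $\{1,2\}$, forcing every $A\in\ha$ to meet $\{1,2\}$ (otherwise $A\subset[3,n]$ and $|[3,n]\setminus A|=n-2-k\ge k-2$ produces a $(k-2)$-subset $C$ with $\{1,2\}\cup C\in\hb$ disjoint from $A$), and iterating this trade-off over each additional lex-set in $\hb$ gives a combinatorial proof matching the bound.
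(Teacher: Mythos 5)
The paper does not prove this proposition (it is quoted from \cite{F78-2}), so your argument stands or falls on its own. Your setup is sound: the complement--shadow reformulation, the Lov\'asz form of Kruskal--Katona giving $|\ha|+|\hb|\le\Phi(x)=\binom{n}{k}-\binom{x}{k}+\binom{x}{n-k}$, the separate treatment of $n=2k$, the localization $x\in[n-2,n-1]$ (the upper bound via the symmetric shadow argument applied to $\ha$ is correct), and the endpoint identities $\Phi(n-2)=\Phi(n-1)=2\binom{n-1}{k-1}$ all check out.

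The genuine gap is the step you yourself flag as ``the hard part'': the inequality $\Phi(x)\le 2\binom{n-1}{k-1}$ on $[n-2,n-1]$, which you reduce to convexity of $\Phi$ there but never prove. The justification offered --- that $n\ge 2k$ makes ``the higher-degree term $\binom{x}{n-k}$ dominant on this interval'' --- is not a proof and is in fact misleading: for $n>2k$ one has $\binom{x}{n-k}\le\binom{x}{k}$ throughout $[n-2,n-1]$ (e.g.\ $\binom{n-2}{n-k}=\binom{n-2}{k-2}<\binom{n-2}{k}$), so degree considerations alone do not give the needed comparison $\frac{d^2}{dx^2}\binom{x}{n-k}\ge\frac{d^2}{dx^2}\binom{x}{k}$ near $x\approx n$. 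A natural attempt via logarithmic derivatives gives $\binom{x}{n-k}''/\binom{x}{n-k}\ge\binom{x}{k}''/\binom{x}{k}$, but since $\binom{x}{n-k}<\binom{x}{k}$ on the interval this does not close the argument; one needs a genuinely quantitative comparison (it does appear to be true --- it is easy for $n=2k+1$ and checks out numerically in general --- but it must be carried out, and it is where the hypothesis $|\hb|\ge\binom{n-2}{k-2}$ really earns its keep). Your fallback ``alternative route'' has the same problem in sharper form: after Hilton's Lemma and the (correct) observation that $|\hb|\ge\binom{n-2}{k-2}$ forces every $A\in\ha$ to meet $\{1,2\}$, the phrase ``iterating this trade-off over each additional lex-set in $\hb$'' is precisely the content of the proposition --- one must show that each set of $\hb$ beyond the $\{1,2\}$-star costs $\ha$ at least one set, uniformly up to $|\hb|=\binom{n-1}{k-1}$ --- and no mechanism for this accounting is given. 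Either route is completable, but as written the decisive inequality is asserted rather than proved.
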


\section{Shifting ad extremis and the proof of Theorem \ref{main-1}}

Let us recall an important operation called shifting introduced by Erd\H{o}s, Ko and Rado \cite{ekr}. For $\hf\subset{[n]\choose k}$ and $1\leq i< j\leq n$, define
$$S_{ij}(\hf)=\left\{S_{ij}(F)\colon F\in\hf\right\},$$
where
$$S_{ij}(F)=\left\{
                \begin{array}{ll}
                  (F\setminus\{j\})\cup\{i\}, & j\in F, i\notin F \text{ and } (F\setminus\{j\})\cup\{i\}\notin \hf; \\[5pt]
                  F, & \hbox{otherwise.}
                \end{array}
              \right.
$$

Recall that $\ha\subset \binom{[n]}{k}$ is called initial iff for all  $A,B\in \binom{[n]}{k}$, $A\prec B$ and $B\in \ha$ imply $A\in \ha$.

The following statement goes back to Katona \cite{Katona66}. Let us include the very short proof.

\begin{prop}
Let $\hf\subset \binom{[n]}{k}$ be an initial family. Then
\begin{align}\label{ineq-shifted}
\partial \hf(\bar{1})\subset \hf(1).
\end{align}
\end{prop}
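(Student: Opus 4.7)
The plan is to unwind the definitions and exhibit, for each $G\in\partial\hf(\bar{1})$, a set $G\cup\{1\}$ which is pointwise dominated in the shifting partial order $\prec$ by a known member of $\hf$; then initiality will conclude the argument.

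Concretely, I would start by picking an arbitrary $G\in\partial\hf(\bar{1})$. By definition this means $|G|=k-1$ and there is some $F\in\hf(\bar{1})$ with $G\subset F$; equivalently, there exists $F\in\hf$ with $1\notin F$ and an element $x\in F$ such that $G=F\setminus\{x\}$. The goal then reduces to showing $G\cup\{1\}\in\hf$, because this is precisely the condition $G\in\hf(1)$.

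To show $G\cup\{1\}\in\hf$, I would write $F=(y_1,y_2,\ldots,y_k)$ with $2\leq y_1<y_2<\cdots<y_k$ and let $i$ be the index with $x=y_i$. Set $F':=\{1\}\cup(F\setminus\{x\})$, whose elements in increasing order are $1<y_1<\cdots<y_{i-1}<y_{i+1}<\cdots<y_k$. Comparing $F'$ with $F$ coordinate by coordinate: in position $1$ we have $1\leq y_1$, in positions $2\leq j\leq i$ we have $y_{j-1}\leq y_j$, and in positions $j>i$ both sets have $y_j$. Hence $F'\prec F$, and since $\hf$ is initial we conclude $F'\in\hf$, i.e., $G\cup\{1\}=F'\in\hf$, which gives $G\in\hf(1)$ as desired.

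There is really no obstacle here; the whole content of the proof is that replacing any element $x\in F$ by the smallest possible element $1\notin F$ produces a set dominated by $F$ in $\prec$, so initiality transfers membership. The only thing to be careful about is the bookkeeping of indices, and the observation that the hypothesis $1\notin F$ (built into $F\in\hf(\bar{1})$) is exactly what guarantees $|F'|=k$ rather than $k-1$, so that the comparison $F'\prec F$ makes sense between two $k$-sets.
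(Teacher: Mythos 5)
Your proof is correct and is essentially the paper's own argument: take $G=F\setminus\{x\}$ with $F\in\hf(\bar 1)$, note $(F\setminus\{x\})\cup\{1\}\prec F$, and invoke initiality to conclude $G\cup\{1\}\in\hf$, i.e., $G\in\hf(1)$. You merely spell out the coordinatewise verification of the domination that the paper leaves implicit.
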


\begin{proof}
Indeed, if $E\subset F\in \hf(\bar{1})$ and $E=F\setminus \{j\}$. Then by initiality $E\cup\{1\}\in \hf$, i.e., $E\in \hf(1)$.
\end{proof}

Define the {\it matching number $\nu(\hf)$} of $\hf$ as the maximum number of pairwise disjoint edges in $\hf$. Note that $\nu(\hf)=1$ iff $\hf$ is intersecting.
\begin{cor}
Let $\hf\subset \binom{[n]}{k}$ be an initial family. If $\hf\subset \binom{[n]}{k}$ has matching number $s$, then $\varrho(\hf)\geq \frac{1}{s+1}$.
\end{cor}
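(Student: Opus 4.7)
The plan is to prove the stronger inequality $|\hf(\bar 1)|\le s\,|\hf(1)|$; combined with $|\hf|=|\hf(1)|+|\hf(\bar 1)|$ and the identity $|\hf(1)|=\max_i|\hf(i)|$ that holds for initial $\hf$, this gives $\varrho(\hf)\ge 1/(s+1)$ at once.

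By Proposition 2.1, $\partial\hf(\bar 1)\subset\hf(1)$, so $|\partial\hf(\bar 1)|\le|\hf(1)|$. Thus it suffices to prove
\[
|\hf(\bar 1)|\le s\,|\partial\hf(\bar 1)|,
\]
a shadow bound for the $k$-graph $\hf(\bar 1)$, which inherits $\nu(\hf(\bar 1))\le\nu(\hf)=s$ from $\hf$. This falls under the general shadow-matching inequality: every $k$-uniform $\hg$ with $\nu(\hg)\le s$ satisfies $|\hg|\le s\,|\partial\hg|$. For $s=1$ this is exactly Katona's intersecting shadow theorem, since $\binom{2k-1}{k-1}/\binom{2k-1}{k}=1$.

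To establish the general case $s\ge 2$, I would first shift $\hg$ to make it initial (shifting preserves $|\hg|$ and $\nu(\hg)$ while weakly decreasing $|\partial\hg|$), and then argue by induction on $s$ using Katona's cyclic-permutation method: in each cyclic ordering of $[n]$ one counts the $k$-arcs lying in $\hg$, the matching bound $\nu(\hg)\le s$ restricts their number per ordering, and averaging over all orderings against the corresponding count for $(k-1)$-arcs in $\partial\hg$ produces the factor $s$ on the right-hand side. Alternatively, one iterates the shadow inclusion from Proposition~2.1 applied in turn to $\hg,\hg(\bar 1),\hg(\overline{\{1,2\}}),\ldots$, using that each successive family is initial with matching number at most $s$, and combining the resulting nested inclusions with the base case.

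The main obstacle is this shadow-matching inequality for $s\ge 2$. Elementary averaging --- every edge of an initial $\hg$ with $\nu(\hg)\le s$ meets $[sk]$, so $|\hg|\le\sum_{i=1}^{sk}|\hg(i)|\le sk\,|\hg(1)|$ --- only yields $\varrho\ge 1/(sk)$, which falls short of $1/(s+1)$ by a factor of order $k$. Extracting the optimal constant $s$ genuinely requires the full inductive bookkeeping described above rather than any single averaging step; this is the delicate combinatorial content of the corollary.
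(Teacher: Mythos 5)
Your reduction is exactly the paper's: combine $\partial \hf(\bar{1})\subset \hf(1)$ (inequality \eqref{ineq-shifted}) with the shadow--matching inequality ``$\nu(\hg)\leq s$ implies $|\hg|\leq s\,|\partial\hg|$'', applied to $\hg=\hf(\bar{1})$, whose matching number is at most $s$ since it is a subfamily of $\hf$. The difference is that the paper does not prove that inequality: it quotes it from Frankl's \emph{Shadows and shifting} paper \cite{F91}. You instead propose to prove it, and that is where your argument has a genuine gap: neither of your two sketches is a proof. The cyclic-permutation sketch does not work as described. In a fixed cyclic order, a $(k-1)$-interval belongs to $\partial\hg$ as soon as it is contained in \emph{some} member of $\hg$, which need not be a $k$-interval of that same order, so there is no per-order comparison between the count of $k$-intervals in $\hg$ and of $(k-1)$-intervals in $\partial\hg$; and even granting some per-order bound coming from $\nu(\hg)\leq s$, the normalizations when averaging $k$-intervals versus $(k-1)$-intervals over all cyclic orders differ by a factor of order $(n-k)/k$, which produces a bound of the same defective shape as the $1/(sk)$ averaging you rightly reject. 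The iteration sketch also does not close: passing from $\hg$ to $\hg(\bar{1})$, $\hg(\overline{\{1,2\}})$, etc.\ need not decrease the matching number (it can remain equal to $s$), so there is no visible induction on $s$, and you do not explain how the nested shadow inclusions are to be combined to produce the factor $s$.

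So, as a self-contained argument, the proposal is incomplete at precisely the point you yourself identify as the main obstacle; the statement you need is a nontrivial theorem whose known proof (via shifting) is in \cite{F91}. If citing that result is allowed --- as the paper does --- then the remainder of your argument is correct and coincides with the paper's proof; note also that the final averaging step does not even need $|\hf(1)|=\max_i|\hf(i)|$, since $\varrho(\hf)\geq |\hf(1)|/|\hf|$ suffices.
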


\begin{proof}
It is proved in \cite{F91} that if $\hg$ has matching number $s$ then its shadow has size at least $|\hg|/s$. By \eqref{ineq-shifted} we infer
\[
|\hf(1)|\geq |\partial \hf(\bar{1})|\geq |\hf(\bar{1})|/s
\]
whence $\varrho(\hf)\geq \frac{1}{s+1}$.
\end{proof}

Let us define formally the notion of {\it shifting ad extremis} developed recently (cf. \cite{FW2022}). It can be applied  to one, two  or several families. For notational convenience we explain it for the case of two families in detail.

Let $\hf\subset \binom{[n]}{k}$, $\hg\subset \binom{[n]}{\ell}$ be two families and suppose that we are concerned, as usual in extremal set theory, to obtain upper bounds for $|\hf|+|\hg|$, $|\hf||\hg|$ or some other function $f$ of $|\hf|$ and $|\hg|$. For this we suppose that $\hf$ and $\hg$ have certain properties (e.g., cross-intersecting and non-trivial). Since $|S_{ij}(\hh)|=|\hh|$ for all families $\hh$, it is convenient to apply $S_{ij}$ simultaneously to $\hf$ and $\hg$. Certain properties, e.g., $t$-intersecting, cross-intersecting or $\nu(\hf)\leq r$ are known to be maintained by $S_{ij}$. However, some other properties may be destroyed, e.g., non-triviality, $\varrho(\hg)\leq c$, etc. Let $\hp$ be the collection of the latter properties that we want to maintain.

For any family $\hh$, define the quantity
\[
w(\hh) =\sum_{H\in \hh} \sum_{i\in H} i.
\]
Obviously $w(S_{ij}(\hh))\leq w(\hh)$ for $1\leq i<j\leq n$ with strict inequality unless $S_{ij}(\hh)=\hh$.

\begin{defn}\label{defn-2.1}
Suppose that $\hf\subset \binom{[n]}{k}$, $\hg\subset \binom{[n]}{\ell}$ are families having property $\hp$. We say that $\hf$ and $\hg$ have been {\it shifted ad extremis} with respect to $\hp$ if $S_{ij}(\hf)=\hf$ and $S_{ij}(\hg)=\hg$ for every pair $1\leq i<j\leq n$ whenever $S_{ij}(\hf)$ and $S_{ij}(\hg)$ also have property $\hp$.
\end{defn}

Let us show that we can obtain shifted ad extremis families by the following shifting ad extremis process. Let $\hf$, $\hg$ be cross-intersecting families with property $\hp$. Apply the shifting operation $S_{ij}$, $1\leq i<j\leq n$, to $\hf,\hg$ simultaneously and continue as long as the property $\hp$ is maintained. By abuse of notation, we keep denoting the current families by $\hf$ and $\hg$ during the shifting process. If $S_{ij}(\hf)$ or $S_{ij}(\hg)$ does not have property $\hp$, then we do not apply  $S_{ij}$ and choose a different pair $(i',j')$. However we keep returning to previously failed pairs $(i,j)$, because it might happen that at a later stage in the process $S_{ij}$ does not destroy property $\hp$ any longer. Note that the quantity $w(\hf)+w(\hg)$ is a positive integer and it decreases strictly in  each step. This guarantees that eventually we shall arrive at families that are shifted ad extremis with respect to $\hp$.

Let $\hf$, $\hg$ be shifted ad extremis families. A pair $(i,j)$ is called {\it shift-resistant} if either $S_{ij}(\hf)\neq \hf$ or $S_{ij}(\hg)\neq \hg$.

In the case of several families, $\hf_i\subset\binom{[n]}{k_i}$, $1\leq i\leq r$. It is essentially the same. One important property that is maintained by simultaneous shifting is {\it overlapping}, namely the non-existence of pairwise disjoint edges $F_1\in \hf_1,\ldots,F_r\in \hf_r$ (cf. \cite{HLS}).

\begin{proof}[Proof of Theorem \ref{main-1}]
Let $\hf\subset \binom{[n]}{k}$ be intersecting,  $|\hf|\geq 2^dd^{2d+1}\binom{n-d-1}{k-d-1}$ and $\varrho(\hf)\leq  \frac{1}{d}$. Without loss of generality, we may assume that  $\hf$ is shifted ad extremis for $\varrho(\hf)\leq \frac{1}{d}$. Then $S_{ij}(\hf)\neq \hf$ implies $\varrho(S_{ij}(\hf))>  \frac{1}{d}$. Thus, a pair $P=(i,j)$ is shift-resistant iff $|\hf(i)|+|\hf(j)|>|\hf|/d$.

Let $P_1,\ldots, P_s$ be a maximal collection of pairwise disjoint shift-resistant pairs, $P_i=(x_i,y_i)$, $1\leq i\leq s$. Clearly,
\begin{align}\label{ineq-new2.1}
\sum_{1\leq i\leq s} (|\hf(x_i)|+|\hf(y_i)|) \geq \frac{s}{d}|\hf|.
\end{align}

For a pair of subsets $E_0\subset E$, let us use the notation
\[
\hf(E_0,E) =\{F\setminus E\colon F\in \hf,\ F\cap E=E_0\}.
\]
Note that $\hf(E,E)=\hf(E)$ and $\hf(\emptyset,E)=\hf(\overline{E})$.
\begin{claim}
For all $D\in \binom{[n]}{d}$,
\begin{align}\label{ineq-new2.2}
|\hf(\overline{D})|\geq (d-1)|\hf(D)|.
\end{align}
\end{claim}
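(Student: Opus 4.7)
The claim follows from a very short double-counting argument that only uses $\varrho(\hf)\le 1/d$; the large lower bound on $|\hf|$ and the condition $n\ge 4(d-1)dk$ will presumably be used in later steps of the proof of Theorem \ref{main-1}, not here.

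The plan is as follows. First, translate the hypothesis $\varrho(\hf)\le \tfrac1d$ into the per-vertex inequality $|\hf(i)|\le |\hf|/d$, valid for every $i\in[n]$, and in particular for every $i\in D$. Summing over $i\in D$ gives
\[
\sum_{i\in D}|\hf(i)|\ \le\ d\cdot\frac{|\hf|}{d}\ =\ |\hf|.
\]
Second, rewrite the left-hand side by swapping the order of summation: it double-counts the pairs $(F,i)$ with $F\in\hf$ and $i\in F\cap D$, so
\[
\sum_{i\in D}|\hf(i)|\ =\ \sum_{F\in\hf}|F\cap D|.
\]

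Third, stratify $\hf$ according to the size of $F\cap D$. Set $a_j:=|\{F\in\hf\colon |F\cap D|=j\}|$ for $0\le j\le d$, so that $a_0=|\hf(\overline{D})|$, $a_d=|\hf(D)|$, and $|\hf|=\sum_{j=0}^{d}a_j$. Combining the first two steps yields
\[
\sum_{j=0}^{d}j\,a_j\ \le\ \sum_{j=0}^{d}a_j,
\]
which rearranges as
\[
\sum_{j=1}^{d}(j-1)a_j\ \le\ a_0.
\]
Since every term on the left is non-negative, discarding the contributions from $1\le j\le d-1$ and keeping only the $j=d$ term gives $(d-1)a_d\le a_0$, i.e., $(d-1)|\hf(D)|\le |\hf(\overline{D})|$, which is \eqref{ineq-new2.2}.

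There is no real obstacle in this claim: the only ingredient beyond definitions is the per-vertex bound $|\hf(i)|\le|\hf|/d$, and the result is a clean consequence of linearity of the ``sum of intersection sizes'' functional. The interesting part of the theorem must therefore be how this claim, together with the shift-resistant pairs $P_1,\ldots,P_s$ in \eqref{ineq-new2.1}, is leveraged against the lower bound $|\hf|\ge 2^d d^{2d+1}\binom{n-d-1}{k-d-1}$ to reach a contradiction.
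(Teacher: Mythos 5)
Your proof is correct and is essentially the paper's argument: the paper also sums $|\hf(i)|$ over $i\in D$, bounds this sum by $|\hf|$ via $\varrho(\hf)\le \tfrac1d$, and compares it with the stratification of $\hf$ by $|F\cap D|$ (written there via the identity $\sum_{x\in E}|\hf(x)|=\sum_j\sum_{E_j\in\binom{E}{j}}j|\hf(E_j,E)|$), discarding the intermediate terms exactly as you do. No differences worth noting.
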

\begin{proof}
For any subset $E\subset [n]$ note the identity
\begin{align}\label{ineq-new2.3}
\sum_{x\in E}|\hf(x)|&= \sum_{1\leq j\leq |E|}\sum_{E_j\in \binom{E}{j}} j|\hf(E_j,E)|\nonumber\\[5pt]
&\geq  \sum_{E'\subset E, |E'|\geq 1} |\hf(E',E)|+(|E|-1)|\hf(E)|\nonumber\\[5pt]
&=  |\hf|-|\hf(\overline{E})|+(|E|-1)|\hf(E)|.
\end{align}
If $|E|=d$, then $\varrho(\hf)\leq \frac{1}{d}$ implies that the LHS of \eqref{ineq-new2.3} is less than $|\hf|$. Comparing with the RHS yields \eqref{ineq-new2.2}.
\end{proof}

\begin{claim}
For all  $D\in \binom{[n]}{d}$,
\begin{align}\label{ineq-new2.4}
|\hf(D)| < d\binom{n-d-1}{k-d-1}.
\end{align}
\end{claim}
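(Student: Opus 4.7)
The plan is to argue by contradiction: assume $|\hf(D)|\geq d\binom{n-d-1}{k-d-1}$ for some $D\in\binom{[n]}{d}$. My opening observation is that $\hf(D)\subset \binom{[n]\setminus D}{k-d}$ and $\hf(\overline{D})\subset \binom{[n]\setminus D}{k}$ are cross-intersecting on the ground set $[n]\setminus D$: whenever $F_1\supset D$ and $F_2\cap D=\emptyset$ both lie in $\hf$, intersectingness of $\hf$ forces $(F_1\setminus D)\cap F_2\neq\emptyset$. Hilton's Lemma then lets me pass to the lex compressions $\ha=\hl([n]\setminus D,k-d,|\hf(D)|)$ and $\hb=\hl([n]\setminus D,k,|\hf(\overline{D})|)$, which remain cross-intersecting; any size bound I prove for $\hb$ transfers back to $\hf(\overline{D})$.

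Write $N=n-d$, $K=k-d$, and relabel $[n]\setminus D$ as $[N]$. I would first note
\[
d\binom{N-1}{K-1}\geq \sum_{i=0}^{d-1}\binom{N-1-i}{K-1}=\binom{N}{K}-\binom{N-d}{K},
\]
so $\ha$ contains every $K$-subset of $[N]$ meeting $[d]$. Since $n\geq 2k$, a routine Hilton-style argument then forces every $G\in\hb$ to contain $[d]$. To sharpen this, I would evaluate the surplus via Pascal's rule,
\[
|\hf(D)|-\Bigl(\binom{N}{K}-\binom{N-d}{K}\Bigr)\geq (d-1)\binom{N-2}{K-2}\geq \binom{N-d-2}{K-2},
\]
and note that the first $\binom{N-d-2}{K-2}$ $K$-subsets of $[N]\setminus[d]$ in lex order are precisely those containing $\{d+1,d+2\}$. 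Hence $\ha$ contains every $K$-set of the form $\{d+1,d+2\}\cup X$, which forces every $G\in\hb$ to meet $\{d+1,d+2\}$ as well.

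From the two constraints $[d]\subset G$ and $G\cap\{d+1,d+2\}\neq\emptyset$, inclusion-exclusion gives
\[
|\hf(\overline{D})|\leq 2\binom{n-2d-1}{k-d-1}-\binom{n-2d-2}{k-d-2}.
\]
Inserting this along with Claim 1, $|\hf(\overline{D})|\geq (d-1)|\hf(D)|$, and the contradiction hypothesis yields
\[
d(d-1)\binom{n-d-1}{k-d-1}\leq 2\binom{n-2d-1}{k-d-1}-\binom{n-2d-2}{k-d-2}.
\]
For $d\geq 3$ the left-hand side is at least $6\binom{n-d-1}{k-d-1}$ while the right-hand side is bounded by $2\binom{n-d-1}{k-d-1}$, an immediate contradiction. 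For $d=2$ the inequality becomes $2\binom{n-3}{k-3}\leq 2\binom{n-5}{k-3}-\binom{n-6}{k-4}$, which fails since $\binom{n-3}{k-3}>\binom{n-5}{k-3}$.

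The most delicate step will be the surplus count: I need to certify both that the excess beyond ``$K$-sets meeting $[d]$'' is at least $\binom{N-d-2}{K-2}$ and that the extra sets sit precisely in the lex block of sets containing $\{d+1,d+2\}$. The hypothesis $n\geq 4(d-1)dk$ enters here to keep $\binom{N-d-2}{K-2}/\binom{N-2}{K-2}$ close to $1$, so the factor $d-1$ in the surplus estimate comfortably dominates. Everything else reduces to routine binomial coefficient bookkeeping.
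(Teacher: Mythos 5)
Your argument is correct and essentially the same as the paper's: both proofs assume $|\hf(D)|\geq d\binom{n-d-1}{k-d-1}$, pass to lex compressions of the cross-intersecting pair $\hf(D)\subset\binom{[n]\setminus D}{k-d}$, $\hf(\overline{D})\subset\binom{[n]\setminus D}{k}$ via Hilton's Lemma, observe that the compression of $\hf(D)$ contains all $(k-d)$-sets meeting $[d]$ together with those containing $\{d+1,d+2\}$, deduce that every member of the compressed $\hf(\overline{D})$ contains $[d]$ and meets $\{d+1,d+2\}$, and contradict $|\hf(\overline{D})|\geq (d-1)|\hf(D)|$ from \eqref{ineq-new2.2}. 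The only differences are cosmetic: your inclusion--exclusion bound $2\binom{n-2d-1}{k-d-1}-\binom{n-2d-2}{k-d-2}$ equals the paper's $\binom{n-2d-1}{k-d-1}+\binom{n-2d-2}{k-d-1}$, and the hypothesis $n\geq 4(d-1)dk$ is not actually needed for your surplus step since $(d-1)\binom{n-d-2}{k-d-2}\geq\binom{n-2d-2}{k-d-2}$ holds trivially.
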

\begin{proof}
 For convenience assume that $D=[n-d+1,n]$. Then $\hf(D)\subset \binom{[n-d]}{k-d}$, $\hf(\overline{D})\subset \binom{[n-d]}{k}$ and $\hf(D), \hf(\overline{D})$ are cross-intersecting. If
\[
|\hf(D)| \geq d\binom{n-d-1}{k-d-1}>\sum_{1\leq j\leq d}\binom{n-d-j}{k-d-1} +\binom{n-2d-2}{k-d-2},
\]
then  $\hl(n-d,k-d,|\hf(D)|)$ contains
\[
\left\{A\in \binom{[n-d]}{k-d}\colon A\cap [d] \neq \emptyset\right\}\bigcup \left\{A\in \binom{[d+1,n-d]}{k-d}\colon \{d+1,d+2\}\subset A\right\} .
\]
By Hilton's Lemma, we have
\[
\hl(n-d,k,|\hf(\overline{D})|)\subset  \left\{B\in \binom{[n-d]}{k}\colon [d] \subset B\mbox{ and }B\cap\{d+1,d+2\}\neq \emptyset\right\}.
\]
It follows that
\[
|\hf(\overline{D})|\leq \binom{n-2d-1}{k-d-1}+\binom{n-2d-2}{k-d-1}<|\hf(D)|,
\]
contradicting \eqref{ineq-new2.2}.
\end{proof}
\begin{claim}
\begin{align}\label{ineq-new2.5}
s\leq d^2-d.
\end{align}
\end{claim}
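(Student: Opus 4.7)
My plan is to argue by contradiction: assume $s \geq d^2 - d + 1$ and, by discarding pairs if needed, take exactly $s = d^2 - d + 1$ shift-resistant pairs. Setting $Y = \bigcup_{i=1}^{s} P_i$ so that $|Y| = 2s$, the whole argument revolves around double-counting
\[
\Sigma \;:=\; \sum_{D \in \binom{Y}{d}} |\hf(D)| \;=\; \sum_{F \in \hf} \binom{|F \cap Y|}{d},
\]
the identity holding since both sides count pairs $(F, D)$ with $F \in \hf$, $D \in \binom{Y}{d}$, $D \subset F$. I plan to bound $\Sigma$ from below using \eqref{ineq-new2.1} and from above using \eqref{ineq-new2.4}, then extract a contradiction from the hypothesis $|\hf| \geq 2^d d^{2d+1}\binom{n-d-1}{k-d-1}$.

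For the lower bound I would use the elementary inequality $\binom{j}{d} \geq j - (d-1)$, valid for every non-negative integer $j$ (the left side vanishes and the right side is non-positive when $j \leq d-1$, while a straightforward induction handles $j \geq d$). Summing over $F \in \hf$ and applying \eqref{ineq-new2.1} gives
\[
\Sigma \;\geq\; \sum_{i=1}^{s}\bigl(|\hf(x_i)| + |\hf(y_i)|\bigr) - (d-1)|\hf| \;\geq\; \frac{s\,|\hf|}{d} - (d-1)|\hf| \;=\; \frac{|\hf|}{d},
\]
where the final equality uses $s = d^2 - d + 1$. From \eqref{ineq-new2.4} the upper bound $\Sigma < \binom{2s}{d}\,d\binom{n-d-1}{k-d-1}$ is immediate.

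Combining the two bounds and substituting the hypothesis on $|\hf|$ reduces everything to the purely numerical inequality $\binom{2s}{d} > 2^d d^{2d-1}$, which fails since
\[
\binom{2s}{d} \;\leq\; \frac{(2s)^d}{d!} \;<\; \frac{(2d^2)^d}{d!} \;=\; \frac{2^d d^{2d}}{d!} \;\leq\; 2^d d^{2d-1},
\]
using $s < d^2$ and $d! \geq d$ for $d \geq 2$. The delicate point I anticipate is the lower bound step: \eqref{ineq-new2.1} barely exceeds $(d-1)|\hf|$ when $s = d^2 - d + 1$, so the linear inequality $\binom{j}{d} \geq j - (d-1)$, tight at the LP-extremal values $j = d-1$ and $j = d$, is essentially the only convexity tool strong enough; cruder bounds such as $\binom{j}{d} \gtrsim (j/d)^d$ would lose exactly the polynomial factor that makes the numerical comparison work.
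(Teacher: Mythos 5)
Your proof is correct and follows essentially the same route as the paper: both arguments double count $\sum_{D\in\binom{Y}{d}}|\hf(D)|$ over the union of $d^2-d+1$ disjoint shift-resistant pairs, bound it below via \eqref{ineq-new2.1} (your linearization $\binom{j}{d}\geq j-(d-1)$ is just a compact restatement of the paper's case split $j\leq d$ versus $j>d$) and above via \eqref{ineq-new2.4}, arriving at the same estimate $|\hf|<d^2\binom{2(d^2-d+1)}{d}\binom{n-d-1}{k-d-1}$. The only difference is cosmetic: your uniform bound $\binom{2s}{d}\leq (2s)^d/d!\leq 2^d d^{2d-1}$ handles all $d\geq 2$ at once, whereas the paper uses $\binom{n}{k}<(en/k)^k$ for $d\geq 4$ and checks $d=2,3$ separately.
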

\begin{proof}
Assume that $s\geq d^2-d+1$. Define $E=P_1\cup\ldots\cup P_{d^2-d+1}$ and
\[
\hf_j =\{F\in \hf\colon |F\cap E|=j\}.
\]
Clearly $|E|=2(d^2-d+1)$. Then (as \eqref{ineq-new2.3}) the following formula is evident.
\begin{align*}
|\hf_j| =\sum_{E_j\in \binom{E}{j}}|\hf(E_j,E)|.
\end{align*}
By \eqref{ineq-new2.4} we have
\begin{align*}
\sum_{D\in \binom{E}{d}} |\hf(D)| <\binom{2(d^2-d+1)}{d}d\binom{n-d-1}{k-d-1}.
\end{align*}
Note that for $d\leq  j\leq |E|$, $E_j\in \binom{E}{j}$, a set $F\in \hf$ with $F\cap E=E_j$ is counted $\binom{j}{d}\geq j$ times in $\sum\limits_{D\in \binom{E}{d}} |\hf(D)|$ if $j>d$ and exactly once if $j=d$. Thus,
\begin{align}\label{ineq-new2.8}
|\hf_d|+\sum_{d< j\leq |E|} j|\hf_j|&=\sum_{D\in \binom{E}{d}} |\hf(D,E)|+\sum_{d<j\leq |E|}\sum_{E_j\in\binom{E}{j}} j|\hf(E_j,E)|\nonumber\\[5pt]
&\leq\sum_{d\leq j\leq |E|}\sum_{E_j\in\binom{E}{j}} \binom{j}{d}|\hf(E_j,E)|\nonumber\\[5pt]
&=\sum_{D\in \binom{E}{d}} |\hf(D)| <\binom{2(d^2-d+1)}{d}d\binom{n-d-1}{k-d-1}.
\end{align}
By \eqref{ineq-new2.1} and \eqref{ineq-new2.8}, we obtain that
\begin{align*}
\frac{d^2-d+1}{d} |\hf|
\leq \sum_{x\in E} |\hf(x)|&= \sum_{1\leq j\leq |E|} j|\hf_j| \\[5pt]
&< (d-1)\sum_{1\leq j\leq d}|\hf_j| + |\hf_d|+\sum_{d< j\leq |E|} j|\hf_j|\\[5pt]
&< (d-1)|\hf| +d\binom{2(d^2-d+1)}{d}\binom{n-d-1}{k-d-1}.
\end{align*}
It follows that
\[
|\hf| < d^2 \binom{2(d^2-d+1)}{d}\binom{n-d-1}{k-d-1}.
\]
Let $c(d) = d^2 \binom{2(d^2-d+1)}{d}$.  For $d\geq 4$, since $e^d<4^{d-1}\leq d^{d-1}$,  using $\binom{n}{k}<\left(\frac{en}{k}\right)^k$ we have
\[
c(d)<2^de^dd^{d+2}<2^dd^{2d+1},
\]
contradicting our assumption $|\hf|\geq2^dd^{2d+1}\binom{n-d-1}{k-d-1}$. For $d=2,3$, it can be checked directly that $c(d)<2^dd^{2d+1}$, contradicting our assumption as well.
\end{proof}

Fix $X\subset [n]$ with $|X|=2d^2-2d$ and $P_1\cup\ldots\cup P_s\subset X$. Define
\[
\hht = \left\{T\subset [n]\colon  |T|\leq d,\ |\hf(T)| > (2d^2)^{-|T|} |\hf|\right\}.
\]
By \eqref{ineq-new2.1}, there exists $x\in X$ such that
\[
|\hf(x)|\geq \frac{1}{2d}|\hf| > \frac{1}{2d^2} |\hf|,
\]
implying  $\hht\neq \emptyset$.
By \eqref{ineq-new2.4} we know that for every $D\in \binom{[n]}{d}$,
\[
|\hf(D)| <d \binom{n-d-1}{k-d-1} \leq (2d^2)^{-d} |\hf|.
\]
Thus we have $|T|\leq d-1$ for every $T\in \hht$.

Now choose $T\in \hht$ such that $|T|=t$ is maximum. Note that the maximality of $t$ implies that  for every $Z\subset [n]$ with $t<|Z|\leq d$
\begin{align}\label{ineq-hfz}
|\hf(Z)|< (2d^2)^{-|Z|} |\hf|.
\end{align}
Set $\ha=\hf(T,X\cup T)$ and  $U=[n]\setminus (X\cup T)$. Assume that
\[
U=\{u_1,u_2,\ldots,u_m\} \mbox{ with } u_1<u_2<\ldots <u_m.
\]
Let $Q=\{u_1,u_2,\ldots,u_{2d-t}\}$.  By  \eqref{ineq-hfz} we have
\begin{align}\label{ineq-2.2}
|\ha(\overline{Q})| &\geq |\hf(T)| - \sum_{x\in (X\setminus T)\cup Q} |\hf(T\cup \{x\})|\nonumber\\[5pt]
&\geq (2d^2)^{-t} |\hf| - (2d^2-2d+2d-t) (2d^2)^{-(t+1)} |\hf|\nonumber\\[5pt]
&= \frac{t}{(2d^2)^{t+1}}|\hf|>\binom{n-d-1}{k-d-1}.
\end{align}

\begin{claim}
For every $S\subset X\setminus T$,
\begin{align}\label{ineq-hfSEminusT}
|\hf(S,X\cup T)|\leq  2^{2d-1}\binom{n-d-1-|S|}{k-d-1-|S|}.
\end{align}
\end{claim}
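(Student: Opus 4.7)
The plan is to apply Hilton's Lemma iteratively to the cross-intersecting pair $\hf(S,X\cup T)$ and $\ha(\overline{Q})$, exploiting that both are initial families. Write $\hb:=\hf(S,X\cup T)$ for brevity and let $u_1<u_2<\dots$ enumerate $U$.

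First, because $\hf$ has been shifted ad extremis for the property $\varrho(\hf)\le 1/d$ and every shift-resistant pair lies in $X$, the family $\hf$ is invariant under $S_{ij}$ for all $i,j\in U$. Consequently $\hb$ is initial as a $(k-|S|)$-family on $U$, while $\ha(\overline{Q})$ is initial as a $(k-t)$-family on $V=U\setminus Q$. The two are cross-intersecting on the common ground $U$.

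Second, I apply Hilton's Lemma iteratively. Because
$$|\ha(\overline{Q})|>\binom{n-d-1}{k-d-1}\ge\binom{|U|-(d+1-t)}{k-d-1},$$
the lex-first segment $\hl(U,k-t,|\ha(\overline{Q})|)$ contains every $(k-t)$-subset of $U$ containing $\{u_1,\dots,u_{d+1-t}\}$. Hilton's Lemma then forces every set in the lex-first segment of the same size as $\hb$ to meet $\{u_1,\dots,u_{d+1-t}\}$. I iterate this by passing to links at chosen elements in $Q=\{u_1,\dots,u_{2d-t}\}$: since $A\cap Q=\emptyset$ for every $A\in\ha(\overline{Q})$, the cross-intersection with $\ha(\overline{Q})$ persists on the reduced ground, and the lower bound $|\ha(\overline{Q})|>\binom{n-d-1}{k-d-1}$ continues to give the required Hilton threshold. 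After $d+1$ iterations, each set in the lex-first rearrangement of $\hb$ contains at least $d+1$ elements in an initial segment of $U$ of size at most $2d+1-t$.

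Third, the argument following \eqref{ineq-new2.1} already shows that $\hht$ contains every singleton $\{x\}$ with $|\hf(x)|>|\hf|/(2d^2)$, so the maximum value $t=|T|$ is at least $1$, whence $2d+1-t\le 2d$. A union bound over the at most $\binom{2d+1-t}{d+1}\le\binom{2d}{d+1}$ possible $(d+1)$-subsets of the initial segment gives
$$|\hb|\le\binom{2d}{d+1}\binom{|U|-(d+1)}{k-|S|-(d+1)}.$$
Using $\binom{2d}{d+1}=\binom{2d}{d-1}$ and $2\binom{2d}{d-1}+\binom{2d}{d}\le 2^{2d}$, which yields $\binom{2d}{d+1}<2^{2d-1}$ for $d\ge 2$, together with $|U|+|S|\le n$ (from $|U|+|X\cup T|=n$ and $S\subset X\cup T$), one obtains $|\hf(S,X\cup T)|\le 2^{2d-1}\binom{n-d-1-|S|}{k-d-1-|S|}$.

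The main obstacle will be carrying out the iteration rigorously in the second paragraph: at each step one must verify that the link of the lex-first rearrangement of $\hb$ is itself a lex-first segment on the reduced ground, that the cross-intersection with $\ha(\overline{Q})$ persists (which uses $w_i\in Q$ crucially), and that the $(d+1)$ elements can be pinned down within an initial segment as short as $2d+1-t$ rather than the larger segment $(d+1)(d+1-t)$ obtained from a naive telescoping of Hilton's estimates.
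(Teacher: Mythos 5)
There is a genuine gap, and it is precisely the step you defer: the ``iteration'' of Hilton's Lemma cannot be carried out as described. A single application of Hilton's Lemma to the pair $\ha(\overline{Q})$, $\hb:=\hf(S,X\cup T)$ only yields a cross \emph{1}-intersecting conclusion about the lex rearrangements: every set of $\hl(U,k-|S|,|\hb|)$ meets $\{u_1,\ldots,u_{d+1-t}\}$, nothing more. To iterate you would need either (i) that the link of $\hl(U,k-|S|,|\hb|)$ at an element of $Q$ is still cross-intersecting with $\ha(\overline{Q})$ -- false in general, since the rearranged $\ha$-family is no longer disjoint from $Q$, so your key observation ``$A\cap Q=\emptyset$'' is lost after rearrangement -- or (ii) to work with the links of the actual family $\hb$ at elements of $Q$, which are indeed cross-intersecting with $\ha(\overline{Q})$, but then Hilton only speaks about \emph{their own, separate} lex rearrangements, and these statements do not compose into one structural statement about a single rearrangement of $\hb$; you cannot even write $|\hb|\le\sum_{u\in D_1}|\hb(u)|$, because ``every set of the rearrangement meets $D_1$'' does not imply that every set of $\hb$ does. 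Moreover, even granting a naive telescoping, each round pins down only one new element inside a fresh prefix of length $d+1-t$, so after $d+1$ rounds the $d+1$ elements lie in a prefix of length roughly $(d+1)(d+1-t)$, exactly as you note; the union bound then gives $\binom{(d+1)(d+1-t)}{d+1}$, which for $t=1$ grows like $d^{d+1}$ and overshoots $2^{2d-1}$. So the shortening to a prefix of length $2d+1-t$ is not a technical refinement but the whole content of the claim, and your sketch contains no mechanism to produce it. (A further caution: the structural statement you aim for is stronger than what is true of $\hb$ itself -- $\hb$ may well contain sets with small trace on $Q$, only few of them -- so it can only be established as a size bound, which is what you are trying to prove.)

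The missing strength has to come from a \emph{multiply} cross-intersecting property, and this is how the paper proceeds: decompose $\hb=\sum_{R\subset Q}\hb(R,Q)$ by the trace on $Q$; use initiality of $\hf$ on $[n]\setminus X$ (as in Proposition \ref{prop-1.2}) to show that $\ha(\overline{Q})$ and $\hb(R,Q)$ are cross $(2d-t-r+1)$-intersecting when $|R|=r\le d$; then \eqref{ineq-2.2} together with \eqref{ineq-walk} shows $\ha(\overline{Q})$ is not pseudo $(2d-t-r+1)$-intersecting, so Proposition \ref{fk2021} makes $\hb(R,Q)$ pseudo $(2d-t-r+2)$-intersecting and \eqref{ineq-walk} bounds each such piece below $\binom{n-d-1-|S|}{k-d-1-|S|}$; the traces with $|R|\ge d+1$ are bounded trivially by fixing $d+1$ elements of $Q$, and $\sum_{i\le d+1}\binom{2d-t}{i}\le 2^{2d-1}$ (using $t\ge1$) finishes. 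If you want to salvage your plan, you must replace the repeated use of Hilton's Lemma by some such cross $\ell$-intersecting upgrade for the pieces of $\hb$ with small trace on $Q$; Hilton's Lemma alone does not see that information.
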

\begin{proof}
Let $\hb=\hf(S,X\cup T)$. Since $P_1,P_2,\ldots,P_s$ is a maximal collection of pairwise disjoint shift-resistant pairs and $P_1\cup P_2\cup \ldots \cup P_s\subset X$, we infer that $\hf$ is initial on $[n]\setminus X$. It follows that $\ha,\hb$ are initial and cross-intersecting. For any $R\subset Q$ with $|R|=r\leq d$, we have $\ha(\overline{Q})\subset\binom{[n]\setminus (X\cup T\cup Q)}{k-t}$ and  $\hb(R,Q)\subset\binom{[n]\setminus (E\cup T\cup Q)}{k-|S|-r}$.
 By a similar argument as in the proof of Proposition \ref{prop-1.2},  we infer that $\ha(\overline{Q})$, $\hb(R,Q)$ are cross $(2d-t-r+1)$-intersecting. Since $r\leq d$ implies $2d-t-r+1\geq d+1-t$, by \eqref{ineq-2.2} and \eqref{ineq-walk} we see that $\ha(\overline{Q})$ is not pseudo $(2d-t-r+1)$-intersecting, by  Proposition \ref{fk2021} $\hb(R,Q)$ is pseudo $(2d-t-r+2)$-intersecting.  Thus by \eqref{ineq-walk}
\[
|\hb(R,Q)| \leq \binom{n-|X\cup T\cup Q|}{k-|S|-r-(2d-t-r+2)} =\binom{n-|X\cup T\cup Q|}{k-2d-2+t-|S|}.
\]
Since $t\leq d-1$, $|X\cup T\cup Q| \geq |S|+2d-t\geq |S|+d+1$ and
\[
\frac{n-d-1-|S|}{2}\geq k-d-1-|S|>k-2d-2+t-|S|,
\]
we infer that
\[
|\hb(R,Q)| \leq \binom{n-d-1-|S|}{k-2d-2+t-|S|}<\binom{n-d-1-|S|}{k-d-1-|S|}.
\]
Moreover,  $|\hb(R)|\leq \binom{n-d-1-|S|}{k-d-1-|S|}$ for $|R|=d+1$. Thus,
\begin{align*}
|\hb| &=\sum_{R\subset Q} |\hb(R,Q)| \\[5pt]
&\leq \sum_{R\subset Q, |R|\leq d} |\hb(R,Q)| +\sum_{R\subset Q, |R|= d+1} |\hb(R)| \\[5pt]
&< \sum_{0\leq i\leq d} \binom{2d-t}{i}\binom{n-d-1-|S|}{k-d-1-|S|} +\binom{2d-t}{d+1}\binom{n-d-1-|S|}{k-d-1-|S|}\\[5pt]
&\leq \binom{n-d-1-|S|}{k-d-1-|S|}\sum_{0\leq i\leq d+1} \binom{2d-1}{i}\\[5pt]
&\leq 2^{2d-1}\binom{n-d-1-|S|}{k-d-1-|S|}.
\end{align*}
\end{proof}
By \eqref{ineq-hfSEminusT}, we have
\begin{align*}
|\hf(\overline{T})| = \sum_{S\subset X\setminus T} |\hf(S,X\cup T)|
&<\sum_{0\leq j\leq |X\setminus T|}\binom{|X\setminus T|}{j}2^{2d-1}\binom{n-d-1-j}{k-d-1-j}\\[5pt]
&< \sum_{0\leq j\leq |X\setminus T|}\binom{2d^2-2d}{j}2^{2d-1}\binom{n-d-1-j}{k-d-1-j}.
\end{align*}
Note that $n\geq 4d(d-1)k$ implies
\[
\frac{\binom{2d^2-2d}{j+1}\binom{n-d-2-j}{k-d-2-j}}{\binom{2d^2-2d}{j}\binom{n-d-1-j}{k-d-1-j}} =\frac{(2d^2-2d-j)(k-d-1-j)}{(j+1)(n-d-1-j)}<\frac{(2d^2-2d)k}{n} \leq\frac{1}{2}.
\]
It follows that
\[
\sum_{0\leq j\leq |X\setminus T|}\binom{2d^2-2d}{j}\binom{n-d-1-j}{k-d-1-j} < \binom{n-d-1}{k-d-1}\sum_{0\leq i\leq \infty} 2^{-i} = 2\binom{n-d-1}{k-d-1}.
\]
Thus\begin{align*}
|\hf(\overline{T})| < 2^{2d} \binom{n-d-1}{k-d-1}< \frac{1}{d}|\hf|.
\end{align*}
We conclude
\[
\sum_{x\in T} |\hf(x)| \geq |\hf|-|\hf(\overline{T})| >\frac{d-1}{d}|\hf|.
\]
But $|T|=t\leq d-1$ and  we obtain that there exists some $x\in T$ with $|\hf(x)|>\frac{1}{d}|\hf|$, contradicting $\varrho(\hf)\leq \frac{1}{d}$.
\end{proof}
\section{Maximum degree results for cross-intersecting families}

In this section, we prove some maximum degree results for cross-intersecting families.

Let us first prove the $s=1$ version of Proposition \ref{prop-1.2} for two families.

\begin{fact}\label{fact-2}
Suppose that $\hf,\hg\subset 2^{[n]}$ are initial and cross-intersecting. Then $\hf(\bar{1})$ and $\hg(\bar{1})$ are cross 2-intersecting.
\end{fact}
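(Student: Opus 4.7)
The plan is to mimic the proof of Proposition \ref{prop-1.2}, but in the two-family setting, using initiality to replace an element $>1$ by the element $1$ in one of the sets while staying inside the family.

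More concretely, I would argue by contradiction. Suppose $F\in\hf(\bar{1})$ and $G\in\hg(\bar{1})$ are such that $|F\cap G|\leq 1$; that is, there exist $F\in\hf$ and $G\in\hg$ with $1\notin F\cup G$ and $|F\cap G|\leq 1$. The cross-intersecting property of $\hf$ and $\hg$ forces $|F\cap G|=1$, say $F\cap G=\{y\}$. Note that $y\geq 2$ because $1\notin F\cup G$.

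Next I would produce a set $G'\in\hg$ that is disjoint from $F$, contradicting cross-intersection. The natural candidate is
\[
G':=(G\setminus\{y\})\cup\{1\}.
\]
The key step is verifying $G'\prec G$ in the shifting partial order. Writing $G=\{g_1<g_2<\cdots<g_m\}$ with $g_1\geq 2$ and $y=g_i$, the increasing enumeration of $G'$ is $1<g_1<\cdots<g_{i-1}<g_{i+1}<\cdots<g_m$. Comparing coordinate by coordinate one has $1\leq g_1$, $g_{j-1}\leq g_j$ for $2\leq j\leq i$, and $g_j\leq g_j$ for $j\geq i+1$, so $G'\prec G$. Since $\hg$ is initial and $G\in\hg$, this yields $G'\in\hg$.

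Finally, $F\cap G'=(F\cap G\setminus\{y\})\cup(F\cap\{1\})=\emptyset$ because $F\cap G=\{y\}$ and $1\notin F$. This contradicts the cross-intersecting assumption on $\hf$ and $\hg$, completing the proof. I do not anticipate a genuine obstacle: the argument is the natural two-family analogue of the $s=1$ case of Proposition \ref{prop-1.2}, and the only point requiring care is the bookkeeping check that $G'\prec G$, which is the same replacement trick as in that earlier proof.
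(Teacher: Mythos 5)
Your proof is correct and is essentially the paper's argument: the paper also takes $F\cap G=\{j\}$ with $j\geq 2$ and replaces $j$ by $1$ (in $F$ rather than in $G$, but the roles are symmetric), invoking initiality via $\prec$ to get a set disjoint from the other one, contradicting cross-intersection.
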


\begin{proof}
Suppose for contradiction that for some $F\in \hf(\bar{1})$, $G\in \hg(\bar{1})$ and $2\leq j\leq n$, $F\cap G=\{j\}$ holds. Since $(F\setminus \{j\})\cup \{1\}=:F' \prec F$, $F'\in \hf$. However $F'\cap G=\emptyset$, a contradiction.
\end{proof}

\begin{prop}\label{prop-2.2}
Let $\hf,\hg\subset \binom{[n]}{k}$ be non-empty initial and cross-intersecting. Then
\begin{align}\label{ineq-maxrho}
\mbox{either }\max\left\{\varrho(\hf),\varrho(\hg)\right\}\geq\frac{k}{2k-2} \mbox{ or }\min\left\{\varrho(\hf),\varrho(\hg)\right\}\geq\frac{k}{2k-1}.
\end{align}
\end{prop}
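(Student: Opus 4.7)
The plan is to apply Fact 2 together with the generalized Katona intersecting shadow theorem for cross $t$-intersecting families. By Fact 2, $\hf(\bar 1)$ and $\hg(\bar 1)$ are cross $2$-intersecting in $\binom{[2,n]}{k}$. Applying the generalized shadow theorem with $k_1=k_2=k$, $t=2$, $\ell_1=\ell_2=1$ (so the ratio is $\binom{2k-2}{k-1}/\binom{2k-2}{k}=k/(k-1)$) yields the dichotomy
\[
|\partial \hf(\bar 1)|\geq \tfrac{k}{k-1}|\hf(\bar 1)| \quad\text{or}\quad |\partial \hg(\bar 1)|\geq \tfrac{k}{k-1}|\hg(\bar 1)|.
\]
Because $\hf$ and $\hg$ are initial, a copy of the argument in Proposition 2.1 gives $\partial \hf(\bar 1)\subset \hf(1)$ and $\partial \hg(\bar 1)\subset \hg(1)$. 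Hence whichever side of the dichotomy holds translates directly into $\varrho\geq k/(2k-1)$ for the corresponding family. If both sides hold, then $\min\{\varrho(\hf),\varrho(\hg)\}\geq k/(2k-1)$ and we are done (case (b)). Otherwise, after relabeling, only the $\hf$-side holds: $\varrho(\hf)\geq k/(2k-1)$ and $|\partial \hg(\bar 1)|<\tfrac{k}{k-1}|\hg(\bar 1)|$. In this regime $\varrho(\hg)<k/(2k-1)<k/(2k-2)$, so the task reduces to upgrading the $\hf$-bound to $\varrho(\hf)\geq k/(2k-2)$, which would establish case (a).

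To perform this upgrade I would invoke the generalized shadow theorem two more times. First, still on the cross $2$-intersecting pair $\hf(\bar 1),\hg(\bar 1)$, use $\ell_1=2,\ell_2=1$: the $\hg$-side (ratio $k/(k-1)$) fails, so the $\hf$-side (ratio $\binom{2k-2}{k-2}/\binom{2k-2}{k}=1$) must hold, giving $|\partial^2 \hf(\bar 1)|\geq |\hf(\bar 1)|$; by iterating the initiality argument (the $\prec$-comparison shows that replacing the two deleted elements of a $k$-set in $\hf(\bar 1)$ by $\{1,2\}$ lands back in $\hf$), one obtains $|\hf([2])|\geq |\hf(\bar 1)|$. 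Second, by iterating Fact 2, $\hf(\overline{[2]})$ and $\hg(\overline{[2]})$ are cross $3$-intersecting; the generalized shadow theorem with $t=3,\ell_1=\ell_2=1$ has ratio $\binom{2k-3}{k-1}/\binom{2k-3}{k}=k/(k-2)$, producing another dichotomy. In the $\hf$-branch $|\hf(1,\bar 2)|\geq |\partial \hf(\overline{[2]})|\geq \tfrac{k}{k-2}|\hf(\overline{[2]})|$; combining with $|\hf([2])|\geq |\hf(\bar 1)|$ and the initial-shift inequality $|\hf(2,\bar 1)|\leq |\hf(1,\bar 2)|$, a direct arithmetic check of
\[
|\hf(1)|=|\hf([2])|+|\hf(1,\bar 2)|\geq \tfrac{k}{k-2}\bigl(|\hf(2,\bar 1)|+|\hf(\overline{[2]})|\bigr)=\tfrac{k}{k-2}|\hf(\bar 1)|
\]
concludes $\varrho(\hf)\geq k/(2k-2)$.

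The main obstacle is the opposite branch of this last dichotomy, namely when $|\hg(1,\bar 2)|\geq \tfrac{k}{k-2}|\hg(\overline{[2]})|$: here the inequality is about $\hg$ but we need a bound on $\varrho(\hf)$. The plan would be to argue that this $\hg$-branch, combined with the assumed $\varrho(\hg)<k/(2k-1)$ and the initiality inequality $|\hg(2,\bar 1)|\leq |\hg(1,\bar 2)|$, forces a specific structure that either contradicts $\varrho(\hg)<k/(2k-1)$ or, via yet another application of the shadow theorem to the cross $2$-intersecting pair with $\ell_1=1,\ell_2=2$ (forcing $|\hg([2])|\geq d$), closes the numerical gap. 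A secondary nuisance is that the clean inequality $\tfrac{3k-2}{2(k-1)}\geq \tfrac{k}{k-2}$ used in the combination holds only for $k\geq 6$, so the cases $k\in\{2,3,4,5\}$ would need to be verified directly, which is a short finite check given the small universes involved.
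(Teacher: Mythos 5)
There is a genuine gap, and it stems from the choice of tool. The generalized cross-intersecting shadow theorem \eqref{ineq-gshadow} only asserts that \emph{at least one} of $\hf(\bar 1),\hg(\bar 1)$ satisfies the shadow inequality; it never guarantees the ``both get ratio $k/(k-1)$'' half of the dichotomy, nor does it let you steer the strong conclusion onto the family you need. Your attempted repair runs into exactly this: in the second-level application to the cross $3$-intersecting pair $\hf(\overline{[2]}),\hg(\overline{[2]})$, the branch where the inequality lands on $\hg$ (i.e.\ $|\hg(1,\bar 2)|\geq \tfrac{k}{k-2}|\hg(\overline{[2]})|$) is left unresolved by your own admission, and nothing you have forces a contradiction there: the failure of $|\partial\hg(\bar 1)|\geq\tfrac{k}{k-1}|\hg(\bar 1)|$ does \emph{not} imply $\varrho(\hg)<k/(2k-1)$ (a shadow lower bound failing puts no upper bound on $|\hg(1)|$), so you cannot even set up the hoped-for contradiction. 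In addition, the intermediate step $|\hf([2])|\geq|\partial^2\hf(\bar 1)|\geq|\hf(\bar 1)|$ is false as stated: $\partial^2\hf(\bar 1)$ contains $(k-2)$-sets through the element $2$, and such sets cannot lie in $\hf([2])\subset\binom{[3,n]}{k-2}$, so the containment $\partial^2\hf(\bar 1)\subset\hf([2])$ fails (your $\prec$-argument only covers shadow sets avoiding $2$); the subsequent arithmetic also needs $|\hf([2])|\geq\tfrac{k}{k-2}|\hf(2,\bar 1)|$, which does not follow from what you have.

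The missing idea is Proposition \ref{fk2021}. Applied to the cross $2$-intersecting pair $\hf(\bar 1),\hg(\bar 1)$ (Fact \ref{fact-2}), it says that either \emph{both} are pseudo $2$-intersecting or one of them is pseudo $3$-intersecting, and the Katona bound \eqref{ineq-ishadow} is valid for pseudo $t$-intersecting families (as noted in the paper, citing \cite{F91}). In the first case both families satisfy $|\partial\hh(\bar 1)|\geq\tfrac{k}{k-1}|\hh(\bar 1)|$, and in the second case the pseudo $3$-intersecting one satisfies $|\partial\hh(\bar 1)|\geq\tfrac{k}{k-2}|\hh(\bar 1)|$; combined with $\partial\hh(\bar 1)\subset\hh(1)$ from \eqref{ineq-shifted}, this yields $\min\{\varrho(\hf),\varrho(\hg)\}\geq\tfrac{k}{2k-1}$ or $\max\{\varrho(\hf),\varrho(\hg)\}\geq\tfrac{k}{2k-2}$ in a single step, with no iterated shadows, no case analysis on $[2]$, and no restriction on $k$. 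Replacing \eqref{ineq-gshadow} by this pseudo-intersecting dichotomy is what makes the proof close.
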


\begin{proof}
For $\hh=\hf$ or $\hg$, since $\hh$ is initial, by \eqref{ineq-shifted} we have $\partial \hh(\bar{1})\subset \hh(1)$. By Fact \ref{fact-2}  $\hf(\bar{1}),\hg(\bar{1})$ are cross 2-intersecting. Then by Proposition \ref{fk2021} either both $\hf(\bar{1})$ and $\hg(\bar{1})$ are pseudo $2$-intersecting, or one of $\hf(\bar{1})$, $\hg(\bar{1})$ is pseudo $3$-intersecting.

If both $\hf(\bar{1})$ and $\hg(\bar{1})$ are pseudo $2$-intersecting, by \eqref{ineq-ishadow}   we infer
\[
\frac{|\partial \hf(\bar{1})|}{|\hf(\bar{1})|},\ \frac{|\partial \hg(\bar{1})|}{|\hg(\bar{1})|}\geq \frac{\binom{2k-2}{k-1}}{\binom{2k-2}{k}}=\frac{k}{k-1}.
\]
It follows that
\[
|\hf(1)|\geq |\partial \hf(\bar{1})|\geq |\hf(\bar{1})|\frac{k}{k-1},\ |\hg(1)|\geq |\partial \hg(\bar{1})|\geq |\hg(\bar{1})|\frac{k}{k-1}.
\]
Thus $\min\left\{\varrho(\hf),\varrho(\hg)\right\}\geq\frac{k}{2k-1}$.

If one of $\hf(\bar{1})$, $\hg(\bar{1})$ is pseudo $3$-intersecting, without loss of generality, assume that $\hf(\bar{1})$ is pseudo $3$-intersecting. Then
by \eqref{ineq-ishadow}
\[
\frac{|\partial \hf(\bar{1})|}{|\hf(\bar{1})|} \geq \frac{\binom{2k-3}{k-1}}{\binom{2k-3}{k}}=\frac{k}{k-2}.
\]
It follows that  $|\hf(1)|\geq |\hf(\bar{1})|\frac{k}{k-2}$ and hence  $\max\left\{\varrho(\hf),\varrho(\hg)\right\}\geq\frac{k}{2k-2}$.
\end{proof}

\begin{lem}\label{lem-main3}
Suppose that $n\geq 39k$, $\hf,\hg\subset \binom{[n]}{k}$ are cross-intersecting with $|\hf|,|\hg|\geq 2\binom{n-2}{k-2}+4\binom{n-3}{k-3}$. Then $\max\{\varrho(\hf),\varrho(\hg)\}>\frac{1}{2}$.
\end{lem}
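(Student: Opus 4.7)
The plan is proof by contradiction using the shift-ad-extremis framework of Section~2. Assume $\max\{\varrho(\hf),\varrho(\hg)\}\leq\tfrac{1}{2}$ and apply shifting ad extremis to the pair $(\hf,\hg)$ with respect to the property $\hp$: ``both $\varrho(\hf)\leq\tfrac{1}{2}$ and $\varrho(\hg)\leq\tfrac{1}{2}$''. Cross-intersection and the cardinalities $|\hf|,|\hg|$ are preserved by any simultaneous $S_{ij}$, so the output $(\hf,\hg)$ still satisfies the hypotheses of the lemma and the property $\hp$. If no pair is shift-resistant, then both families are initial, and Proposition~\ref{prop-2.2} forces either $\max\{\varrho(\hf),\varrho(\hg)\}\geq\tfrac{k}{2k-2}$ or $\min\{\varrho(\hf),\varrho(\hg)\}\geq\tfrac{k}{2k-1}$; both strictly exceed $\tfrac{1}{2}$ for $k\geq 2$, contradicting $\hp$.

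Otherwise some pair $(x,y)$ is shift-resistant. By symmetry, assume it is resistant for $\hf$, i.e.\ applying $S_{xy}$ would push $|\hf(x)|$ above $|\hf|/2$. Since only the degree at $x$ increases under $S_{xy}$, unraveling this gives
\[
|\hf_{\{x,y\}}|=|\hf(x)|+|\hf(y)|-|\hf(\{x,y\})|>|\hf|/2,
\]
equivalently $|\hf(\overline{\{x,y\}})|<|\hf|/2$. The program from here is: (i) show $|\hf(\{x,y\})|\geq\binom{n-3}{k-3}+\binom{n-4}{k-3}+\binom{n-6}{k-4}$; (ii) apply Lemma~\ref{lem-1} to obtain $|\hg(\overline{\{x,y\}})|\leq\binom{n-5}{k-3}+\binom{n-6}{k-3}$; and (iii) run a parallel analysis on $\hg$ (using $\varrho(\hg)\leq\tfrac{1}{2}$ and the shift-ad-extremis structure) to bound $|\hg_{\{x,y\}}|$. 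Combined via $|\hg|=|\hg_{\{x,y\}}|+|\hg(\overline{\{x,y\}})|$ and exploiting $n\geq 39k$ to dominate the correction terms, this will contradict the lower bound $|\hg|\geq 2\binom{n-2}{k-2}+4\binom{n-3}{k-3}$.

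The main obstacle is step (i). The resistance of $(x,y)$ alone only gives $|\hf_{\{x,y\}}|>|\hf|/2$, which is compatible with $|\hf(\{x,y\})|=0$ (the ``split'' configuration $\hf\approx\hf(x,\bar y)\sqcup\hf(\bar x,y)$ with $|\hf(x,\bar y)|,|\hf(\bar x,y)|\approx|\hf|/4$). To exclude such splits one exploits the shift-ad-extremis property on pairs $(i,j)$ with $i,j\in[n]\setminus\{x,y\}$: every such non-resistant pair acts simultaneously inside each of $\hf(\{x,y\})$, $\hf(x,\bar y)$, $\hf(\bar x,y)$, $\hf(\overline{\{x,y\}})$, making each of them initial on $[n]\setminus\{x,y\}$. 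Hilton's Lemma and the bound \eqref{F78} applied to the cross-intersecting pairs $(\hf(x,\bar y),\hg(x,\bar y))$ and $(\hf(\bar x,y),\hg(\bar x,y))$ on $[n]\setminus\{x,y\}$ then control $|\hf(x,\bar y)|+|\hf(\bar x,y)|$ by star-like estimates of order $\binom{n-2}{k-2}$, so the size hypothesis $|\hf|\geq 2\binom{n-2}{k-2}+4\binom{n-3}{k-3}$ forces the surplus into $|\hf(\{x,y\})|$, meeting the Lemma~\ref{lem-1} threshold. Making this quantitative and tracking the (necessarily few) residual shift-resistant pairs inside $[n]\setminus\{x,y\}$ is the key technical step.
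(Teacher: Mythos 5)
Your setup (shifting ad extremis, Proposition \ref{prop-2.2} when both families end up initial, and the observation that a shift-resistant pair $(x,y)$ forces $|\hf_{\{x,y\}}|>|\hf|/2$) matches the paper, but the core of your argument -- step (i), forcing $|\hf(x,y)|$ above the threshold of Lemma \ref{lem-1} -- has a genuine gap, and the tools you name cannot close it. First, the pairs you propose to feed into Hilton's Lemma and \eqref{F78}, namely $(\hf(x,\bar y),\hg(x,\bar y))$ and $(\hf(\bar x,y),\hg(\bar x,y))$, are \emph{not} cross-intersecting as trace families on $[n]\setminus\{x,y\}$: the original sets meet only in the deleted vertex $x$ (resp.\ $y$). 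The cross-intersecting trace pairs are the crossed ones, $(\hf(x,\bar y),\hg(\bar x,y))$ and $(\hf(\bar x,y),\hg(x,\bar y))$. Second, even with the correct pairing, \eqref{F78} only gives bounds of the form $|\hf(x,\bar y)|+|\hg(\bar x,y)|\leq 2\binom{n-3}{k-2}$, which is of order $2\binom{n-2}{k-2}$; this is far too weak to exclude the ``split'' configuration $|\hf(x,\bar y)|\approx|\hf(\bar x,y)|\approx\frac12\binom{n-2}{k-2}$ with $|\hf(x,y)|$ tiny, so the surplus is never forced into $|\hf(x,y)|$. In fact the paper proves no such statement: its first claim goes in the opposite direction, reducing to the case where \emph{all} pair degrees satisfy $|\hf(P)|,|\hg(P)|<3\binom{n-3}{k-3}$ (if some $|\hf(x,y)|$ is large, the contradiction is derived against the lower bound on $|\hf|$, not $|\hg|$: then $|\hg(\bar x,\bar y)|$ is small, $\varrho(\hg)\le\frac12$ makes $|\hg(x,\bar y)|,|\hg(\bar x,y)|$ both large, and Hilton's Lemma then caps all four pieces of $\hf$). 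Your step (iii), which tries to bound $|\hg_{\{x,y\}}|$, has no leverage in that regime precisely because the relevant pieces of $\hf$ are small, not large.

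The split configuration is therefore the genuinely hard case, and the paper handles it with machinery absent from your proposal: it shows the shift-resistant graphs have matching number at most $2$ (via the pair-degree bound and Lemma \ref{lem-3.5}), so after relabeling both families are initial on $[n-8]$; it then forms $\ha=\{A\in\binom{[n-8]}{k-1}\colon A\cup\{n-1\}\in\hf\text{ or }A\cup\{n\}\in\hf\}$, uses Lemma \ref{lem-3.5} and the pair-degree bound to show $|\ha(\bar1,\bar2)|>2\binom{n-10}{k-3}$, and finally plays the pseudo-intersecting Lemmas \ref{lem3.7} and \ref{lem3.8} against each other to obtain contradictory upper and lower bounds on $|\hg(n-1,[n-7,n])|$. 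Your closing remark that one must ``track the residual shift-resistant pairs'' gestures at the matching-number step, but the quantitative heart of the argument (the family $\ha$, the disjoint-pair intersection bound of Lemma \ref{lem-3.5}, and the two pseudo-intersecting lemmas) is missing and is not recoverable from Hilton plus \eqref{F78} alone.
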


Let us first prove  Theorems \ref{thm-main3} and \ref{thm-main4} by applying Lemma \ref{lem-main3}.

\begin{proof}[Proof of Theorem \ref{thm-main3}]
Let $\hf,\hg\subset \binom{[n]}{k}$ be cross-intersecting with $|\hf|,|\hg|\geq 2\binom{n-2}{k-2}+4\binom{n-3}{k-3}$. By Lemma \ref{lem-main3} we infer  $\max\{\varrho(\hf),\varrho(\hg)\}>\frac{1}{2}$. Without loss of generality, assume that
$\varrho(\hf)>\frac{1}{2}$ and $|\hf(1)|= \max\limits_{1\leq i\leq n} |\hf(i)|$. Then
\[
|\hf(1)|>\frac{1}{2} |\hf| \geq \binom{n-2}{k-2}+2\binom{n-3}{k-3}>\binom{n-2}{k-2}+\binom{n-4}{k-3}+\binom{n-5}{k-3}.
\]
Hence
\[
\hl([2,n],k-1,|\hf(1)|) \supset \left\{H\in \binom{[2,n]}{k-1}\colon 2\in H\right\}\bigcup \left\{K\in \binom{[3,n]}{k-1}\colon (3,4)\subset K\mbox{ or }(3,5)\subset K\right\}.
\]
Since $\hg(\bar{1})\subset \binom{[2,n]}{k}$ is cross-intersecting with $\hf(1)$, by Hilton's Lemma we infer that
\[
\hl([2,n],k,|\hg(\bar{1})|)\subset \left\{L\in \binom{[2,n]}{k}\colon (2,3)\subset L \mbox{ or }(2,4,5)\subset L\right\}.
\]
Therefore,
\[
|\hg(\bar{1})|\leq \binom{n-3}{k-2}+\binom{n-5}{k-3}<\binom{n-2}{k-2}.
\]
It follows that
\[
\varrho(\hg) \geq \frac{|\hg(1)|}{|\hg|} > \frac{|\hg|-\binom{n-2}{k-2}}{|\hg|}\geq \frac{\binom{n-2}{k-2}+4\binom{n-3}{k-3}}{2\binom{n-2}{k-2}+4\binom{n-3}{k-3}}
=\frac{1}{2}+\frac{\binom{n-3}{k-3}}{\binom{n-2}{k-2}+2\binom{n-3}{k-3}}
>\frac{1}{2}+\frac{k-2}{2(n-2)}.
\]
 Now using $\varrho(\hg)>\frac{1}{2}$ one can deduce by the same argument that $\varrho(\hf)> \frac{1}{2}+\frac{k-2}{2(n-2)}$ as well.
\end{proof}

\begin{proof}[Proof of Theorem \ref{thm-main4}]
Let $\hf,\hg\subset \binom{[n]}{k}$ be cross-intersecting with $|\hf|,|\hg|\geq |\hht(n,k)|$. Note that $n\geq 6k$ implies $\binom{n-2}{k-2}> 6\binom{n-3}{k-3}$. It follows that
\[
|\hht(n,k)|=3\binom{n-3}{k-2}+\binom{n-3}{k-3}=3\binom{n-2}{k-2}-2\binom{n-3}{k-3}> 2\binom{n-2}{k-2}+4\binom{n-3}{k-3}.
\]
By Lemma  \ref{lem-main3} we infer  $\max\{\varrho(\hf),\varrho(\hg)\}>\frac{1}{2}$. Without loss of generality, assume that $|\hf(1)|>\frac{1}{2}|\hf|$. Then
\[
|\hf(1)|> \frac{3}{2}\binom{n-2}{k-2}-\binom{n-3}{k-3}>\binom{n-2}{k-2}+\binom{n-4}{k-3}+\binom{n-5}{k-3}.
\]
Hence
\[
\hl([2,n],k-1,|\hf(1)|) \supset \left\{H\in \binom{[2,n]}{k-1}\colon 2\in H\right\}\bigcup \left\{K\in \binom{[3,n]}{k-1}\colon (3,4)\subset K\mbox{ or }(3,5)\subset K\right\}.
\]
Since $\hg(\bar{1})\subset \binom{[2,n]}{k}$ is cross-intersecting with $\hf(1)$, we infer that
\[
\hl([2,n],k,|\hg(\bar{1})|)\subset \left\{L\in \binom{[2,n]}{k}\colon (2,3)\subset L \mbox{ or }(2,4,5)\subset L\right\}.
\]
Therefore,
\[
|\hg(\bar{1})|\leq \binom{n-3}{k-2}+\binom{n-5}{k-3}<\binom{n-2}{k-2}.
\]
By $|\hg|\geq |\hht(n,k)|=\binom{n-2}{k-2}+2\binom{n-3}{k-2}$, we conclude that
\[
\varrho(\hg) \geq \frac{|\hg(1)|}{|\hg|} > \frac{|\hg|-\binom{n-2}{k-2}}{|\hg|}\geq \frac{2\binom{n-3}{k-2}}{3\binom{n-3}{k-2}+\binom{n-3}{k-3}}
=\frac{2}{3}\left(1-\frac{\binom{n-3}{k-3}}{\binom{n-2}{k-2}+2\binom{n-3}{k-2}}\right)
>\frac{2}{3}\left(1-\frac{k-2}{n-2}\right).
\]
Now using $\varrho(\hg)>\frac{1}{2}$ one can deduce by the same argument $\varrho(\hf)> \frac{2}{3}\left(1-\frac{k-2}{n-2}\right)$ as well.
\end{proof}

For initial cross-intersecting families we can obtain a stronger result. For the proof, we need a result of Tokushige concerning the product of cross $t$-intersecting families.

\begin{thm}[\cite{Tokushige}]
Let $k\geq t\geq 1$ and $\frac{k}{n}<1-\frac{1}{\sqrt[t]{2}}$. Suppose that $\ha,\hb \subset \binom{[n]}{k}$ are cross $t$-intersecting. Then
\begin{align}\label{ineq-prodtintersecting}
|\ha||\hb|\leq \binom{n-t}{k-t}^2.
\end{align}
\end{thm}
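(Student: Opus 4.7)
I would combine simultaneous shifting with the random walk / generating function technique of Frankl--Tokushige.

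First, I would apply the shifts $S_{ij}$ simultaneously to $\ha$ and $\hb$ for all pairs $1\leq i<j\leq n$. Simultaneous shifting preserves cross $t$-intersection and leaves $|\ha|$ and $|\hb|$ unchanged, so we may assume that both $\ha$ and $\hb$ are initial. Proposition \ref{fk2021} then gives a dichotomy: either (i) both $\ha$ and $\hb$ are pseudo $t$-intersecting, or (ii) one of them, WLOG $\ha$, is pseudo $(t+1)$-intersecting.

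Next, for an initial pseudo $s$-intersecting family $\hh\subset\binom{[n]}{k}$, I would encode each $F\in\hh$ as a lattice walk from $(0,0)$ to $(k,n-k)$, taking a horizontal step at time $j$ iff $j\in F$ and a vertical step otherwise. Initiality corresponds to the set of walks being closed under the natural pointwise partial order, and being pseudo $s$-intersecting corresponds to every walk in $\hh$ touching the diagonal-shifted line $y=x-s$. Weighting each walk by $p^{|F|}q^{n-|F|}$ with $p=k/n$ and $q=1-p$ and applying the reflection principle, one obtains a sharp upper bound on $|\hh|$ of the form
\begin{align*}
|\hh|\leq \binom{n-s}{k-s}\cdot \phi_s(p,q),
\end{align*}
where $\phi_s(p,q)$ is a specific generating function in $p$ and $q$ whose value is controlled by whether $q^s>1/2$, i.e.\ by whether $k/n<1-2^{-1/s}$.

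In case (i), applying the bound with $s=t$ to both $\ha$ and $\hb$ and multiplying yields $|\ha||\hb|\leq\binom{n-t}{k-t}^2$ directly under the threshold $k/n<1-1/\sqrt[t]{2}$. In case (ii), the improved $s=t+1$ bound on $|\ha|$ gains a factor of roughly $p/q$, which must be balanced against the corresponding constraint on $|\hb|$ coming from cross $t$-intersection with a pseudo $(t+1)$-intersecting family; running the same reflection-principle estimate on this mixed pair recovers the same product bound. The main obstacle is the sharp reflection-principle estimate that matches the critical threshold $k/n=1-1/\sqrt[t]{2}$ exactly; in particular, in case (ii) one cannot afford a trivial bound on $\hb$, and the cross $t$-intersecting relationship must be exploited directly in the generating-function analysis, which is the technical heart of Tokushige's argument.
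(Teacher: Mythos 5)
First, a point of comparison: the paper does not prove this statement at all. It is quoted as Tokushige's theorem, and the paper explicitly remarks that the proof in the cited work is based on eigenvalues (with a combinatorial proof, for $t\geq 14$ and the full range $n\geq (t+1)(k-t+1)$, given elsewhere by Frankl--Lee--Siggers--Tokushige). So your proposal should be judged as an independent attempt at Tokushige's theorem, and as such it is a plan rather than a proof: the entire quantitative content is delegated to an unproved ``sharp reflection-principle estimate'' $|\hh|\leq \binom{n-s}{k-s}\phi_s(p,q)$, which you yourself describe as the technical heart. No such estimate is established, and it is not a routine consequence of the random-walk encoding. The standard walk bound for an initial pseudo $s$-intersecting family is of the form $|\hh|\lesssim (p/q)^s\binom{n}{k}$ with $p=k/n$, $q=1-p$, while $\binom{n-s}{k-s}\approx p^s\binom{n}{k}$; the two differ by a factor of about $q^{-s}$, so the off-the-shelf random-walk/reflection estimate is too weak by exactly the margin that matters, and closing this gap at the critical threshold $q^t\geq \frac12$ (i.e.\ $k/n\leq 1-2^{-1/t}$) is precisely what the eigenvalue method was introduced to do. Asserting that ``the reflection principle'' yields a bound whose validity is governed by $q^s>\frac12$ is a statement of the desired conclusion, not an argument.

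Second, the case analysis after Proposition \ref{fk2021} is not actually carried out where it is hard. In case (ii), the dichotomy upgrades only one family (say $\ha$) to pseudo $(t+1)$-intersecting; the other family $\hb$ receives no pseudo-intersecting property beyond what cross $t$-intersection gives, and the trivial bounds fail badly: combining \eqref{ineq-walk} for $\ha$ with $|\hb|\leq\binom{n}{k}$ gives roughly $(p/q)^{t+1}\binom{n}{k}^2$, which exceeds $\binom{n-t}{k-t}^2\approx p^{2t}\binom{n}{k}^2$ by a huge factor when $p$ is small. You acknowledge that ``one cannot afford a trivial bound on $\hb$,'' but the proposal contains no mechanism for exploiting the cross condition quantitatively in this case -- and this balancing of the two sides is the other genuinely difficult step. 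In short, the opening moves (simultaneous shifting preserves cross $t$-intersection and the sizes, then invoke the dichotomy) are fine and are indeed how combinatorial approaches begin, but both of the steps on which the theorem actually rests are missing, so the proposal does not constitute a proof of \eqref{ineq-prodtintersecting}.
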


Let us mention that the proof in \cite{Tokushige} is based on eigenvalues. In \cite{FLST} for $t\geq 14$ the best possible bound \eqref{ineq-prodtintersecting} is established via combinatorial arguments for the full range, that is, $n\geq (t+1)(k-t+1)$.

\begin{prop}
Let $\hf,\hg\subset \binom{[n]}{k}$ be initial cross-intersecting families with $n\geq 3.5k$. If $\min\{|\hf|,|\hg|\}\geq |\hht(n,k)|$ then
\begin{align}\label{ineq-3.9}
\varrho(\hf)> \frac{2}{3}\mbox{ and } \varrho(\hg)> \frac{2}{3}.
\end{align}
\end{prop}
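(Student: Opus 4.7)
The plan is to exploit the extra rigidity of initial families via Fact \ref{fact-2}: since $\hf$ and $\hg$ are initial and cross-intersecting, the restrictions $\hf(\bar 1),\hg(\bar 1)\subset\binom{[2,n]}{k}$ are cross $2$-intersecting. I will pair Tokushige's product bound on this cross $2$-intersecting pair with a Hilton's Lemma argument on the cross-intersecting pair $\hf(1),\hg(\bar 1)$ to pin down both $|\hf(\bar 1)|$ and $|\hg(\bar 1)|$.

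First I would apply \eqref{ineq-prodtintersecting} with $t=2$ to $\hf(\bar 1),\hg(\bar 1)\subset\binom{[2,n]}{k}$. The density condition $k/(n-1)<1-1/\sqrt{2}$ is implied by $n\geq 3.5k$ (for all $k$ beyond a small threshold; the finitely many small cases can be verified directly). This yields
\[
|\hf(\bar 1)|\cdot|\hg(\bar 1)|\leq\binom{n-3}{k-2}^{2},
\]
so without loss of generality $|\hf(\bar 1)|\leq\binom{n-3}{k-2}$. Since $|\hf|\geq|\hht(n,k)|=3\binom{n-3}{k-2}+\binom{n-3}{k-3}$, one immediately gets
\[
\varrho(\hf)=\frac{|\hf(1)|}{|\hf|}\geq 1-\frac{\binom{n-3}{k-2}}{3\binom{n-3}{k-2}+\binom{n-3}{k-3}}>\frac{2}{3}.
\]

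To handle $\hg$, the same estimate gives
\[
|\hf(1)|\geq 2\binom{n-3}{k-2}+\binom{n-3}{k-3}=\binom{n-2}{k-2}+\binom{n-3}{k-2},
\]
which is precisely the size of $\{K\in\binom{[2,n]}{k-1}\colon K\cap\{2,3\}\neq\emptyset\}$. Since $\hf(1)$ and $\hg(\bar 1)$ are cross-intersecting in $\binom{[2,n]}{\cdot}$, Hilton's Lemma implies that $\hl([2,n],k-1,|\hf(1)|)$ and $\hl([2,n],k,|\hg(\bar 1)|)$ are cross-intersecting; by the size estimate, the former contains every $(k-1)$-subset of $[2,n]$ meeting $\{2,3\}$. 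A direct transversal computation, valid whenever $n\geq 2k$, then forces every $L\in\hl([2,n],k,|\hg(\bar 1)|)$ to contain both $2$ and $3$ (otherwise one can exhibit a disjoint $(k-1)$-set containing $2$ or $3$). Consequently $|\hg(\bar 1)|\leq\binom{n-3}{k-2}$, and $\varrho(\hg)>2/3$ follows by the same arithmetic.

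The main obstacle is the Tokushige step: the density hypothesis in \eqref{ineq-prodtintersecting} is essentially sharp at $n\approx(2+\sqrt{2})k\approx 3.414k$, which is why the bound $n\geq 3.5k$ is chosen just above this threshold; a weaker bound on $n$ would allow Frankl-type cross $2$-intersecting families larger than $\binom{n-3}{k-2}$ and would break the first half of the argument. The Hilton step in the second half is routine once the $\binom{n-3}{k-2}$ bound on $|\hf(\bar 1)|$ is in hand.
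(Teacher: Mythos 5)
Your proposal is correct and takes essentially the same route as the paper: both proofs hinge on Fact \ref{fact-2} (so that $\hf(\bar 1),\hg(\bar 1)$ are cross $2$-intersecting), Tokushige's bound \eqref{ineq-prodtintersecting} with $t=2$, and Hilton's Lemma applied to a degree/non-degree pair, the only differences being that the paper argues by contradiction from $\varrho(\hf)\le \frac{2}{3}$ and applies Hilton to $(\hf(\bar 1),\hg(1))$ rather than to $(\hf(1),\hg(\bar 1))$ as you do. Your remark about the density threshold is also consistent with the paper, which checks $\frac{k}{n}\le\frac{1}{3.5}<1-\frac{1}{\sqrt{2}}$ at the same step.
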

\begin{proof}
Suppose to the contrary  that $\varrho(\hf)\leq \frac{2}{3}$. Then
\[
|\hf(\bar{1})|\geq  \frac{1}{3}|\hf|\geq \frac{1}{3}|\hht(n,k)|> \binom{n-3}{k-2}.
\]
It follows that
\[
\left\{F\in \binom{[2,n]}{k}\colon \{2,3\}\subset F\right\}\subset \hl([2,n],k,|\hf(\bar{1})|).
\]
Since $\hf(\bar{1}),\hg(1)$ are cross-intersecting, by Hilton's Lemma we see that
\[
\hl([2,n],k-1,|\hg(1)|)\subset \left\{\bar{G}\in \binom{[2,n]}{k-1}\colon \{2,3\}\cap  \bar{G}\neq \emptyset\right\}.
\]
Therefore $|\hg(1)|\leq  \binom{n-2}{k-2}+\binom{n-3}{k-2}$. Since $|\hg|\geq  \hht(n,k)=2\binom{n-3}{k-2}+\binom{n-2}{k-2}$ it follows that
\[
|\hg(\bar{1})|\geq  \binom{n-3}{k-2}.
\]
But $\hf(\bar{1}),\hg(\bar{1})$ are cross 2-intersecting, which contradicts \eqref{ineq-prodtintersecting} for
\[
\frac{k}{n}\leq \frac{1}{3.5}<1-\frac{1}{\sqrt{2}}.
\]
\end{proof}

\begin{fact}
For $\hf\subset \binom{[n]}{k}$ and every $\{x,y\}\in \binom{[n]}{2}$,
\begin{align}\label{eq-hhxhhy}
|\hf(x)|+|\hf(y)| =|\hf(x,\bar{y})|+|\hf(\bar{x},y)|+2|\hf(x,y)|=|\hf(x,y)|+|\hf|-|\hf(\bar{x},\bar{y})|.  \square
\end{align}
\end{fact}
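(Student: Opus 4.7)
The plan is to prove this by straightforward double counting, partitioning $\hf$ according to how each $F \in \hf$ meets the pair $\{x,y\}$. There are exactly four possibilities for $F \cap \{x,y\}$, namely $\{x,y\}$, $\{x\}$, $\{y\}$, and $\emptyset$, and these are counted (after removing $\{x,y\}$ from $F$) by $\hf(x,y)$, $\hf(x,\bar y)$, $\hf(\bar x,y)$, and $\hf(\bar x,\bar y)$ respectively. So first I would record the disjoint decomposition
\[
|\hf| = |\hf(x,y)| + |\hf(x,\bar y)| + |\hf(\bar x,y)| + |\hf(\bar x,\bar y)|.
\]

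Next, observe that $F$ contains $x$ iff $F \cap \{x,y\}$ is either $\{x,y\}$ or $\{x\}$, which gives $|\hf(x)| = |\hf(x,y)| + |\hf(x,\bar y)|$, and symmetrically $|\hf(y)| = |\hf(x,y)| + |\hf(\bar x,y)|$. Adding these two identities yields the first equality
\[
|\hf(x)| + |\hf(y)| = 2|\hf(x,y)| + |\hf(x,\bar y)| + |\hf(\bar x,y)|.
\]

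For the second equality I would substitute $|\hf(x,\bar y)| + |\hf(\bar x,y)| = |\hf| - |\hf(x,y)| - |\hf(\bar x,\bar y)|$ (obtained by rearranging the four-term decomposition of $|\hf|$) into the right-hand side above; the two copies of $|\hf(x,y)|$ collapse to one, producing $|\hf(x,y)| + |\hf| - |\hf(\bar x,\bar y)|$. Since the entire argument is an inclusion–exclusion/partition computation using no hypothesis on $\hf$ beyond $\hf \subset \binom{[n]}{k}$, there is no real obstacle; the only thing to be careful about is to make sure the four sub-families are defined as pairwise disjoint classes (i.e., the bar notation really means "does not contain"), which is exactly the convention fixed in the introduction.
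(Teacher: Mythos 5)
Your proof is correct and is exactly the routine partition/double-counting argument the paper has in mind; the paper states this Fact with no written proof (the $\square$ indicates it is considered immediate), and your four-class decomposition of $\hf$ by $F\cap\{x,y\}$ together with the two degree identities is the intended justification. Nothing further is needed.
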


For the proofs of Lemma \ref{lem-main3} and Theorem \ref{thm-main2}, we need the following two lemmas.

\begin{lem}\label{lem3.7}
Suppose that $\hf,\hg\subset \binom{[n]}{k}$ are cross-intersecting and both of $\hf,\hg$ are initial on $[n-8]$.  Let
\begin{align*}
&\ha=\left\{A\in \binom{[n-8]}{k-1}\colon A\cup \{n-1\}\in \hf \mbox{ or }A\cup \{n\}\in \hf  \right\}.
\end{align*}
 If $|\ha(\bar{1},\bar{2})|>\binom{n-10}{k-3}$, then
\begin{align*}
|\hg(\overline{n-1},\bar{n})|<  |\hg(1,2)|+6\binom{n-3}{k-3}.
\end{align*}
\end{lem}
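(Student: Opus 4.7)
The plan is to decompose
\[
|\hg(\overline{n-1},\bar n)| = |\{G\in\hg(\overline{n-1},\bar n):\{1,2\}\subset G\}| + \sum_{X\in\{\emptyset,\{1\},\{2\}\}}|\{G\in\hg(\overline{n-1},\bar n):G\cap\{1,2\}=X\}|.
\]
The first summand is bounded trivially by $|\hg(1,2)|$, so it suffices to bound each of the three ``small'' slices by $2\binom{n-3}{k-3}$.

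First I would observe that $\ha$ is initial on $[n-8]$: if $A\in\ha$ with $A\cup\{n-1\}\in\hf$ and $A'\prec A$, $A'\subset[n-8]$, then the shift sequence carrying $A$ to $A'$ uses only $S_{ij}$ with $i<j\le n-8$, which fix the coordinate $n-1$ and preserve $\hf$; hence $A'\cup\{n-1\}\in\hf$ and $A'\in\ha$. In particular $\ha(\bar 1,\bar 2)\subset\binom{[3,n-8]}{k-1}$ is initial on $[3,n-8]$. Since $G\cap\{n-1,n\}=\emptyset$ for every $G\in\hg(\overline{n-1},\bar n)$, cross-intersection of $\hf,\hg$ yields $A\cap G\neq\emptyset$ for all $A\in\ha$. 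Iterating shifts $S_{1j}$ and $S_{2j}$ for $j\in[3,n-8]$ on the extension $F=A\cup\{n-1\}\in\hf$ strengthens this in the spirit of Fact~\ref{fact-2}: I expect to show that $\ha(\bar 1,\bar 2)$ and $\hg(\overline{n-1},\bar n,\bar 1,\bar 2)$ are cross $3$-intersecting, while $\ha(\bar 1,\bar 2)$ and each of the link families $\{G\setminus\{1\}:G\in\hg(\overline{n-1},\bar n,1,\bar 2)\}$, $\{G\setminus\{2\}:G\in\hg(\overline{n-1},\bar n,\bar 1,2)\}$ is cross $2$-intersecting. For instance, in the $(\bar 1,\bar 2)$ case, if $|A\cap G|=1$ a single shift $S_{1j}$ produces $F'\in\hf$ disjoint from $G$, and if $|A\cap G|=2$ a further shift $S_{2i}$ produces $F''\in\hf$ disjoint from $G$---each contradicting cross-intersection; in the asymmetric cases only one of the two shifts can be used because $G$ contains $1$ (resp.~$2$).

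Now the hypothesis $|\ha(\bar 1,\bar 2)|>\binom{n-10}{k-3}$ combined with \eqref{ineq-walk} shows that $\ha(\bar 1,\bar 2)$ is not pseudo $2$-intersecting on $[3,n-8]$ (size $n-10$), and therefore not pseudo $3$-intersecting either. Stratifying each of the three slices by $Y:=G\cap[n-7,n-2]$ of size $j\in\{0,1,\ldots,6\}$, the restrictions $G\cap[3,n-8]$ form initial families on $[3,n-8]$ that inherit the cross $3$- (resp.~cross $2$-) intersecting property with $\ha(\bar 1,\bar 2)$. Proposition~\ref{fk2021} then forces the $\hg$ side in each stratum to be pseudo $4$- (resp.~pseudo $3$-) intersecting, and \eqref{ineq-walk} yields the uniform bound $\binom{n-10}{k-j-4}$ per stratum. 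Summing over $Y$ and using Vandermonde,
\[
\sum_{j=0}^{6}\binom{6}{j}\binom{n-10}{k-j-4}=\binom{n-4}{k-4}=\frac{k-3}{n-3}\binom{n-3}{k-3},
\]
which by $n\ge 39k$ is less than $2\binom{n-3}{k-3}$. Summing over the three slices thus contributes less than $6\binom{n-3}{k-3}$, completing the argument.

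The main obstacle I anticipate is verifying the cross $2$- and cross $3$-intersecting strengthenings by iterated shifting, especially the two-shift argument for $|A\cap G|=2$ in the $(\bar 1,\bar 2)$ case: each intermediate shifted set must remain in $\hf$ (using that $\hf$ is initial on $[n-8]$ and that the extension element $n-1$ lies outside the shifting region), and the final shifted set must be disjoint from $G$ (using $G\cap\{1,2,n-1,n\}=\emptyset$). Once these structural facts are in hand, the assembly of the size bounds via Proposition~\ref{fk2021}, \eqref{ineq-walk} and Vandermonde is routine.
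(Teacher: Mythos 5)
Your proof is correct and follows essentially the same route as the paper: decompose $\hg(\overline{n-1},\bar n)$ according to the traces on $\{1,2\}$ and $[n-7,n-2]$, use initiality on $[n-8]$ to upgrade the cross-intersection of $\ha(\bar 1,\bar 2)$ with the various slices to cross $2$- and cross $3$-intersection, then apply Proposition~\ref{fk2021} together with \eqref{ineq-walk} to get the per-stratum bound $\binom{n-10}{k-j-4}$ and sum (the paper handles the $(\bar 1,2)$ slice via the containment $\hb_S(\bar 1,2)\subset\hb_S(1,\bar 2)$ rather than your symmetric shift, which is an immaterial difference). The only blemish is your appeal to $n\ge 39k$, which is not a hypothesis of the lemma and is unnecessary, since $\binom{n-4}{k-4}\le\binom{n-3}{k-3}<2\binom{n-3}{k-3}$ holds unconditionally.
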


\begin{proof}
For any $S\subset [n-7,n-2]$, define $\hb_S=\hg(S,[n-7,n])\subset \binom{[n-8]}{k-|S|}$.
Since $\hf$, $\hg$ are both initial on $[n-8]$, $\ha$, $\hb_S$ are initial and cross-intersecting.
Note that  initiality implies $\hb_S(\bar{1},2)\subset \hb_S(1,\bar{2})$. Since $|\ha(\bar{1},\bar{2})|>\binom{n-10}{k-3}$, by \eqref{ineq-walk} we infer $\ha(\bar{1},\bar{2})$ is not pseudo 2-intersecting. As $\ha(\bar{1},\bar{2})$, $\hb_S(1,\bar{2})$ are cross 2-intersecting,  by   Proposition \ref{fk2021}, $\hb_S(1,\bar{2})$ is pseudo 3-intersecting. Then by \eqref{ineq-walk}
\[
|\hb_S(\bar{1},2)|\leq |\hb_S(1,\bar{2})|\leq \binom{n-10}{k-|S|-1-3}=\binom{n-10}{k-|S|-4}.
\]
By \eqref{ineq-walk}, $\ha(\bar{1},\bar{2})$ is not pseudo 3-intersecting. Since $\ha(\bar{1},\bar{2})$, $\hb_S(\bar{1},\bar{2})$ are cross 3-intersecting, by Proposition \ref{fk2021} we infer that $\hb_S(\bar{1},\bar{2})$ is  pseudo 4-intersecting. Then by \eqref{ineq-walk},
\[
|\hb_S(\bar{1},\bar{2})|\leq \binom{n-10}{k-|S|-4}.
\]

Then for $S=\emptyset$,
\begin{align*}
|\hb_{\emptyset}| =|\hb_{\emptyset}(1,2)|+|\hb_{\emptyset}(\bar{1},2)|+ |\hb_{\emptyset}(1,\bar{2})|+|\hb_{\emptyset}(\bar{1},\bar{2})|\leq |\hg(1,2)|+3\binom{n-10}{k-4}.
\end{align*}
For  $|S|\geq 1$, note that $|\hb_S(1,2)|\leq \binom{n-10}{k-|S|-2}$,
\begin{align*}
|\hb_{S}| =|\hb_{S}(1,2)|+|\hb_{S}(\bar{1},2)|+ |\hb_{S}(1,\bar{2})|+|\hb_{S}(\bar{1},\bar{2})|\leq \binom{n-10}{k-|S|-2}+3\binom{n-10}{k-|S|-4}.
\end{align*}
Thus
\begin{align*}
|\hg(\overline{n-1},\overline{n})| &=\sum_{S\subset [n-7,n-2]} |\hb_S|\\[5pt]
&\leq |\hg(1,2)|+3\binom{n-10}{k-4}+\sum_{1\leq i\leq 6} \binom{6}{i}\left(\binom{n-10}{k-i-2}+3\binom{n-10}{k-i-4}\right).
\end{align*}
Since
\[
\sum_{1\leq i\leq 6} \binom{6}{i}\binom{n-10}{k-i-2}=\binom{n-4}{k-2}-\binom{n-10}{k-2}
\]
and
\[
\sum_{1\leq i\leq 6} \binom{6}{i}\binom{n-10}{k-i-4}=\binom{n-4}{k-4}-\binom{n-10}{k-4},
\]
it follows that
\begin{align*}
|\hg(\overline{n-1},\overline{n})|&<|\hg(1,2)|+\binom{n-4}{k-2}
-\binom{n-10}{k-2}+3\binom{n-4}{k-4}\\[5pt]
&<|\hg(1,2)|+ 6\binom{n-4}{k-3}+3\binom{n-4}{k-4}\\[5pt]
&<|\hg(1,2)|+ 6\binom{n-3}{k-3}.
\end{align*}
\end{proof}

\begin{lem}\label{lem3.8}
Suppose that $\hf,\hg\subset \binom{[n]}{k}$ are cross-intersecting and both of $\hf,\hg$ are initial on $[n-8]$. Let $\ha_1=\hf(n,[n-7,n])$ and $\hb_2=\hg(n-1,[n-7,n])$. If $|\ha_1(\bar{1},\bar{2})|>\binom{n-10}{k-3}$, then
\begin{align*}
|\hb_2|<  \binom{n-3}{k-3}.
\end{align*}
\end{lem}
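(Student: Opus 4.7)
The plan is to imitate the proof of Lemma \ref{lem3.7}, but applied to the single trace $\hb_2$ rather than a sum over all $S$. First observe that since $\hf,\hg$ are initial on $[n-8]$, so are $\ha_1,\hb_2\subset\binom{[n-8]}{k-1}$; moreover the two families are cross-intersecting, because for $A\in\ha_1, B\in\hb_2$ the sets $A\cup\{n\}\in\hf$ and $B\cup\{n-1\}\in\hg$ have disjoint parts outside $[n-8]$, so cross-intersection of $\hf,\hg$ forces $A\cap B\neq\emptyset$.

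Next decompose
\[
|\hb_2|=|\hb_2(1,2)|+|\hb_2(\bar 1,2)|+|\hb_2(1,\bar 2)|+|\hb_2(\bar 1,\bar 2)|.
\]
The piece $\hb_2(1,2)$ consists of $(k-3)$-subsets of $[3,n-8]$, giving the trivial bound $|\hb_2(1,2)|\le\binom{n-10}{k-3}$; and initiality of $\hb_2$ yields the inclusion $\hb_2(\bar 1,2)\subset\hb_2(1,\bar 2)$ via the $2\mapsto 1$ shift, so $|\hb_2(\bar 1,2)|\le|\hb_2(1,\bar 2)|$.

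The key step is to establish that $\ha_1(\bar 1,\bar 2)$ is cross $2$-intersecting with $\hb_2(1,\bar 2)$ and cross $3$-intersecting with $\hb_2(\bar 1,\bar 2)$. The argument is the one behind Proposition \ref{prop-1.2}: if some $A\in\ha_1$ and $B\in\hb_2$ with $1,2\notin A$ had $|A\cap B|\le 1$ (resp.\ $\le 2$), one could shift the one (resp.\ two) common elements down into $\{1,2\}$ using initiality of $\hb_2$, producing a set in $\hb_2$ disjoint from $A$ and contradicting cross-intersection. Now the hypothesis $|\ha_1(\bar 1,\bar 2)|>\binom{n-10}{k-3}$ together with the walk bound \eqref{ineq-walk} (applied to the initial family $\ha_1(\bar 1,\bar 2)\subset\binom{[3,n-8]}{k-1}$) shows that $\ha_1(\bar 1,\bar 2)$ is not pseudo $2$-intersecting (hence not pseudo $3$-intersecting). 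Proposition \ref{fk2021} then forces $\hb_2(1,\bar 2)$ to be pseudo $3$-intersecting and $\hb_2(\bar 1,\bar 2)$ to be pseudo $4$-intersecting, so \eqref{ineq-walk} again gives $|\hb_2(1,\bar 2)|,|\hb_2(\bar 1,\bar 2)|\le\binom{n-10}{k-5}$.

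Putting everything together, $|\hb_2|\le\binom{n-10}{k-3}+3\binom{n-10}{k-5}$. The final bookkeeping is to check that this is strictly less than $\binom{n-3}{k-3}$. Iterating Pascal's identity,
\[
\binom{n-3}{k-3}-\binom{n-10}{k-3}=\sum_{i=0}^{6}\binom{n-4-i}{k-4}\ge 7\binom{n-10}{k-4},
\]
and since $\binom{n-10}{k-4}/\binom{n-10}{k-5}=(n-k-5)/(k-4)$ is comfortably large in the regime where $n$ is a reasonable multiple of $k$, one has $7\binom{n-10}{k-4}>3\binom{n-10}{k-5}$, giving $|\hb_2|<\binom{n-3}{k-3}$. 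The only delicate point is the shifting verification of the cross $2$- and $3$-intersecting properties; the rest is bookkeeping entirely parallel to the final lines of Lemma \ref{lem3.7}.
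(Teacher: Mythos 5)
Your proposal is correct and follows essentially the same route as the paper's proof: the same four-part decomposition of $\hb_2$, the inclusion $\hb_2(\bar 1,2)\subset\hb_2(1,\bar 2)$ from initiality, the cross $2$- and $3$-intersecting properties of $\ha_1(\bar 1,\bar 2)$ with $\hb_2(1,\bar 2)$ and $\hb_2(\bar 1,\bar 2)$, and then Proposition \ref{fk2021} together with \eqref{ineq-walk} to get the bounds $\binom{n-10}{k-3}+3\binom{n-10}{k-5}$. You even spell out the cross-intersecting justification and the final comparison with $\binom{n-3}{k-3}$ (valid once $n$ is a modest multiple of $k$), which the paper leaves implicit.
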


\begin{proof}
Note that initiality implies $\hb_2(\bar{1},2)\subset \hb_2(1,\bar{2})$. Since $|\ha_1(\bar{1},\bar{2})|>\binom{n-10}{k-3}$, by \eqref{ineq-walk} $\ha_1(\bar{1},\bar{2})$ is not pseudo 2-intersecting.  As $\ha_1(\bar{1},\bar{2})$, $\hb_2(1,\bar{2})$ are cross 2-intersecting,  by Proposition \ref{fk2021} we infer that $\hb_2(1,\bar{2})$ is  pseudo 3-intersecting. Then by \eqref{ineq-walk}
\[
|\hb_2(\bar{1},2)|\leq |\hb_2(1,\bar{2})|\leq \binom{n-10}{k-5}.
\]
As $\ha_1(\bar{1},\bar{2})$ is not pseudo 3-intersecting and $\ha_1(\bar{1},\bar{2})$, $\hb_2(\bar{1},\bar{2})$ are cross 3-intersecting,  Proposition \ref{fk2021} implies that $\hb_2(\bar{1},\bar{2})$ is  pseudo 4-intersecting. Then by \eqref{ineq-walk},
\[
|\hb_2(\bar{1},\bar{2})|\leq \binom{n-10}{k-5}.
\]
Clearly $|\hb_2(1,2)|\leq \binom{n-10}{k-3}$. Therefore,
\[
|\hb_2| =|\hb_2(1,2)|+|\hb_2(\bar{1},2)|+ |\hb_2(1,\bar{2})|+|\hb_2(\bar{1},\bar{2})|< \binom{n-3}{k-3}.
\]
\end{proof}

Now we are in a position to prove Lemma \ref{lem-main3}.

\begin{proof}[Proof of Lemma \ref{lem-main3}]
Arguing indirectly assume that $\hf,\hg\subset \binom{[n]}{k}$ are  cross-intersecting,
\begin{align}
|\hf|,|\hg|\geq 2\binom{n-2}{k-2}+4\binom{n-3}{k-3}\mbox{ and } \varrho(\hf),\varrho(\hg)\leq \frac{1}{2}.
\end{align}
Without loss of generality, suppose that
$\hf,\hg$ are shifted ad extremis with respect to $\{\varrho(\hf)\leq \frac{1}{2},\varrho(\hg)\leq \frac{1}{2}\}$ and let $\mathds{H}_1$, $\mathds{H}_2$ be the graphs formed by the shift-resistant pairs for $\hf$ and $\hg$, respectively.

Let $\hh=\hf$ or $\hg$.  For every $\{x,y\}\in \binom{[n]}{2}$, we may assume that $|\hh(x,y)|\leq |\hh(\bar{x},\bar{y})|$. Otherwise by \eqref{eq-hhxhhy} $|\hh(x)|+|\hh(y)|> |\hh|$ and  we are done.

\begin{claim}
We may assume that  for all $P\in \binom{[n]}{2}$
\begin{align}\label{ineq-hfp2}
|\hf(P)|,|\hg(P)|\leq \binom{n-3}{k-3}+\binom{n-4}{k-3}+\binom{n-5}{k-3}+\binom{n-7}{k-4}<3\binom{n-3}{k-3}.
\end{align}
\end{claim}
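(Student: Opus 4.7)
The plan is to establish the claim by contradiction. Suppose the stated bound fails; by symmetry I may assume there is a pair $\{x,y\}$ with $|\hf(\{x,y\})| > \binom{n-3}{k-3}+\binom{n-4}{k-3}+\binom{n-5}{k-3}+\binom{n-7}{k-4}$, and I aim to derive $\max\{\varrho(\hf),\varrho(\hg)\}>\frac{1}{2}$, contradicting the indirect assumption driving the proof of Lemma~\ref{lem-main3}.

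The first step is to apply the (unnamed) cross-intersecting shadow lemma immediately following Lemma~\ref{lem-1}: its hypothesis is exactly matched by our assumption, and it yields $|\hg(\bar x, \bar y)| \leq \binom{n-6}{k-4}+\binom{n-7}{k-4}$. I then invoke the already-established ``may assume'' reduction on $\hg$ at the pair $\{x,y\}$: if $|\hg(\{x,y\})| > |\hg(\bar x, \bar y)|$ then \eqref{eq-hhxhhy} immediately gives $|\hg(x)|+|\hg(y)|>|\hg|$, hence $\varrho(\hg)>1/2$ and we are done; otherwise $|\hg(\{x,y\})| \leq |\hg(\bar x, \bar y)| \leq \binom{n-6}{k-4}+\binom{n-7}{k-4}$, so both quantities are tiny compared to the lower bound on $|\hg|$.

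In this residual case, $|\hg(x,\bar y)|+|\hg(\bar x, y)| \geq |\hg| - 2\bigl(\binom{n-6}{k-4}+\binom{n-7}{k-4}\bigr)$ is nearly all of $|\hg|$. On the other hand, the may-assume for $\hf$ forces $|\hf(\bar x,\bar y)| \geq |\hf(\{x,y\})|$, which is still large, and $\hf(\bar x, \bar y)$ must cross-intersect each of $\hg(x,\bar y)$ and $\hg(\bar x, y)$ as families on $[n]\setminus\{x,y\}$ (the first as $k$-sets, the second as $(k-1)$-sets). I would then apply Hilton's Lemma, combined with the shifted-ad-extremis initial structure on $[n-8]$, to obtain a strict upper bound on $|\hf(\bar x, \bar y)|$ that falls below $\binom{n-3}{k-3}+\binom{n-4}{k-3}+\binom{n-5}{k-3}+\binom{n-7}{k-4}$ under the hypothesis $n\geq 39k$, thereby producing the contradiction.

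The main obstacle will be this last step. The first-order estimates only pin down $\varrho(\hg) \geq 1/2 - o(1)$ rather than the strict inequality, so a direct appeal to \eqref{eq-hhxhhy} at a single pair is insufficient; one must quantitatively exploit the simultaneous cross-intersection of $\hf(\bar x, \bar y)$ with the two near-complementary halves $\hg(x,\bar y)$ and $\hg(\bar x, y)$, rather than with either of them alone, in order to close the margin and force the strict size violation.
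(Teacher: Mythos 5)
Your opening moves match the paper: assume $|\hf(x,y)|$ exceeds the stated bound and apply the second lemma from \cite{FW2022} (inequality \eqref{ineq-hgxy2}) to get $|\hg(\bar{x},\bar{y})|\leq \binom{n-6}{k-4}+\binom{n-7}{k-4}$. But the way you propose to finish has a genuine gap. Your target is to push $|\hf(\bar{x},\bar{y})|$ below the threshold $\binom{n-3}{k-3}+\binom{n-4}{k-3}+\binom{n-5}{k-3}+\binom{n-7}{k-4}\approx 3\binom{n-3}{k-3}$, contradicting $|\hf(\bar{x},\bar{y})|\geq|\hf(x,y)|$. This is out of reach with the tools you name: the relevant piece of $\hg$ has size only about $\tfrac12|\hg|\approx\binom{n-3}{k-2}+3\binom{n-3}{k-3}$, and Hilton's Lemma applied to the $k$-uniform family $\hf(\bar{x},\bar{y})$ against a $(k-1)$-uniform family of that size yields only $|\hf(\bar{x},\bar{y})|\leq\binom{n-4}{k-2}+\binom{n-6}{k-3}$, which for $n\geq 39k$ is of order $\binom{n-3}{k-2}$, vastly larger than $3\binom{n-3}{k-3}$. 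Invoking the cross-intersection with \emph{both} halves $\hg(x,\bar{y})$ and $\hg(\bar{x},y)$ does not help, because Hilton's Lemma sees only cardinalities and you give no mechanism to combine the two constraints; you yourself flag this as the unresolved ``main obstacle.'' A further problem: you appeal to the initial structure on $[n-8]$, but in the paper that reduction is established only later (via the matching-number claim for the shift-resistant graphs), and that claim's proof uses the present bound \eqref{ineq-hfp2} --- so using it here is circular.

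The paper closes the argument differently: the contradiction is against the lower bound on $|\hf|$, not against the threshold for a single piece. From $\varrho(\hg)\leq\tfrac12$ applied coordinatewise (not merely to the sum, as you do) one gets that \emph{each} of $|\hg(\bar{x},y)|$ and $|\hg(x,\bar{y})|$ exceeds $\binom{n-3}{k-2}+\binom{n-5}{k-3}+\binom{n-6}{k-3}$. Hilton's Lemma then bounds the $(k-1)$-uniform pieces $|\hf(x,\bar{y})|,|\hf(\bar{x},y)|\leq\binom{n-4}{k-3}+\binom{n-6}{k-4}$ and the $k$-uniform piece $|\hf(\bar{x},\bar{y})|\leq\binom{n-4}{k-2}+\binom{n-6}{k-3}$; together with the may-assume inequality $|\hf(x,y)|\leq|\hf(\bar{x},\bar{y})|$ this gives $|\hf|\leq 2\binom{n-3}{k-2}+2\binom{n-5}{k-3}<2\binom{n-2}{k-2}+4\binom{n-3}{k-3}$, contradicting the hypothesis on $|\hf|$. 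You would need to redirect your argument along these lines; as written, the final step cannot be completed.
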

\begin{proof}
By symmetry, suppose that $|\hf(x,y)|>\binom{n-3}{k-3}+\binom{n-4}{k-3}+\binom{n-5}{k-3}+\binom{n-7}{k-4}$ for some $\{x,y\}\in \binom{[n]}{2}$.
For convenience assume that $\{x,y\}=\{n-1,n\}$. Then by \eqref{ineq-hgxy2} we obtain
\[
 |\hg(\overline{n-1},\overline{n})|\leq \binom{n-6}{k-4}+\binom{n-7}{k-4}<2\binom{n-4}{k-4}.
\]
Note that $\varrho(\hg)\leq \frac{1}{2}$ implies that
\[
|\hg(\overline{n-1})|\geq \frac{1}{2}|\hg| \geq \binom{n-2}{k-2}+2\binom{n-3}{k-3}=\binom{n-3}{k-2}+3\binom{n-3}{k-3}.
\]
It follows that
\begin{align*}
|\hg(\overline{n-1},n)|&= |\hg(\overline{n-1})|-  |\hg(\overline{n-1},\overline{n})|\\[5pt]
&\geq \binom{n-3}{k-2}+3\binom{n-3}{k-3}-2\binom{n-4}{k-4}\\[5pt]
&>\binom{n-3}{k-2}+\binom{n-5}{k-3}+\binom{n-6}{k-3}.
\end{align*}
Similarly, we have
\[
|\hg(n-1,\overline{n})|\geq \binom{n-3}{k-2}+\binom{n-5}{k-3}+\binom{n-6}{k-3}.
\]
Clearly,
\[
\left\{\bar{G}\in \binom{[n-2]}{k-1}\colon 1\in \bar{G} \mbox{ or }\{2,3\}\subset \bar{G} \mbox{ or }\{2,4\}\subset \bar{G} \right\}\subset \hl(n-2,k-1,|\hg(\overline{n-1},n)|).
\]
Since $\hf(n-1,\overline{n}), \hg(\overline{n-1}, n)$ are cross-intersecting, by Hilton's Lemma
\[
 \hl(n-2,k-1,|\hf(n-1,\overline{n})|) \subset \left\{\bar{F}\in \binom{[n-2]}{k-1}\colon (1,2)\subset  \bar{F} \mbox{ or }(1,3,4)\subset \bar{F} \right\}.
\]
It follows that $|\hf(n-1,\overline{n})|\leq \binom{n-4}{k-3}+\binom{n-6}{k-4}$. Similarly $|\hf(\overline{n-1},n)|\leq \binom{n-4}{k-3}+\binom{n-6}{k-4}$. Moreover, $\hf(\overline{n-1},\overline{n}), \hg(n-1,\overline{n})$ are cross-intersecting. By Hilton's Lemma
\[
 \hl(n-2,k,|\hf(\overline{n-1},\overline{n})|) \subset \left\{\tilde{F}\in \binom{[n-2]}{k}\colon (1,2)\subset  \tilde{F}\mbox{ or }(1,3,4)\subset \tilde{F} \right\}.
\]
Hence $|\hf(\overline{n-1},\overline{n})|\leq \binom{n-4}{k-2}+\binom{n-6}{k-3}$. By $|\hf(n-1,n)|\leq |\hf(\overline{n-1},\overline{n})|$, we conclude that
\begin{align*}
|\hf| &= |\hf(n-1,n)|+|\hf(\overline{n-1},\overline{n})|+|\hf(n-1,\overline{n})|+|\hf(\overline{n-1},n)|\\[5pt]
&\leq 2\left(\binom{n-4}{k-2}+\binom{n-6}{k-3}\right)+2\left(\binom{n-4}{k-3}+\binom{n-6}{k-4}\right)\\[5pt]
&=2\left(\binom{n-3}{k-2}+\binom{n-5}{k-3}\right)\\[5pt]
&<2\binom{n-2}{k-2}+4\binom{n-3}{k-3},
\end{align*}
contradiction.
\end{proof}

\begin{claim}\label{claim-8}
$\mathds{H}_i$ has matching number at most 2 for $i=1,2$.
\end{claim}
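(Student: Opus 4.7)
My plan is to argue by symmetry and prove $\nu(\mathds{H}_1) \leq 2$; the bound on $\nu(\mathds{H}_2)$ then follows by interchanging the roles of $\hf$ and $\hg$. Assume for contradiction that there exist three pairwise disjoint shift-resistant pairs $P_i = (x_i, y_i) \in \mathds{H}_1$ for $i = 1, 2, 3$. The first step is to convert the shift-resistance at $(x_i,y_i)$ into a clean size statement. Since $S_{x_iy_i}$ leaves the degree at every $v \notin \{x_i,y_i\}$ unchanged and only raises (never lowers) the degree at $x_i$, the condition $\varrho(S_{x_iy_i}(\hf)) > \frac{1}{2}$ forces $|S_{x_iy_i}(\hf)(x_i)| > |\hf|/2$. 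A short calculation of the shifted degree gives
\[
|S_{x_iy_i}(\hf)(x_i)| = |\hf(x_i,y_i)| + |\hf(x_i,\bar{y}_i) \cup \hf(y_i,\bar{x}_i)| \leq |\hf(x_i)| + |\hf(y_i)| - |\hf(x_i,y_i)| = |\hf_{P_i}|,
\]
and hence $|\hf_{P_i}| > |\hf|/2$ for each $i \in \{1,2,3\}$.

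The second step is a double counting. Setting $I(F) := \{i : F \cap P_i \neq \emptyset\}$ and $a_j := |\{F \in \hf : |I(F)| = j\}|$, summing the three inequalities yields $\sum_j j a_j = \sum_i |\hf_{P_i}| > \frac{3}{2}|\hf|$, which combined with $\sum_j a_j = |\hf|$ gives
\[
a_2 + 2 a_3 > |\hf|/2 + a_0 \geq |\hf|/2.
\]
The identity $\sum_{1 \leq i < j \leq 3} |\hf_{P_i} \cap \hf_{P_j}| = \sum_{F \in \hf}\binom{|I(F)|}{2} = a_2 + 3 a_3$ then reduces the task to upper-bounding pairwise intersections of the sets $\hf_{P_i}$.

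For each of the three unordered pairs $\{i,j\} \subset \{1,2,3\}$, any $F \in \hf_{P_i} \cap \hf_{P_j}$ contains some pair $\{r,q\}$ with $r \in P_i$, $q \in P_j$, so \eqref{ineq-hfp2} yields $|\hf_{P_i} \cap \hf_{P_j}| < 4 \cdot 3\binom{n-3}{k-3} = 12\binom{n-3}{k-3}$. Summing over the three pairs and combining with $a_2 + 2a_3 \leq a_2 + 3a_3$ gives $|\hf|/2 < 36\binom{n-3}{k-3}$, that is, $|\hf| < 72\binom{n-3}{k-3}$. Since $n \geq 39k$ implies $(n-2)/(k-2) > 34$ and consequently $\binom{n-2}{k-2} > 34\binom{n-3}{k-3}$, we have $2\binom{n-2}{k-2} + 4\binom{n-3}{k-3} > 72\binom{n-3}{k-3}$, contradicting the hypothesis on $|\hf|$. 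The most delicate point is the first step: one has to account carefully for the collision set $\hf(x_i,\bar{y}_i) \cap \hf(y_i,\bar{x}_i)$ appearing in the shifted-degree identity, but since it only shrinks the left-hand side it can be safely discarded in this upper bound.
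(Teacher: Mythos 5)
Your proof is correct and follows essentially the same route as the paper: shift-resistance at a pair $P_i$ is converted into $|\hf_{P_i}|>\tfrac12|\hf|$ via the shifted-degree identity, pairwise intersections $|\hf_{P_i}\cap\hf_{P_j}|$ are bounded by $12\binom{n-3}{k-3}$ through a union bound over the four cross pairs and \eqref{ineq-hfp2}, and a contradiction with the size hypothesis follows. Your double count with the $a_j$'s is just an unpacked version of the Bonferroni inclusion--exclusion step $|\hj_1\cup\hj_2\cup\hj_3|\geq \sum_r|\hj_r|-\sum_{r<r'}|\hj_r\cap\hj_{r'}|$ that the paper uses, so the two arguments are essentially identical.
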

\begin{proof}
Suppose for contradiction that $(a_1,b_1), (a_2,b_2),(a_3,b_3)$ are pairwise disjoint and say
\[
\varrho(S_{a_rb_r}(\hf))>\frac{1}{2}\mbox{ for } r=1,2,3.
\]
Recall the definition
\[
\hf_{\{x,y\}} =\{F\in \hf\colon F\cap \{x,y\}\neq \emptyset\}.
\]
Set $\hj_r=\hf_{\{a_r,b_r\}}$, $r=1,2,3$. Then  for $r=1,2,3$,
\begin{align*}
|\hj_r| \geq & |\hf(a_r,b_r)|+|\hf(a_r,\overline{b_r})\cup \hf(\overline{a_r},b_r)|>  \frac{1}{2}|\hf|.
\end{align*}
Note that  \eqref{ineq-hfp2} and $n\geq 36k$ imply for $1\leq r<r'\leq 3$ that
\[
|\hj_r\cap \hj_{r'}|< 4\cdot 3\binom{n-3}{k-3}< \frac{1}{3}\binom{n-2}{k-2} < \frac{1}{6}|\hf|.
\]
Therefore,
\begin{align*}
|\hj_1\cup\hj_2\cup \hj_3| &> \frac{3}{2}|\hf|-|\hj_1\cap \hj_2|-|\hj_1\cap \hj_3|-|\hj_2\cap \hj_3|> \frac{3}{2}|\hf|-\frac{1}{2}|\hf|=|\hf|,
\end{align*}
a contradiction.
\end{proof}

If both $\hf$ and $\hg$ are initial, then by \eqref{ineq-maxrho} we are done. Thus we may assume that $\mathds{H}_1\cup \mathds{H}_2\neq\emptyset$. Without loss of generality, assume that $(n-1,n)\in \mathds{H}_1$. By Claim \ref{claim-8}, we may further assume that both $\hf$ and $\hg$ are initial on $[n-8]$. Let
\begin{align*}
&\ha=\left\{A\in \binom{[n-8]}{k-1}\colon A\cup \{n-1\}\in \hf \mbox{ or }A\cup \{n\}\in \hf  \right\}.
\end{align*}
Note that $(n-1,n)\in \mathds{H}_1$ implies
\begin{align}\label{ineq-3.2}
|\hf_{\{n-1,n\}}|=|\hf(n-1,n)|+|\hf(\overline{n-1},n)\cup \hf(n-1,\overline{n})|> \frac{1}{2}|\hf|.
\end{align}
For $1\leq i\leq 3$,  applying Lemma \ref{lem-3.5} with $M=\binom{n-3}{k-3}+\binom{n-4}{k-3}+\binom{n-5}{k-3}+\binom{n-7}{k-4}>2\binom{n-5}{k-3}$, from \eqref{ineq-FQR-general} we infer
\begin{align*}
|\hf_{\{n-1,n\}}\cap \hf_{\{n-2i-1,n-2i\}}|
\leq 3M+\binom{n-7}{k-5}+\binom{n-8}{k-5}< 9\binom{n-3}{k-3}.
\end{align*}
 By \eqref{ineq-hfp2} and \eqref{ineq-3.2}, it follows that
\begin{align*}
|\ha| &> \frac{1}{2}|\hf| -|\hf(n-1,n)| -\sum_{1\leq i\leq 3}|\hf_{\{n-1,n\}}\cap \hf_{\{n-2i-1,n-2i\}}|\\[5pt]
&> \frac{1}{2}|\hf| - 30\binom{n-3}{k-3}.
\end{align*}
By Lemma \ref{lem-3.5}, we also have $|\hf_{\{n-1,n\}}\cap \hf_{\{1,2\}}|< 9\binom{n-3}{k-3}$. Then for $n\geq 39k$,
\begin{align}\label{ha1bar2bar2}
|\ha(\bar{1},\bar{2})|&\geq |\ha| - |\hf_{\{n-1,n\}}\cap \hf_{\{1,2\}}|\nonumber\\[5pt]
&> \frac{1}{2}|\hf| - 39\binom{n-3}{k-3}\nonumber\\[5pt]
&\geq  \binom{n-2}{k-2}+2\binom{n-3}{k-3} -39\binom{n-3}{k-3}\nonumber\\[5pt]
&>2\binom{n-10}{k-3}.
\end{align}
By Lemma \ref{lem3.7}, we obtain that
\begin{align}\label{ineq-3.11}
|\hg(\overline{n-1},\overline{n})|< |\hg(1,2)|+6\binom{n-3}{k-3}\overset{\eqref{ineq-hfp2}}{<}9\binom{n-3}{k-3}.
\end{align}
Since $|\hg(\overline{n})|\geq \frac{1}{2}|\hg|$, by \eqref{ineq-3.11} it follows that
\[
|\hg(n-1,\overline{n})| =|\hg(\overline{n})|- |\hg(\overline{n-1},\overline{n})|> \frac{1}{2}|\hg|-9\binom{n-3}{k-3}.
\]
Then for $n\geq 26k$
\begin{align}\label{ineq -hb_2}
|\hg(n-1,[n-7,n])| &\geq |\hg(n-1,\overline{n})| - \sum_{i\in[n-7,n-2]}|\hg(n-1,i)|\nonumber\\[5pt]
&> \frac{1}{2}|\hg|-27\binom{n-3}{k-3}\nonumber\\[5pt]
&\geq \binom{n-2}{k-2}+2\binom{n-3}{k-3} -27\binom{n-3}{k-3}\nonumber\\[5pt]
&> \binom{n-3}{k-3}.
\end{align}

Let
\[
\ha_i= \{A\in \ha\colon A\cup \{n+1-i\}\in \hf\}, i=1,2.
\]
Clearly $|\ha(\bar{1},\bar{2})|=|\ha_1(\bar{1},\bar{2})|+|\ha_2(\bar{1},\bar{2})|$. Without loss of generality, assume that $|\ha_1(\bar{1},\bar{2})|\geq |\ha_2(\bar{1},\bar{2})|$. Then by \eqref{ha1bar2bar2} we infer
\begin{align*}
|\ha_1(\bar{1},\bar{2})|>\binom{n-10}{k-3}.
\end{align*}
Now by  Lemma \ref{lem3.8} we obtain that
\[
|\hg(n-1,[n-7,n])|<\binom{n-3}{k-3},
\]
contradicting \eqref{ineq -hb_2}.
\end{proof}

By following a similar approach as in the proof of Lemma \ref{lem-main3}, we prove Theorem \ref{thm-main2}.

\begin{proof}[Proof of Theorem \ref{thm-main2}]
Arguing indirectly assume that $\hf,\hg\subset \binom{[n]}{k}$ are  cross-intersecting,
\begin{align}\label{ineq-3.1}
|\hf|,|\hg|\geq \frac{1}{\varepsilon}\binom{n-3}{k-3} \mbox{ and } \varrho(\hf),\varrho(\hg)\leq \frac{1}{2}-\varepsilon.
\end{align}
Without loss of generality suppose that
$\hf,\hg$ are shifted ad extremis with respect to $\{\varrho(\hf)\leq \frac{1}{2}-\varepsilon,\varrho(\hg)\leq \frac{1}{2}-\varepsilon\}$ and let $\mathds{H}_1, \mathds{H}_2$ be the graphs formed by the shift-resistant pairs in $\hf$ and $\hg$, respectively.

\begin{claim}
We may assume that  for all $P\in \binom{[n]}{2}$
\begin{align}\label{ineq-hfp}
|\hf(P)|,|\hg(P)|\leq \binom{n-3}{k-3}+\binom{n-4}{k-3}+\binom{n-6}{k-4}<2\binom{n-3}{k-3}.
\end{align}
\end{claim}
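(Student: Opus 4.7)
The plan is to argue by contradiction. Suppose some pair $P=\{x,y\}\in \binom{[n]}{2}$ violates the claimed bound. By the symmetric roles of $\hf$ and $\hg$ in the hypotheses of Theorem \ref{thm-main2}, I may assume without loss of generality that
\[
|\hf(x,y)|>\binom{n-3}{k-3}+\binom{n-4}{k-3}+\binom{n-6}{k-4}.
\]
This is exactly the trigger of Lemma \ref{lem-1}, so applying it to the cross-intersecting pair $\hf,\hg$ yields
\[
|\hg(\bar{x},\bar{y})|\leq \binom{n-5}{k-3}+\binom{n-6}{k-3}<2\binom{n-3}{k-3}.
\]

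The next step is to feed this estimate into the identity \eqref{eq-hhxhhy} applied to $\hg$, namely
\[
|\hg(x)|+|\hg(y)|=|\hg(x,y)|+|\hg|-|\hg(\bar{x},\bar{y})|,
\]
together with the standing hypothesis $\varrho(\hg)\leq \tfrac{1}{2}-\varepsilon$, which bounds the left-hand side by $(1-2\varepsilon)|\hg|$. Rearranging gives $|\hg(x,y)|\leq |\hg(\bar{x},\bar{y})|-2\varepsilon|\hg|$. Since $|\hg|\geq \tfrac{1}{\varepsilon}\binom{n-3}{k-3}$, one has $2\varepsilon|\hg|\geq 2\binom{n-3}{k-3}$, and substituting the Lemma \ref{lem-1} bound produces $|\hg(x,y)|<0$, which is absurd.

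The case in which $|\hg(x,y)|$ is the oversized term rather than $|\hf(x,y)|$ is handled identically after interchanging $\hf$ and $\hg$ throughout. I do not anticipate any serious obstacle: the whole argument is a one-line algebraic cancellation once Lemma \ref{lem-1} is in hand. The decisive feature---absent from the analogous claim in Lemma \ref{lem-main3}, which had to proceed via a longer Hilton-based detour---is the strict gap $\varepsilon$ below $\tfrac{1}{2}$ in the bound on $\varrho(\hg)$; this is precisely what lets the identity \eqref{eq-hhxhhy} absorb the small bound on $|\hg(\bar{x},\bar{y})|$ and produce an immediate contradiction, with the threshold $\varepsilon\leq \tfrac{1}{58}$ providing more than enough slack in the binomial inequality $\binom{n-5}{k-3}+\binom{n-6}{k-3}<2\binom{n-3}{k-3}$.
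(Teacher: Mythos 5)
Your argument is correct and is essentially the paper's own proof: the same trigger via Lemma \ref{lem-1}, the same identity \eqref{eq-hhxhhy}, and the same use of $2\varepsilon|\hg|\geq 2\binom{n-3}{k-3}$ against the hypothesis $\varrho(\hg)\leq\frac{1}{2}-\varepsilon$. The only cosmetic difference is that the paper drops the $|\hg(x,y)|$ term to conclude that one of $|\hg(x)|,|\hg(y)|$ exceeds $\bigl(\frac{1}{2}-\varepsilon\bigr)|\hg|$, whereas you keep it and reach the equivalent absurdity $|\hg(x,y)|<0$.
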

\begin{proof}
By symmetry, assume that $|\hf(x,y)|>\binom{n-3}{k-3}+\binom{n-4}{k-3}+\binom{n-6}{k-4}$ for some $\{x,y\}\in \binom{[n]}{2}$. Then by \eqref{ineq-hgxy} we obtain
\[
|\hg(\bar{x},\bar{y})|\leq \binom{n-5}{k-3}+\binom{n-6}{k-3}<2\binom{n-3}{k-3}\leq 2\varepsilon |\hg|.
\]
By \eqref{eq-hhxhhy}, it follows that
\[
|\hg(x)|+|\hg(y)| \geq |\hg|-|\hg(\bar{x},\bar{y})|> (1-2\varepsilon)|\hg|.
\]
Therefore,
\[
\mbox{either }|\hg(x)|>\left(\frac{1}{2}-\varepsilon\right)|\hg| \mbox{ or } |\hg(y)|>\left(\frac{1}{2}-\varepsilon\right)|\hg|,
\]
contradicting our assumption \eqref{ineq-3.1}.
\end{proof}

\begin{claim}\label{claim-6}
$\mathds{H}_i$ has matching number at most 2 for $i=1,2$.
\end{claim}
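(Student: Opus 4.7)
The plan is to reprise the three-disjoint-shift-resistant-pairs contradiction used in Claim~\ref{claim-8} within the proof of Lemma~\ref{lem-main3}, adapting the numerics: the threshold $\tfrac12$ is replaced by $\tfrac12-\varepsilon$, the degree-pair bound~\eqref{ineq-hfp} is weaker than~\eqref{ineq-hfp2}, and the size hypothesis $|\hf|,|\hg|\ge\frac1\varepsilon\binom{n-3}{k-3}$ is used to turn constant-factor error terms into $O(\varepsilon)|\hf|$.

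Fix $i\in\{1,2\}$ and suppose for contradiction that $\mathds{H}_i$ contains three pairwise disjoint shift-resistant pairs $(a_r,b_r)$ with $a_r<b_r$ for $r=1,2,3$; by symmetry consider $i=1$. Because $S_{a_rb_r}$ leaves every vertex degree outside $\{a_r,b_r\}$ unchanged and only increases that of $a_r$, the hypothesis $\varrho(\hf)\le\tfrac12-\varepsilon$ forces the maximum of $S_{a_rb_r}(\hf)$ to be attained at $a_r$, so $|S_{a_rb_r}(\hf)(a_r)|>(\tfrac12-\varepsilon)|\hf|$. Setting $\hj_r:=\hf_{\{a_r,b_r\}}$ and noting that every set counted by $|S_{a_rb_r}(\hf)(a_r)|$ already lies in $\hj_r$, I obtain
\[
|\hj_r|>\Bigl(\tfrac12-\varepsilon\Bigr)|\hf|,\qquad r=1,2,3.
\]

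For the pairwise overlaps I would invoke Lemma~\ref{lem-3.5} with $M:=\binom{n-3}{k-3}+\binom{n-4}{k-3}+\binom{n-6}{k-4}$, the bound supplied by~\eqref{ineq-hfp}. Then $M\ge 2\binom{n-5}{k-3}$, so the lemma applies, and a short Pascal-rule calculation gives $M<2\binom{n-3}{k-3}$; the two remaining terms in the lemma are dwarfed by $\binom{n-3}{k-3}$ under $n\ge 39k$, so
\[
|\hj_r\cap\hj_{r'}|\le 3M+\binom{n-7}{k-5}+\binom{n-8}{k-5}<8\binom{n-3}{k-3}\le 8\varepsilon|\hf|
\]
for all $1\le r<r'\le 3$. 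Inclusion-exclusion then yields
\[
|\hj_1\cup\hj_2\cup\hj_3|>3\Bigl(\tfrac12-\varepsilon\Bigr)|\hf|-3\cdot 8\varepsilon|\hf|=\Bigl(\tfrac32-27\varepsilon\Bigr)|\hf|>|\hf|
\]
provided $\varepsilon<\tfrac1{54}$, which is implied by the hypothesis $\varepsilon\le\tfrac1{58}$. This contradicts $\hj_1\cup\hj_2\cup\hj_3\subset\hf$ and establishes the claim.

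The only delicate point is numerical bookkeeping: the constant $8$ in the pairwise-intersection bound, combined with the threefold inclusion-exclusion and the $3\varepsilon$ slippage from the lower bound on $|\hj_r|$, produces a critical threshold of $\tfrac1{54}$, which is just above $\tfrac1{58}$. Verifying that the constants cooperate is what must be done carefully, but no new combinatorial ideas beyond those of Claim~\ref{claim-8} are required.
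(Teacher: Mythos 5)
Your argument is correct and is essentially the paper's own proof of this claim: the same three disjoint pairs, the same lower bound $|\hj_r|>(\tfrac12-\varepsilon)|\hf|$ coming from the shift only raising the degree of $a_r$, the same pairwise-overlap bound $8\varepsilon|\hf|$ via \eqref{ineq-hfp} and $|\hf|\geq\frac1\varepsilon\binom{n-3}{k-3}$, and the same $\tfrac32-27\varepsilon>1$ threshold $\varepsilon<\tfrac1{54}$. The one deviation is your detour through Lemma \ref{lem-3.5}: as stated that lemma assumes $\hf$ is intersecting, which is not available here ($\hf$ is only cross-intersecting with $\hg$), and it is also unnecessary — the paper simply bounds $|\hj_r\cap\hj_{r'}|$ by the union over the four cross pairs, giving $4\cdot 2\binom{n-3}{k-3}=8\binom{n-3}{k-3}\leq 8\varepsilon|\hf|$ directly from \eqref{ineq-hfp}, so you should replace the appeal to Lemma \ref{lem-3.5} by this trivial bound (exactly as in Claim \ref{claim-8}).
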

\begin{proof}
Suppose for contradiction that $(a_1,b_1), (a_2,b_2),(a_3,b_3)$ are pairwise disjoint and say
\[
\varrho(S_{a_rb_r}(\hf))>\frac{1}{2} -\varepsilon\mbox{ for } r=1,2,3.
\]
Define $\hj_r=\hf_{\{a_r,b_r\}}$, $r=1,2,3$. Then  for $r=1,2,3$,
\begin{align*}
|\hj_r| \geq & |\hf(a_r,b_r)|+|\hf(a_r,\overline{b_r})\cup \hf(\overline{a_r},b_r)|>  \left(\frac{1}{2}-\varepsilon\right)|\hf|.
\end{align*}
Note that  \eqref{ineq-hfp} implies for $1\leq r<r'\leq 3$ that
\[
|\hj_r\cap \hj_{r'}|< 4\cdot 2\binom{n-3}{k-3} \leq 8\varepsilon|\hf|.
\]
By $\varepsilon \leq \frac{1}{58}<\frac{1}{54}$, we have
\begin{align*}
|\hj_1\cup\hj_2\cup \hj_3| &> \frac{3}{2}|\hf|-3\varepsilon |\hf|-|\hj_1\cap \hj_2|-|\hj_1\cap \hj_3|-|\hj_2\cap \hj_3|\\[5pt]
&> \frac{3}{2}|\hf|-3\varepsilon|\hf|-24\varepsilon|\hf|\\[5pt]
&= \frac{3}{2}|\hf|-27\varepsilon|\hf|\\[5pt]
&> |\hf|,
\end{align*}
a contradiction.
\end{proof}

If both $\hf$ and $\hg$ are initial, then by Proposition \ref{prop-2.2} we are done. Thus we may assume that $\mathds{H}_1\cup\mathds{H}_2\neq\emptyset$. Without loss of generality, assume that $(n-1,n)\in \mathds{H}_1$. By Claim \ref{claim-6}, we may further assume that both $\hf$ and $\hg$ are initial on $[n-8]$. Let
\begin{align*}
&\ha=\left\{A\in \binom{[n-8]}{k-1}\colon A\cup \{n-1\}\in \hf \mbox{ or }A\cup \{n\}\in \hf  \right\}.
\end{align*}
Note that $(n-1,n)\in \hh_1$ implies
\begin{align}\label{ineq-4.11}
|\hf_{\{n-1,n\}}|=|\hf(n-1,n)|+|\hf(\overline{n-1},n)\cup \hf(n-1,\overline{n})|> \left(\frac{1}{2}-\varepsilon\right)|\hf|.
\end{align}
For $1\leq i\leq 3$,  applying Lemma \ref{lem-3.5} with $M=\binom{n-3}{k-3}+\binom{n-4}{k-3}+\binom{n-6}{k-4}>2\binom{n-5}{k-3}$, from \eqref{ineq-FQR-general} we infer
\begin{align*}
|\hf_{\{n-1,n\}}\cap \hf_{\{n-2i-1,n-2i\}}|
&\leq 3M+\binom{n-7}{k-5}+\binom{n-8}{k-5}< 6\binom{n-3}{k-3}.
\end{align*}
 By \eqref{ineq-4.11} and \eqref{ineq-hfp}, it follows that
\begin{align*}
|\ha| &> \left(\frac{1}{2}-\varepsilon\right)|\hf| -|\hf(n-1,n)| -\sum_{1\leq i\leq 3}|\hf_{\{n-1,n\}}\cap \hf_{\{n-2i-1,n-2i\}}|\\[5pt]
&> \left(\frac{1}{2}-\varepsilon\right)|\hf| - 20\binom{n-3}{k-3}.
\end{align*}
By Lemma \ref{lem-3.5} we also have $|\hf_{\{n-1,n\}}\cap \hf_{\{1,2\}}|< 6\binom{n-3}{k-3}$. Since $\varepsilon \leq \frac{1}{58}$,
\begin{align}\label{ha1bar2bar}
|\ha(\bar{1},\bar{2})|&\geq |\ha| - |\hf_{\{n-1,n\}}\cap \hf_{\{1,2\}}|\nonumber\\[5pt]
&> \left(\frac{1}{2}-\varepsilon\right)|\hf| - 26\binom{n-3}{k-3}\nonumber\\[5pt]
&\geq \frac{1}{2\varepsilon}\binom{n-3}{k-3}-\binom{n-3}{k-3} - 26\binom{n-3}{k-3}\nonumber\\[5pt]
&\geq 2\binom{n-10}{k-3}.
\end{align}
By Lemma \ref{lem3.7}, we obtain that
\begin{align}\label{ineq-3.17}
|\hg(\overline{n-1},\overline{n})|< |\hg(1,2)|+6\binom{n-3}{k-3}\overset{\eqref{ineq-hfp}}{<}8\binom{n-3}{k-3}.
\end{align}
Since $|\hg(\overline{n})|> \frac{1}{2}|\hg|$, by \eqref{ineq-3.17} it follows that
\[
|\hg(n-1,\overline{n})| =|\hg(\overline{n})|- |\hg(\overline{n-1},\overline{n})|> \frac{1}{2}|\hg|-8\binom{n-3}{k-3}.
\]
Then for $\varepsilon \leq \frac{1}{58}<\frac{1}{42}$ we have
\begin{align}\label{ineq -hb_22}
|\hg(n-1,[n-7,n])| &\geq |\hg(n-1,\overline{n})| - \sum_{i\in[n-7,n-2]}|\hg(n-1,i)|\nonumber\\[5pt]
&> \frac{1}{2}|\hg|-20\binom{n-3}{k-3}\nonumber\\[5pt]
&\geq \frac{1}{2\varepsilon}\binom{n-3}{k-3}-20\binom{n-3}{k-3}\nonumber\\[5pt]
&> \binom{n-3}{k-3}.
\end{align}

Let
\[
\ha_i= \{A\in \ha\colon A\cup \{n+1-i\}\in \hf\}, i=1,2.
\]
Clearly $|\ha(\bar{1},\bar{2})|=|\ha_1(\bar{1},\bar{2})|+|\ha_2(\bar{1},\bar{2})|$. Without loss of generality, assume that $|\ha_1(\bar{1},\bar{2})|\geq |\ha_2(\bar{1},\bar{2})|$. Then by \eqref{ha1bar2bar} we infer
\begin{align*}
|\ha_1(\bar{1},\bar{2})|>\binom{n-10}{k-3}.
\end{align*}
Now Lemma \ref{lem3.8} implies
\[
|\hg(n-1,[n-7,n])|< \binom{n-3}{k-3},
\]
contradicting \eqref{ineq -hb_22}.
\end{proof}

In the rest of this section, we give some examples and results for maximum degree problems concerning initial cross-intersecting families.

\begin{example}\label{examp-2}
Let $\hg=\binom{[k+1]}{k}$ and $\hf=\{F\in \binom{[n]}{k}\colon |F\cap [k+1]|\geq 2\}$. Clearly, $\hg,\hf$ are initial and cross-intersecting. Note that $\varrho(\hg)=\frac{k}{k+1}$, $\varrho(\hf)=\frac{2}{k+1}+o(1)$. In particular,
\[
\varrho(\hg)+\varrho(\hf) =\frac{k+2}{k+1}+o(1)>1.
\]
Moreover,
\[
|\hg|+|\hf| =\binom{k+1}{k}+\sum_{2\leq i\leq k} \binom{k+1}{i}\binom{n-k-1}{k-i}.
\]
\end{example}

Set
\begin{align*}
g(n,k) &=\binom{k+1}{k}+\sum_{2\leq i\leq k} \binom{k+1}{i}\binom{n-k-1}{k-i}=\left(\binom{k+1}{2}+o(1)\right)\binom{n-2}{k-2}.
\end{align*}
Let us mention a recent result.

\begin{thm}[\cite{F2022}]
If $\hf,\hg \subset \binom{[n]}{k}$ are cross-intersecting, initial, $n\geq 2k$ then
\[
|\hf|+|\hg|\leq g(n,k),
\]
 and for $n> 2k>6$ in case of equality $(\hf, \hg)$ are as in  Example \ref{examp-2}.
\end{thm}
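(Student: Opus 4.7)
The plan is to split $|\hf|+|\hg|$ according to containment of the element $1$,
\[
|\hf|+|\hg|=|\hf(1)|+|\hg(1)|+|\hf(\bar 1)|+|\hg(\bar 1)|,
\]
and to exploit the stronger cross-intersecting properties granted by initiality. Since $\hf,\hg$ are initial and cross-intersecting, Fact~\ref{fact-2} gives that $\hf(\bar 1)$ and $\hg(\bar 1)$ are cross $2$-intersecting in $\binom{[2,n]}{k}$, and all four subfamilies $\hf(1),\hg(1)\subset\binom{[2,n]}{k-1}$, $\hf(\bar 1),\hg(\bar 1)\subset\binom{[2,n]}{k}$ are initial on $[2,n]$. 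Moreover \eqref{ineq-shifted} yields $\partial\hh(\bar 1)\subseteq\hh(1)$ for $\hh\in\{\hf,\hg\}$, linking the four pieces.

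WLOG assume $|\hg(\bar 1)|\leq|\hf(\bar 1)|$ and distinguish three cases by the size of $\hg(\bar 1)$. When $\hg(\bar 1)=\emptyset$ the family $\hg$ is contained in the star at $1$; the bound in this degenerate configuration is handled by the (implicit) non-triviality present in the hypothesis of the theorem. When $\hg(\bar 1)$ is a single set, the $\prec$-minimality forced by initiality of $\hg$ on $[2,n]$ identifies that set as $[2,k+1]$. Initiality of $\hg$ on all of $[n]$ then forces every $G'\prec[2,k+1]$ in $\binom{[n]}{k}$ to lie in $\hg$, and these are precisely the $k$-subsets of $[k+1]$; hence $\binom{[k+1]}{k}\subseteq\hg$. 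The cross-intersecting condition with $\binom{[k+1]}{k}$ then forces $|F\cap[k+1]|\geq 2$ for every $F\in\hf$. This reproduces exactly the structure of Example~\ref{examp-2}, and summing the resulting bounds yields $|\hf|+|\hg|\leq g(n,k)$ with equality at the extremal configuration.

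The main obstacle is the remaining case $|\hg(\bar 1)|\geq 2$. Here one combines Hilton's Lemma applied to the cross-intersecting pair $(\hg(\bar 1),\hf(1))$ with the Tokushige product bound \eqref{ineq-prodtintersecting} for the cross $2$-intersecting pair $(\hf(\bar 1),\hg(\bar 1))$. The strategy is to show that any gain in $|\hg|$ obtained by enlarging $\hg(\bar 1)$ beyond a singleton is strictly outweighed by the shrinkage of $|\hf(1)|$ dictated by Hilton's Lemma, and symmetrically for $|\hf(\bar 1)|$ via the product bound, so that $|\hf|+|\hg|<g(n,k)$ strictly. Quantifying this trade-off tightly enough, while simultaneously respecting all four initiality constraints and the link $\partial\hh(\bar 1)\subseteq\hh(1)$, is the crux of the argument; the assumption $n\geq 2k$ enters precisely at this binomial comparison.

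Uniqueness for $n>2k>6$ is then extracted by tracing the equality conditions in Hilton's Lemma throughout the singleton case and verifying that no initial family strictly larger than $\binom{[k+1]}{k}$ is cross-intersecting with $\{F\in\binom{[n]}{k}\colon|F\cap[k+1]|\geq 2\}$, which pins down $(\hf,\hg)$ uniquely up to isomorphism to Example~\ref{examp-2}.
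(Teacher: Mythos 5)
This theorem is only quoted in the paper from \cite{F2022}; no proof of it appears here, so there is nothing to compare your argument against, and it has to stand on its own — and as written it is a programme rather than a proof. The decisive case $|\hg(\bar 1)|\geq 2$ is exactly where you stop: no inequality is actually established, only the intention to show that the ``gain is outweighed by the shrinkage.'' Moreover, the tool you designate for that case cannot serve in the stated range: Tokushige's bound \eqref{ineq-prodtintersecting} for cross $2$-intersecting families requires $k/n<1-1/\sqrt{2}$, i.e. $n>(2+\sqrt{2})k\approx 3.41k$, so it is simply unavailable for $2k\leq n<3.41k$; and being a bound on the product $|\hf(\bar 1)|\,|\hg(\bar 1)|$ it does not by itself yield the exact additive bound $g(n,k)$, let alone the equality characterization for $n>2k>6$.

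The singleton case is also not closed, and in a way that matters. From $\hg(\bar 1)=\{[2,k+1]\}$ you correctly deduce $\binom{[k+1]}{k}\subseteq\hg$ and $\hf\subseteq\{F\colon|F\cap[k+1]|\geq 2\}$, but these give a \emph{lower} bound on $|\hg|$ and an upper bound on $|\hf|$; nothing yet caps $|\hg|$. Concretely, $\hg=\{G\colon 1\in G\}\cup\{[2,k+1]\}$ and $\hf=\{F\colon 1\in F,\ F\cap[2,k+1]\neq\emptyset\}$ are initial and cross-intersecting, satisfy both of your derived conclusions, have $\hg(\bar 1)$ a singleton, yet $|\hf|+|\hg|=2\binom{n-1}{k-1}-\binom{n-k-1}{k-1}+1$, which exceeds $g(n,k)$ once $n$ is large (for $n=20$, $k=3$ this is $223>104$). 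This shows two things: (i) the non-triviality of both families (the ``non-trivial'' in the title of \cite{F2022}, i.e. $\hf(\bar 1)\neq\emptyset\neq\hg(\bar 1)$) is not a dispensable aside to be ``handled implicitly'' — without it the stated bound is false, and with it it must actually be used; and (ii) even granting non-triviality, your singleton case still requires a genuine quantitative argument trading $|\hg(1)|$ off against $|\hf(\bar 1)|$ (a Hilton-type comparison), which the phrase ``summing the resulting bounds'' does not supply. The sound parts of your set-up — Fact \ref{fact-2}, the inclusion $\partial\hh(\bar 1)\subseteq\hh(1)$, and the identification of Example \ref{examp-2} in the boundary configuration — are only the beginning; the quantitative heart of the theorem, uniformly down to $n=2k$, is missing.
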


\begin{example}\label{examp-3}
Fix $k,\ell\geq 1$, $n>k+\ell$ and  $q<k+\ell$. Set
\[
\he(n,k,q,a)=\left\{E\subset \binom{[n]}{k}\colon |E\cap [q]|\geq a\right\}.
\]
Suppose that $a+b=q+1$. Then $\he(n,k,q,a)$ and $\he(n,k,q,b)$ are initial, cross-intersecting and
\[
\varrho(\he(n,k,q,a))=\frac{a}{q}+o(1), \ \varrho(\he(n,k,q,b))=\frac{b}{q}+o(1),
\]
i.e., $\varrho(\he(n,k,q,a))+\varrho(\he(n,k,q,b))\sim 1+\frac{1}{q}$.
\end{example}

Let us mention an old result showing that the example is in some sense best possible.

\begin{prop}[\cite{F87}]
Suppose that $\hf,\hg\subset \binom{[n]}{k}$ are initial and cross-intersecting. Then for any $F\in \hf, G\in\hg$, there exists $q=q(F,G)$ such that
\begin{align}
|F\cap [q]| +|G\cap [q]|\geq q+1.
\end{align}
\end{prop}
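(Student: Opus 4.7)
The plan is to proceed by contradiction. Suppose $F\in \hf$ and $G\in \hg$ are such that
\[
|F\cap [q]|+|G\cap [q]|\leq q \quad \text{for every } q\in\{0,1,\ldots,n\},
\]
and derive a contradiction with cross-intersection by exhibiting some $G'\in \hg$ with $G'\cap F=\emptyset$. The key leverage is that the failure of the desired inequality at every threshold $q$ is precisely the Hall-type condition that allows us to rearrange $G$ into an initial-shift $G'\prec G$ that avoids $F$ entirely; initiality of $\hg$ then places $G'$ in $\hg$.

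Concretely, write $G=(y_1<y_2<\cdots<y_k)$ and list the elements of $[n]\setminus F$ in increasing order as $z_1<z_2<\cdots$. I would set $G':=\{z_1,z_2,\ldots,z_k\}$ and verify $z_i\leq y_i$ for all $1\leq i\leq k$. Since $|G\cap[y_i]|=i$, the contradiction-hypothesis applied at $q=y_i$ gives $|F\cap[y_i]|\leq y_i-i$, so the number of elements of $[n]\setminus F$ lying in $[y_i]$ is at least $y_i-(y_i-i)=i$; consequently $z_i\leq y_i$. This yields $G'\prec G$, hence $G'\in\hg$ by initiality, while $G'\subset [n]\setminus F$ means $F\cap G'=\emptyset$, contradicting the cross-intersecting hypothesis on $\hf,\hg$.

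The only mildly delicate point is the bookkeeping of the greedy construction: one must check that the $z_i$ exist (i.e.\ that $|[n]\setminus F|\geq k$, which is immediate since $n>k+\ell\geq 2k$ is not needed — merely $n\geq k+1$ suffices, and the hypothesis even gives $n>k+\ell$). Everything else is a one-line application of initiality and cross-intersection, so I expect no real obstacle; the whole argument fits comfortably in a few lines once the Hall-style inequality is identified as the right reformulation of the (negated) conclusion.
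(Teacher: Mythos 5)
Your proof is correct, and it is essentially the standard argument for this fact (the paper itself states the proposition without proof, citing \cite{F87}): negate the conclusion, replace $G$ by the first $k$ elements of $[n]\setminus F$, observe this set precedes $G$ in the shifting order, and contradict cross-intersection via initiality of $\hg$. The only blemish is your aside about the existence of the $z_i$: the claim that ``$n\geq k+1$ suffices'' is false (you need $k$ elements of $[n]\setminus F$, i.e.\ $n\geq 2k$), and there is no hypothesis $n>k+\ell$ in this proposition; but this is harmless, since your own counting step applied at $i=k$ (or the negated inequality at $q=n$, which forces $2k\leq n$) already shows that $[y_k]$ contains at least $k$ elements of $[n]\setminus F$, so $z_1,\ldots,z_k$ exist.
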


For cross-intersecting families of relatively large size one can bound $\varrho$ for each of them.

\begin{prop}
Suppose that $\hf,\hg\subset \binom{[n]}{k}$ are initial, cross-intersecting and
\[
\min\{|\hf|,|\hg|\}>\binom{n}{k-3}.
\]
Then
\[
\varrho(\hf)\geq \frac{1}{2} \mbox{ and } \varrho(\hg)\geq \frac{1}{2}.
\]
\end{prop}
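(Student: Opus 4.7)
The plan is to argue by contradiction. Since the hypothesis is symmetric in $\hf$ and $\hg$, it suffices to derive a contradiction from $\varrho(\hf) < \tfrac{1}{2}$. Since $\hf$ is initial, $\varrho(\hf) = |\hf(1)|/|\hf|$ and \eqref{ineq-shifted} gives $\partial\hf(\bar 1) \subset \hf(1)$; so the assumption reads
\[
|\partial \hf(\bar 1)|\ \leq\ |\hf(1)|\ <\ |\hf(\bar 1)|.
\]

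By Fact \ref{fact-2}, $\hf(\bar 1)$ and $\hg(\bar 1)$ are cross $2$-intersecting $k$-graphs on $[2,n]$. If $\hf(\bar 1)$ were pseudo $2$-intersecting, then the Katona shadow inequality for pseudo $t$-intersecting families (extending \eqref{ineq-ishadow} via \cite{F91}) would give $|\partial\hf(\bar 1)|/|\hf(\bar 1)| \geq \binom{2k-2}{k-1}/\binom{2k-2}{k} = k/(k-1) > 1$, contradicting the displayed inequality. Hence $\hf(\bar 1)$ is not pseudo $2$-intersecting, and Proposition \ref{fk2021} (with $t=2$) forces $\hg(\bar 1)$ to be pseudo $3$-intersecting. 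Then \eqref{ineq-walk} yields $|\hg(\bar 1)| \leq \binom{n-1}{k-3}$, so the size hypothesis $|\hg| > \binom{n}{k-3}$ gives
\[
|\hg(1)|\ =\ |\hg|-|\hg(\bar 1)|\ >\ \binom{n}{k-3}-\binom{n-1}{k-3}\ =\ \binom{n-1}{k-4}.
\]

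To close the loop, I would apply Theorem \ref{F76} (the cross $t$-intersecting extension of Katona's shadow inequality) to the cross $1$-intersecting pair $\hf(\bar 1) \subset \binom{[2,n]}{k}$ and $\hg(1) \subset \binom{[2,n]}{k-1}$ with $t=1$ and $\ell_1 = \ell_2 = 1$. Both shadow-ratio constants $\binom{2k-1}{k-1}/\binom{2k-1}{k}$ and $\binom{2k-3}{k-2}/\binom{2k-3}{k-1}$ equal $1$, so either $|\partial \hf(\bar 1)| \geq |\hf(\bar 1)|$, which immediately contradicts the opening inequality, or $|\partial \hg(1)| \geq |\hg(1)|$.

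The main obstacle is the remaining alternative $|\partial\hg(1)| \geq |\hg(1)|$: the task is to convert this structural constraint, together with the lower bound $|\hg(1)| > \binom{n-1}{k-4}$, into a contradiction. The strategy is to invoke Kruskal--Katona in cascade form on the initial family $\hg(1) \subset \binom{[2,n]}{k-1}$: the shadow-ratio inequality $|\partial\hg(1)| \geq |\hg(1)|$ tightly constrains $\hg(1)$ to sit inside a small simplex of roughly $\binom{2k-3}{k-1}$ elements, incompatible with the $\binom{n-1}{k-4}$ lower bound once $n$ is large compared to $k$. Should a secondary ``tail'' $(k-2)$-shadow appear, it can be absorbed by iterating Theorem \ref{F76} with a higher shadow order $\ell_2$ on the $\hg$-side, or by invoking the generalized form of Fact \ref{fact-2} (that $\hf(\overline{[s]})$ and $\hg(\overline{[s]})$ are cross $(s+1)$-intersecting for initial cross-intersecting families) to push the argument one level deeper. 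Threading the resulting numerical comparison is the most delicate step of the proof.
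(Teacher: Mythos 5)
Your reduction is sound as far as it goes: from $\varrho(\hf)<\tfrac12$ you correctly get $|\partial\hf(\bar1)|\leq|\hf(1)|<|\hf(\bar1)|$, rule out $\hf(\bar1)$ being pseudo $2$-intersecting via \eqref{ineq-ishadow}, and conclude from Proposition \ref{fk2021} and \eqref{ineq-walk} that $\hg(\bar1)$ is pseudo $3$-intersecting, whence $|\hg(1)|>\binom{n-1}{k-4}$. The gap is in the closing step, and it is not a technicality: the surviving alternative $|\partial\hg(1)|\geq|\hg(1)|$ from the cross shadow theorem carries essentially no information, because Kruskal--Katona bounds shadows from below in terms of sizes, not sizes from above in terms of shadows. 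A shadow ratio $\geq 1$ is compatible with arbitrarily large families: the initial star $\{G\in\binom{[2,n]}{k-1}\colon 2\in G\}$ satisfies $|\partial\hg(1)|\geq|\hg(1)|$ while having size $\binom{n-2}{k-2}$, vastly larger than $\binom{2k-3}{k-1}$. So there is no contradiction to be extracted from that branch against $|\hg(1)|>\binom{n-1}{k-4}$ (note also that the proposition carries no hypothesis of the form ``$n$ large compared with $k$'', which your intended numerical comparison would need). Structurally, after assuming $\varrho(\hf)<\tfrac12$ you only use the largeness of $|\hg|$ to constrain $\hg$ itself; the contradiction has to come from transferring information about $\hg$ back onto $\hf$, and your chain never does that.

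The missing idea --- and the paper's (direct, contradiction-free) argument --- is to use $|\hg|>\binom{n}{k-3}$ together with initiality to pin down a concrete member of $\hg$: by \eqref{ineq-walk}, $\hg$ is not pseudo $3$-intersecting, and the $\prec$-minimal set failing the pseudo $3$-intersecting condition is $(1,2,4,6,\ldots,2k-2)$, so initiality forces $(1,2,4,6,\ldots,2k-2)\in\hg$ (and likewise in $\hf$). Cross-intersection then excludes the disjoint set $(3,5,7,\ldots,2k+1)$ from $\hf$, and by initiality this says precisely that $\hf(\bar1)$ is pseudo intersecting. Since \eqref{ineq-ishadow} remains valid for pseudo $t$-intersecting families, $|\partial\hf(\bar1)|\geq|\hf(\bar1)|$, which together with \eqref{ineq-shifted} gives $|\hf(1)|\geq|\hf(\bar1)|$, i.e.\ $\varrho(\hf)\geq\tfrac12$, and symmetrically for $\hg$. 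In short, the cross-intersecting hypothesis must be used to convert the size assumption on one family into a pseudo intersecting property of the \emph{other} family; the route through Fact \ref{fact-2} and the cross shadow theorem, as you set it up, dead-ends.
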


\begin{proof}
In view of \eqref{ineq-walk}, $\min\{|\hf|,|\hg|\}>\binom{n}{k-3}$ implies
\[
(1,2,4,6,\ldots,2k-2)\in \hf\cap \hg.
\]
It follows that $(3,5,\ldots,2k+1)\notin \hf\cup \hg$, i.e.,
$\hf(\bar{1})$ and $\hg(\bar{1})$ are pseudo intersecting. Then by \eqref{ineq-shifted} and \eqref{ineq-ishadow}  we infer
\[
|\hf(1)|\geq |\partial \hf(\bar{1})|\geq |\hf(\bar{1})|\mbox{ and } |\hg(1)|\geq |\partial \hg(\bar{1})|\geq |\hg(\bar{1})|,
 \]
 implying the statement.
\end{proof}

To prove \eqref{ineq-conj} below we need a simple analytic inequality.

\begin{fact}\label{fact-3.13}
Let $0<a\leq A$, $0<b\leq B$ then
\[
\frac{a+b}{A+B} \geq \min\left\{\frac{a}{A},\frac{b}{B}\right\}.
\]
\end{fact}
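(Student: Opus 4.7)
The plan is to prove the standard mediant inequality by a one-line algebraic manipulation after a normalizing reduction. Without loss of generality I would assume $\frac{a}{A} \leq \frac{b}{B}$, so that $\min\{\frac{a}{A},\frac{b}{B}\} = \frac{a}{A}$, and then reduce the claim $\frac{a+b}{A+B} \geq \frac{a}{A}$ to showing $A(a+b) \geq a(A+B)$, which simplifies to $Ab \geq aB$. This last inequality is just a restatement of the assumption $\frac{a}{A} \leq \frac{b}{B}$ after clearing denominators (both $A,B>0$).

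More concretely, the steps in order would be: (i) observe that by the symmetry between $(a,A)$ and $(b,B)$ in the statement, one may suppose $\frac{a}{A} \leq \frac{b}{B}$; (ii) cross-multiply using the positivity hypothesis $A,B>0$ to rewrite this as $aB \leq bA$; (iii) add $aA$ to both sides to obtain $a(A+B) \leq (a+b)A$; (iv) divide by the positive quantity $A(A+B)$ to conclude $\frac{a+b}{A+B} \geq \frac{a}{A} = \min\{\frac{a}{A},\frac{b}{B}\}$.

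There is really no obstacle here — the statement is elementary, and the hypothesis $0<a\le A$, $0<b\le B$ is only used to guarantee positivity of denominators (the inequality $a\le A$, $b\le B$ itself is not needed, only $A,B>0$ and that all ratios make sense). The only thing to be careful about is that the reduction step is genuinely symmetric, so one should explicitly invoke the WLOG or else treat both cases. Equality holds exactly when $\frac{a}{A}=\frac{b}{B}$, which is worth noting but not required by the statement.
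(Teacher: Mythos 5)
Your proof is correct and follows essentially the same route as the paper: both start from the same WLOG $\frac{a}{A}\leq\frac{b}{B}$ and finish with a one-line algebraic manipulation (the paper substitutes $b'=\frac{aB}{A}\leq b$ so that the mediant of $\frac{a}{A}$ and $\frac{b'}{B}$ equals $\frac{a}{A}$, which is just your cross-multiplication step in disguise). Your observation that only $A,B>0$ is actually needed is also accurate.
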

\begin{proof}
Without loss of generality $\frac{a}{A}\leq \frac{b}{B}$. Set $b'=\frac{aB}{A}$ and note $b'\leq b$. Set $c=\frac{a}{A}=\frac{b'}{B}$. Then
\[
\frac{a+b}{A+B} \geq \frac{a+b'}{A+B}=c=\frac{a}{A}.
\]
\end{proof}

\begin{prop}\label{lem-1}
Suppose that $\hf\subset \binom{[n]}{k}$, $\hg\subset \binom{[n]}{\ell}$  and  $\hf$, $\hg$ are non-empty, initial and cross-intersecting then
\begin{align}\label{ineq-conj}
\varrho(\hf)+\varrho(\hg) \geq 1.
\end{align}
\end{prop}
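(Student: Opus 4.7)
The plan is to reduce the target to the equivalent product inequality $|\hf(1)|\cdot|\hg(1)|\geq|\hf(\bar 1)|\cdot|\hg(\bar 1)|$, which follows from $\varrho(\hf)+\varrho(\hg)\geq 1$ by clearing denominators and using $|\hf|=|\hf(1)|+|\hf(\bar 1)|$ (and similarly for $\hg$). If $\hf(\bar 1)=\emptyset$ or $\hg(\bar 1)=\emptyset$ then $\varrho=1$ on that side and we are immediately done, so I may assume both are non-empty. The key structural input is Fact \ref{fact-2}, which tells us that $\hf(\bar 1)$ and $\hg(\bar 1)$ are initial and cross $2$-intersecting as subfamilies of $\binom{[2,n]}{k}$ and $\binom{[2,n]}{\ell}$; the main engine will be Proposition \ref{fk2021} applied with $t=2$.

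In the symmetric case where both $\hf(\bar 1)$ and $\hg(\bar 1)$ are pseudo $2$-intersecting, the pseudo-intersecting extension of the Katona shadow inequality \eqref{ineq-ishadow} gives $|\partial\hf(\bar 1)|\geq\frac{k}{k-1}|\hf(\bar 1)|$ and $|\partial\hg(\bar 1)|\geq\frac{\ell}{\ell-1}|\hg(\bar 1)|$. Combined with the inclusions $|\hf(1)|\geq|\partial\hf(\bar 1)|$ and $|\hg(1)|\geq|\partial\hg(\bar 1)|$ from \eqref{ineq-shifted}, this yields $|\hf(1)||\hg(1)|\geq\frac{k\ell}{(k-1)(\ell-1)}|\hf(\bar 1)||\hg(\bar 1)|$, which is strictly larger than $|\hf(\bar 1)||\hg(\bar 1)|$ for $k,\ell\geq 2$, finishing that case. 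In the other alternative of Proposition \ref{fk2021}, one of the two families, say $\hf(\bar 1)$, is pseudo $3$-intersecting, which improves the shadow ratio to $|\partial\hf(\bar 1)|/|\hf(\bar 1)|\geq k/(k-2)$; if at the same time $\hg(\bar 1)$ is pseudo $2$-intersecting, the same multiplicative argument closes the case.

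The hard part will be the remaining subcase in which $\hf(\bar 1)$ is pseudo $3$-intersecting but $\hg(\bar 1)$ fails to be pseudo $2$-intersecting, because there is no direct Katona shadow bound on $\hg(\bar 1)$. My plan here would be to apply Proposition \ref{fk2021} a second time (with $t=1$) to the cross $1$-intersecting pair $(\hf(1),\hg(\bar 1))$, and, if necessary, to the iterated cross $(s{+}1)$-intersecting pairs $(\hf(\overline{[s]}),\hg(\overline{[s]}))$ (which are indeed cross $(s{+}1)$-intersecting by the same proof as Fact \ref{fact-2}), in order to force $\hf(1)$ also to be pseudo $2$-intersecting. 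The walk size inequality \eqref{ineq-walk} then forces $|\hf|$ to be very small, while the cross-intersecting shadow inequality \eqref{ineq-gshadow} applied to the deeper derived pair can be used to lower-bound $|\partial\hg(\bar 1)|/|\hg(\bar 1)|$. Aggregating these ratios via Fact \ref{fact-3.13}, the final step is to check that they combine to give $|\hf(1)||\hg(1)|\geq|\hf(\bar 1)||\hg(\bar 1)|$; this verification is where the bulk of the technical work is expected to lie.
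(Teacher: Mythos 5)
Your reduction to the product inequality $|\hf(1)|\,|\hg(1)|\ge |\hf(\bar 1)|\,|\hg(\bar 1)|$ is correct (for initial families the maximum degree sits at the element $1$), and the cases you actually settle are fine: when both $\hf(\bar 1),\hg(\bar 1)$ are pseudo $2$-intersecting, or when one is pseudo $3$-intersecting and the other pseudo $2$-intersecting, the pseudo version of \eqref{ineq-ishadow} together with \eqref{ineq-shifted} does give the required product bound. The genuine gap is exactly the case you defer. Proposition \ref{fk2021} is a one-sided dichotomy: in the bad branch it only asserts that $\hf(\bar 1)$ is pseudo $3$-intersecting and says nothing about $\hg(\bar 1)$, so you have no lower bound on $|\hg(1)|/|\hg(\bar 1)|$; the factor $\tfrac{k}{k-2}$ gained on the $\hf$-side would have to be matched by $|\hg(1)|\ge\tfrac{k-2}{k}|\hg(\bar 1)|$, and the only unconditional estimate available ($\varrho(\hg)\ge 1/k$, since any single $F\in\hf$ is a transversal of $\hg$) is far too weak. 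Your second application of Proposition \ref{fk2021} to the pair $(\hf(1),\hg(\bar 1))$ does close two of its three outcomes (if both are pseudo $1$-intersecting, or if $\hg(\bar 1)$ is pseudo $2$-intersecting, the shadow ratios multiply out to at least $1$), but in the remaining outcome --- $\hf(1)$ pseudo $2$-intersecting and still no information on $\hg(\bar 1)$ --- the tools you invoke do not help: \eqref{ineq-walk} then gives an \emph{upper} bound on $|\hf(1)|$ and hence on $|\hf|$, which yields no lower bound on $\varrho(\hf)$, on $\varrho(\hg)$, or on the product, and \eqref{ineq-gshadow} is again an either/or that may only ever bound the $\hf$-side. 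As written, the hard case is open, so the proposal is incomplete.

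For comparison, the paper's proof avoids shadow theorems altogether: it inducts on $n$, the base case $n\le k+\ell$ following by averaging ($\varrho(\hf)\ge k/n$, $\varrho(\hg)\ge \ell/n$), and for $n>k+\ell$ it splits along the \emph{last} element into $\hf(n),\hf(\bar n),\hg(n),\hg(\bar n)$; all relevant pairs are again initial and cross-intersecting (for the pair $\hf(n),\hg(n)$ this uses initiality, and is precisely where splitting along the element $1$ would fail: $\hf(1),\hg(1)$ need not be cross-intersecting, as the two full stars at $1$ show), and the induction hypothesis combined with the mediant inequality (Fact \ref{fact-3.13}) gives \eqref{ineq-conj}. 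If you want to keep your shadow-based framework you need an extra idea that produces simultaneous control of both families in the bad branch of Proposition \ref{fk2021}; otherwise I would switch to an induction of the paper's type.
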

\begin{proof}
Note that we do not assume $n\geq k+\ell$. Indeed, for $n\leq k+\ell$ by averaging $\varrho(\hf)\geq \frac{k}{n}$, $\varrho(\hg)\geq \frac{\ell}{n}$ and these imply \eqref{ineq-conj}. From now on we may assume $n>k+\ell$ and use induction on $n$.

The cases $k=1,\ell=1$ are trivial.  Consider the two vs two families $\hf(n)$, $\hf(\bar{n})$ and  $\hg(n)$, $\hg(\bar{n})$ which are cross-intersecting and initial. The cross-intersecting property holds trivially for three of the pairs without the assumption of initiality. As to the pair $\hf(n),\hg(n)$ initiality implies the cross-intersecting property (cf. \cite{F87}). Set $|\hf(n)|=R_1$, $|\hf(\bar{n})|=R_0$, $|\hg(n)|=Q_1$, $|\hg(\bar{n})|=Q_0$. Set also $|\hf(1,n)|=r_1$, $|\hf(1,\bar{n})|=r_0$,  $|\hg(1,n)|=q_1$, $|\hg(1,\bar{n})|=q_0$. Note that $\hf\neq \emptyset \neq \hg$ implies $R_0,Q_0\geq 1$.
 If $Q_1=0=R_1$, then $\hf,\hg\subset 2^{[n-1]}$. By the induction hypothesis,
 \[
 \varrho(\hf)+\varrho(\hg)= \varrho(\hf(\bar{n}))+\varrho(\hg(\bar{n}))\geq 1.
 \]

If $Q_1\neq 0$, $R_1\neq 0$, by the induction hypothesis we have
\begin{align}\label{ineq-IH}
\frac{r_i}{R_i}+\frac{q_j}{Q_j} \geq 1 \mbox{ for $i=0,1$ and $j=0,1$.}
\end{align}
By Fact \ref{fact-3.13},
\begin{align*}
\varrho(\hf)& =\frac{r_0+r_1}{R_0+R_1}\geq \min\left\{\frac{r_i}{R_i}\colon i=0,1\right\},\\[5pt]
\varrho(\hg)& =\frac{q_0+q_1}{Q_0+Q_1}\geq \min\left\{\frac{q_j}{Q_j}\colon j=0,1\right\}.
\end{align*}
Using \eqref{ineq-IH},  $\varrho(\hf)+\varrho(\hg)\geq 1$ follows.  If one of $Q_1,R_1$ equals 0, say $Q_1=0,R_1\neq 0$, then
 \[
 \varrho(\hg) =\frac{q_0}{Q_0}.
 \]
 Using \eqref{ineq-IH} with $j=0$, $\varrho(\hf)+\varrho(\hg)\geq 1$ follows.
\end{proof}

The finite projective plane is the standard example showing that $\varrho(\hf)$ can be quite small even for intersecting families.

\begin{example}
Let $q\geq 2$ be a prime power and consider
\[
\hf=\hg=\{\mbox{the lines of the projective plane of order } q\}.
\]
Note that this is an intersecting $(q+1)$-graph on $q^2+q+1$ vertices and regular of degree $q+1$.
Thus
\[
\varrho(\hf)=\varrho(\hg)=\frac{q+1}{q^2+q+1} \mbox{ and } \varrho(\hf)+\varrho(\hg)=\frac{2(q+1)}{q^2+q+1}<1.
\]
\end{example}

 For $\hf\subset \binom{[n]}{k}$, $\hg\subset \binom{[n]}{\ell}$  and  $\hf$, $\hg$  non-empty and cross-intersecting,  one can only get
 \[
 \varrho(\hf)+\varrho(\hg) \geq \frac{1}{\ell} +\frac{1}{k}
 \]
 and this is best possible as shown by the next example.

\begin{example}
For $k,\ell\geq 2$ define
\begin{align*}
\hp &= \left\{P_1,\ldots,P_\ell\colon P_1,\ldots,P_\ell \mbox{ pairwise disjoint $k$-sets}\right\},\\[5pt]
\hr &=\hht^{(\ell)}(\hp) =\left\{R\colon |R|=\ell, |R\cap P_i|=1, 1\leq i\leq \ell\right\}.
\end{align*}
Then $\hp$ and $\hr$ are cross-intersecting, regular and  $\varrho(\hp)=\frac{1}{\ell}$, $\varrho(\hr)=\frac{1}{k}$.

On the other hand, if $\emptyset \neq \hp \subset \binom{[n]}{k}$, $\emptyset \neq \hr \subset \binom{[n]}{\ell}$ and $\hp,\hr$ are cross-intersecting then one has always $\varrho(\hp)\geq \frac{1}{\ell}$, $\varrho(\hr)\geq \frac{1}{k}$.
\end{example}

\section{Maximum degree results for $t$-intersecting families}

In this section we consider $t$-intersecting families.

Let us recall the $t$-covering number $\tau_t(\hf)$:
\[
\tau_t(\hf) =\min\left\{|T|\colon |T\cap F|\geq t \mbox{ for all } F\in \hf \right\}.
\]
It should be clear that $\tau_t(\hf)=t$ if and only if $\hf$ is a $t$-star. Proposition \ref{prop-1.3} yields
\begin{align}\label{ineq-4.-1}
\varrho(\hf)\geq \frac{t}{\tau_t(\hf)}
\end{align}
for any $t$-intersecting family $\hf\subset \binom{[n]}{k}$.

In the case $\tau_t(\hf)=t+1$ one can improve on \eqref{ineq-4.-1}.

\begin{prop}
Suppose that $\hf\subset \binom{[n]}{k}$ is $t$-intersecting, $n\geq 2k$, $\tau_t(\hf)\leq t+1$ and $\hf$ is saturated. Then $\varrho(\hf)>\frac{t+1}{t+2}$.
\end{prop}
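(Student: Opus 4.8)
The statement claims that a saturated $t$-intersecting family $\hf\subset\binom{[n]}{k}$ with $\tau_t(\hf)\le t+1$ and $n\ge 2k$ has $\varrho(\hf)>\frac{t+1}{t+2}$. Since $\tau_t(\hf)=t$ happens only when $\hf$ is a $t$-star, in which case $\varrho(\hf)=1$, the interesting case is $\tau_t(\hf)=t+1$. Fix a $t$-transversal $T$ with $|T|=t+1$. The plan is to show that one of the $t+1$ elements of $T$ lies in a proportion exceeding $\frac{t+1}{t+2}$ of the edges. Writing $T=\{x_1,\ldots,x_{t+1}\}$, every $F\in\hf$ meets $T$ in at least $t$ elements, so $F$ either contains all of $T$ or misses exactly one $x_i$; let $\hf_i=\{F\in\hf\colon T\setminus F=\{x_i\}\}$ and $\hf_0=\{F\in\hf\colon T\subset F\}$. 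Then $|\hf|=|\hf_0|+\sum_{i}|\hf_i|$ and summing degrees over $T$ gives $\sum_{x\in T}|\hf(x)|=(t+1)|\hf_0|+t\sum_i|\hf_i|=(t+1)|\hf|-\sum_i|\hf_i|$. So it suffices to prove that $\min_i|\hf_i|$ — equivalently $(t+1)\min_i|\hf_i|\le\sum_i|\hf_i|$ — is small relative to $|\hf|$; concretely, $\varrho(\hf)>\frac{t+1}{t+2}$ will follow once we show $\sum_i|\hf_i| < \frac{1}{t+2}\big((t+1)|\hf_0|+t\sum_i|\hf_i|\big)$ is not quite the right packaging, so instead the cleaner route is to bound each $|\hf_i|$ directly.

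\textbf{Key step: bounding each $|\hf_i|$ via saturation.} Fix $i$, say $i=t+1$, and look at $\hf_{t+1}$, the edges containing $\{x_1,\ldots,x_t\}$ but not $x_{t+1}$. I would argue that these are few because of saturation. The point is that $T'=\{x_1,\ldots,x_t\}$ is \emph{not} a $t$-transversal (otherwise $\tau_t(\hf)\le t$), so there is an edge $F^*\in\hf$ with $|F^*\cap T'|\le t-1$; combined with $|F^*\cap T|\ge t$ this forces $x_{t+1}\in F^*$ and $|F^*\cap T'|=t-1$. Every $F\in\hf_{t+1}$ must satisfy $|F\cap F^*|\ge t$; since $F\supset T'$ contributes only $|T'\cap F^*|=t-1$ to this intersection, $F$ must contain an element of $F^*\setminus(T'\cup\{x_{t+1}\})$, a set of size $k-t$. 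Hence $\hf_{t+1}\subset\{F\colon T'\subset F,\ F\cap(F^*\setminus(T'\cup\{x_{t+1}\}))\ne\emptyset\}$, giving $|\hf_{t+1}|\le (k-t)\binom{n-t-1}{k-t-1}$. For $|\hf_0|$ (edges containing all of $T$) we trivially have $|\hf_0|\le\binom{n-t-1}{k-t-1}$.

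\textbf{Assembling the bound.} With these estimates, $\varrho(\hf)\ge\frac{|\hf(x_{j})|}{|\hf|}$ for the index $j$ achieving $\min_i|\hf_i|$, and $|\hf(x_j)|=|\hf|-|\hf_j|$. I need a lower bound on $|\hf|$: since $\hf$ is saturated and $t$-intersecting with $\tau_t=t+1$, in particular $\hf$ cannot be too small — one should check (using $n\ge 2k$) that $\hf$ at least contains a substantial family, e.g. all $F\in\binom{[n]}{k}$ with $T\subset F$ plus enough sets in each $\hf_i$; more carefully, saturation means every $k$-set $G$ with $|G\cap F|\ge t$ for all $F\in\hf$ already lies in $\hf$, which forces $|\hf_0|=\binom{n-t-1}{k-t-1}$ exactly once one checks every $(k-t-1)$-subset of $[n]\setminus T$ can be completed. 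Then the ratio $\frac{|\hf_j|}{|\hf|}\le\frac{(k-t)\binom{n-t-1}{k-t-1}}{\text{(something}\ \ge (t+2)(k-t)\binom{n-t-1}{k-t-1})}$ needs to come out $<\frac{1}{t+2}$, which is where the hypothesis $n\ge 2k$ (so that there is enough room to build many disjoint-ish witnesses in each class) enters.

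\textbf{Expected main obstacle.} The delicate part is the lower bound on $|\hf|$, or more precisely showing that the classes $\hf_0,\hf_1,\ldots,\hf_{t+1}$ are \emph{all} comparably large — saturation is what should force this, because if some $\hf_i$ were empty then $T\setminus\{x_i\}$ plus something would give a smaller transversal or a dominated structure contradicting saturation, but making this rigorous (and quantitative enough to beat the factor $\frac{1}{t+2}$) requires care. I expect the argument to hinge on: (a) showing $|\hf_0|$ is exactly $\binom{n-t-1}{k-t-1}$ by saturation, (b) showing each $|\hf_i|\ge |\hf_0|$ or at least $\ge c\binom{n-t-1}{k-t-1}$ for a constant $c$ depending only on $t$, again by a saturation/shifting argument, and (c) the clean counting identity $\sum_{x\in T}|\hf(x)|=(t+1)|\hf|-\sum_{i}|\hf_i|$ together with the upper bounds $|\hf_i|\le(k-t)\binom{n-t-1}{k-t-1}$ to conclude $\max_{x\in T}|\hf(x)|\ge\frac{(t+1)|\hf|-\sum_i|\hf_i|}{t+1}>\frac{t+1}{t+2}|\hf|$ after verifying the arithmetic under $n\ge2k$.
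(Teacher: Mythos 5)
Your setup coincides with the paper's: take a $t$-transversal $T=\{x_1,\ldots,x_{t+1}\}$, split $\hf$ into $\hf_0$ (edges containing $T$) and $\hf_1,\ldots,\hf_{t+1}$ (edges missing exactly $x_i$), note $|\hf(\overline{x_i})|=|\hf_i|$, and use saturation to get $|\hf_0|=\binom{n-t-1}{k-t-1}$. But your key step fails. The bound $|\hf_i|\le (k-t)\binom{n-t-1}{k-t-1}$ obtained from a single witness edge $F^*$ is weaker than what is needed by a factor of about $k-t$: it exceeds $|\hf_0|$, so to convert it into $|\hf_j|<\frac{1}{t+2}|\hf|$ you are forced to postulate $|\hf|\ge (t+2)(k-t)\binom{n-t-1}{k-t-1}$. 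No such lower bound is among the hypotheses, and it is genuinely false for admissible families: the family $\ha(n,k,t)=\{A\colon |A\cap[t+2]|\ge t+1\}$ is saturated, $t$-intersecting, has $\tau_t=t+1$, and its size is only $(t+2)\binom{n-t-2}{k-t-1}+\binom{n-t-2}{k-t-2}\le (t+2)\binom{n-t-1}{k-t-1}$, smaller than your required threshold by a factor of order $k-t$. Your fallback idea (b), proving each $|\hf_i|\ge |\hf_0|$ or $\ge c\binom{n-t-1}{k-t-1}$, points in the wrong direction: what the argument needs is that the \emph{smallest} class is strictly smaller than $|\hf_0|$, and classes $\hf_i$ can in fact be very small or empty.

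The missing idea is to exploit the interaction between two classes rather than between one class and a single edge. For $i\ne j$, any $F\in\hf_i$ and $F'\in\hf_j$ share only the $t-1$ elements of $T\setminus\{x_i,x_j\}$ inside $T$, so their traces on $[n]\setminus T$ (which are $(k-t)$-sets in an $(n-t-1)$-element ground set, and $n\ge 2k$ gives $n-t-1\ge 2(k-t)$) form cross-intersecting families; by Hilton's Lemma the smaller of any two classes has size at most $\binom{n-t-2}{k-t-1}$. Hence, ordering so that $|\hf_1|$ is minimal, $|\hf_1|\le\binom{n-t-2}{k-t-1}<\binom{n-t-1}{k-t-1}=|\hf_0|$. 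Then no lower bound on $|\hf|$ is needed at all: $|\hf|=|\hf_0|+\sum_i|\hf_i|\ge |\hf_0|+(t+1)|\hf_1|>(t+2)|\hf_1|$, so $\varrho(\hf)\ge \frac{|\hf(x_1)|}{|\hf|}=1-\frac{|\hf_1|}{|\hf|}>1-\frac{1}{t+2}=\frac{t+1}{t+2}$ (and if $|\hf_1|=0$ the conclusion is immediate since $\hf_0\ne\emptyset$). This comparison of the minimum class with $|\hf_0|$, via the cross-intersecting structure you never invoke, is exactly the step your proposal lacks, and without it the "assembling" stage cannot be completed.
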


\begin{proof}
Without loss of generality let $[t+1]$ be a $t$-transversal of $\hf$, i.e., $|F\cap [t+1]|\geq t$ for all $F\in \hf$. Define
\[
\hf_i =\{F\setminus [t+1]\colon F\in \hf,F\cap [t+1]=[t+1]\setminus \{i\}\}
\]
and $\hf_0=\hf([t+1])$. By saturatedness $\hf_0=\binom{[t+2,n]}{k-t-1}$. Obviously, $\hf_i,\hf_j$ are cross-intersecting for $1\leq i<j\leq t+1$. By Hilton's Lemma, $\min\{|\hf_i|,|\hf_j|\}\leq \binom{n-t-2}{k-t-1}$. Assume by symmetry $|\hf_1|\leq |\hf_2|\leq \ldots\leq |\hf_{t+1}|$. Then
\[
|\hf_1|\leq \binom{n-t-2}{k-t-1} <\binom{n-t-1}{k-t-1}=|\hf_0|.
\]
Note that
\[
|\hf(1)| =|\hf_2|+\ldots+|\hf_{t+1}|+|\hf_0|>(t+1)|\hf_1|,\ |\hf(\bar{1})|=|\hf_1|.
\]
Thus
\[
\varrho(\hf)\geq \frac{|\hf(1)|}{|\hf(1)|+|\hf(\bar{1})|}>\frac{t+1}{t+2}.
\]
\end{proof}

{\bf\noindent Remark.} Considering all $k$-subsets of $[2k-t]$ shows that without some conditions on $|\hf|$ one cannot hope to prove better than $\varrho(\hf)\geq \frac{k}{2k-t}$.

Recently, the first author and Katona \cite{FK2021} proved the following improved version of  \eqref{ineq-ishadow} provided that $|\hf|\geq \left(1+\frac{t-1}{k+t}\right)\binom{2k-t}{k}$.

\begin{thm}[\cite{FK2021}]
Suppose that $\hf\subset \binom{[n]}{k}$ is $t$-intersecting, $1\leq \ell<t<k$, $|\hf|\geq \binom{2k-t}{k}\left(1+\frac{t+\ell}{k+t+1-\ell}\right)$. Then
\begin{align}\label{ineq-improvedKatona}
|\partial^\ell \hf|/|\hf| \geq \binom{2(k-1)-t}{k-1-\ell}/\binom{2(k-1)-t}{k-1}.
\end{align}
\end{thm}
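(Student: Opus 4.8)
The plan is to pass to an initial family by shifting and then to induct on $k$, peeling off the element $1$. First, since each $S_{ij}$ preserves $|\hf|$ and the $t$-intersecting property, and since $\partial^\ell S_{ij}(\hf)\subseteq S_{ij}(\partial^\ell\hf)$ forces $|\partial^\ell S_{ij}(\hf)|\le|\partial^\ell\hf|$, it suffices to prove the inequality for an initial $\hf$. (If $n<2k-t$ then $|\binom{[n]}{k}|<\binom{2k-t}{k}$, below the stated threshold, so the hypothesis is vacuous; hence assume $n\ge 2k-t$.) For initial $\hf$ the family $\hf(1)$ is $(t-1)$-intersecting and initial, $\hf(\bar 1)$ is $(t+1)$-intersecting by Proposition~\ref{prop-1.2} with $s=1$ and initial, and, extending \eqref{ineq-shifted}, one checks $\partial^{j}\hf(\bar 1)\subseteq\partial^{j-1}\hf(1)$ for every $j\ge 1$ (given $G\subseteq F\in\hf(\bar 1)$ with $|G|=k-j$, replace any element of $F\setminus G$ by $1$). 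Splitting $\partial^\ell\hf$ according to whether $1\in G$ then gives
\begin{align*}
|\partial^\ell\hf|=|\partial^{\ell-1}\hf(1)|+|\partial^\ell\hf(1)|,\qquad |\hf|=|\hf(1)|+|\hf(\bar 1)|,
\end{align*}
while the Katona Intersecting Shadow Theorem applied to the $(t+1)$-intersecting $\hf(\bar 1)$, combined with $\partial\hf(\bar 1)\subseteq\hf(1)$, yields $|\hf(\bar 1)|\le\frac{k-t}{k}|\hf(1)|$, i.e.\ $|\hf|\le\frac{2k-t}{k}|\hf(1)|$.

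For the induction I would use that $\frac{k}{2k-t}\binom{2k-t}{k}=\binom{2k-t-1}{k-1}$, so that the hypothesis $|\hf|\ge\binom{2k-t}{k}\bigl(1+\frac{t+\ell}{k+t+1-\ell}\bigr)$ passes to $\hf(1)$: the value $|\hf(1)|\ge\frac{k}{2k-t}|\hf|$ exceeds the threshold the theorem requires for the parameters $(k-1,t-1,\ell-1)$, and also for $(k-1,t-1,\ell)$ when $\ell<t-1$, with room to spare. Thus by the inductive hypothesis $|\partial^{\ell-1}\hf(1)|\ge\binom{2k-t-3}{k-1-\ell}/\binom{2k-t-3}{k-2}\cdot|\hf(1)|$, and $|\partial^\ell\hf(1)|$ is bounded below by the inductive hypothesis when $\ell<t-1$ and by the original Katona theorem (legitimate for $\ell=t-1$ against a $(t-1)$-intersecting family) when $\ell=t-1$; every application ultimately rests on the Katona Intersecting Shadow Theorem. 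Feeding these bounds together with $|\hf|\le\frac{2k-t}{k}|\hf(1)|$ into the two displayed identities reduces the desired inequality to a finite chain of comparisons between binomial coefficients.

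The hard part will be that the estimate $|\hf|\le\frac{2k-t}{k}|\hf(1)|$ is, in some ranges of $k,t,\ell$, slightly too weak to close the induction — precisely when $|\hf(\bar 1)|$ is close to the maximum $\frac{k-t}{k}|\hf(1)|$ it is permitted. In that regime, however, $\partial\hf(\bar 1)$ almost fills $\hf(1)$, and iterating through $\hf(\overline{[2]}),\hf(\overline{[3]}),\dots$ pins $\hf$ down to be essentially $\binom{[2k-t]}{k}$; a stability form of the Katona theorem then contradicts $|\hf|>\binom{2k-t}{k}$. Making this dichotomy quantitative — so that either the recursion gains enough from a small ratio $|\hf(\bar1)|/|\hf(1)|$, or the structure collapses to $\binom{[2k-t]}{k}$ and the size hypothesis is invoked — together with the bookkeeping of the binomial inequalities, is where the real work lies. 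An alternative route, bypassing the induction, is Frankl's lattice-path (\emph{walk}) encoding: initiality plus $t$-intersection means every walk $w(F)$, $F\in\hf$, meets the line $y=t$ by step $2k-t$, and one would show the size hypothesis forces enough of the mass onto walks meeting $y=t$ already by step $2k-t-2$, which is exactly what converts the standard walk count for the shadow from ground set $[2k-t]$ to $[2k-t-2]$.
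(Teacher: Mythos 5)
First, a caveat: this paper does not prove the statement at all --- it is quoted from \cite{FK2021} --- so there is no in-paper proof to compare against; I can only assess your proposal on its own terms. Your reduction to initial families, the identities $|\partial^\ell\hf|=|\partial^{\ell-1}\hf(1)|+|\partial^\ell\hf(1)|$ and $|\hf|=|\hf(1)|+|\hf(\bar 1)|$, the containment $\partial^j\hf(\bar1)\subseteq\partial^{j-1}\hf(1)$, and the bound $|\hf|\leq\frac{2k-t}{k}|\hf(1)|$ via Katona applied to the $(t+1)$-intersecting family $\hf(\bar1)$ are all correct. The problem is that these ingredients provably cannot close the induction, and the part of your plan that would have to supply the missing strength is only gestured at. Concretely: even in the most favourable case, feeding the inductive ratios for $(k-1,t-1,\ell-1)$ and $(k-1,t-1,\ell)$ into your identity gives, after Pascal's rule, $|\partial^\ell\hf|\geq\frac{\binom{2k-t-2}{k-1-\ell}}{\binom{2k-t-3}{k-2}}|\hf(1)|$, and combining with $|\hf(1)|\geq\frac{k}{2k-t}|\hf|$ yields a ratio that is exactly $\frac{k(2k-t-2)}{(k-1)(2k-t)}=1-\frac{t}{(k-1)(2k-t)}<1$ times the target $\binom{2k-t-2}{k-1-\ell}/\binom{2k-t-2}{k-1}$. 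So the shortfall is not confined to ``some ranges'' --- the naive recursion misses for every admissible $k,t,\ell$, and the entire content of the theorem is the mechanism that recovers this factor from the size hypothesis. Your proposed repair (a quantitative dichotomy via an unstated ``stability form'' of the Katona theorem, or alternatively a walk-counting argument) is precisely that mechanism, and it is not formulated, let alone proved; as written it is a restatement of the difficulty rather than a solution.

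There is also a smaller but genuine error in the threshold bookkeeping: the hypothesis does pass from $(k,t,\ell)$ to $(k-1,t-1,\ell-1)$ (the relevant difference is $2k+t+2-3\ell>0$ since $\ell<t<k$), but your claim that it also passes to $(k-1,t-1,\ell)$ ``with room to spare'' when $\ell<t-1$ is false in general: one needs $\frac{t+\ell}{k+t+1-\ell}\geq\frac{t+\ell-1}{k+t-1-\ell}$, which amounts to $k+1\geq t+3\ell$ and fails, e.g., for $(k,t,\ell)=(6,5,3)$. Falling back on the plain Katona bound for $|\partial^\ell\hf(1)|$ in that branch only widens the deficit computed above. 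So the skeleton (shift, peel off the element $1$, use Katona on $\hf(\bar1)$) is sound, but the heart of the proof --- converting the assumption $|\hf|\geq\binom{2k-t}{k}\bigl(1+\frac{t+\ell}{k+t+1-\ell}\bigr)$ into the extra multiplicative gain that the recursion demonstrably lacks --- is missing.
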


In the case of cross $t$-intersecting families, $t\geq 2$, we cannot apply Hilton's Lemma. To circumvent this difficulty we prove a similar albeit somewhat weaker inequality.

\begin{prop}\label{thm-4.3}
Let $n,k,\ell,t,s$ be integers, $s> t\geq 2$, $k,\ell>s$. Suppose that $\hf\subset \binom{[n]}{k}$ and $\hg\subset \binom{[n]}{\ell}$ are cross $t$-intersecting. Assume that $|\hg|> \binom{n}{\ell-s}$. Then
\begin{align}\label{ineq-4.0}
|\hf| < \binom{s-1}{t}\binom{n-s-1}{k-t}+2^s\binom{n-t-1}{k-t-1}.
\end{align}
Moreover, if $n\geq s(k-t)$, then
\begin{align}\label{ineq-4.00}
|\hf| <  \binom{s-1}{t}\binom{n-s}{k-t} +\left(\frac{2}{s-1}\binom{s}{t-1}+\binom{s-1}{t-1}+2\binom{s}{t+1}\right)\binom{n-s}{k-t-1}.
\end{align}
\end{prop}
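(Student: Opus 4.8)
\emph{Plan.} Reduce to initial families by simultaneous shifting, extract from $\hg$ a single explicit $\ell$-set $G_0$ (using that $\hg$ is too large to be pseudo $s$-intersecting), and then locate $\hf$ inside the largest initial family of $k$-sets that are $t$-transversals of $\{G_0\}$; finally estimate the size of that family.

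\emph{Step 1: reduction.} Apply the operations $S_{ij}$ simultaneously to $\hf$ and $\hg$ until both are initial; this keeps $\hf,\hg$ cross $t$-intersecting and does not change cardinalities, so we may assume $\hf,\hg$ are initial. Since $|\hg|>\binom{n}{\ell-s}$, the walk theorem \eqref{ineq-walk} (applied with $\ell,s$ in place of $k,t$) shows $\hg$ is not pseudo $s$-intersecting. As in the proof of Proposition \ref{prop-1.2}, if $G$ is pseudo $s$-intersecting and $G'\prec G$ then $|G'\cap[2m+s]|\ge|G\cap[2m+s]|$ for all $m$, so the pseudo $s$-intersecting $\ell$-sets form a $\prec$-downset; a direct check shows every $\ell$-set $\prec G_0:=\{1,\dots,s-1\}\cup\{s+1,s+3,\dots,2\ell-s+1\}$ contains $[s-1]$ as an initial segment and is pseudo $s$-intersecting, while $G_0$ itself is not. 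Hence $G_0\in\hg$, and so $|F\cap G_0|\ge t$ for every $F\in\hf$.

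\emph{Step 2: structure of $\hf$.} The set of $t$-transversals of an initial family is itself a $\prec$-downset (verified by a single shifting step: if $T$ is a $t$-transversal and $T'=(T\setminus\{j\})\cup\{i\}$ with $i<j$, $i\notin T$, then for $F\in\hf$ one replaces $F$ by $(F\setminus\{j\})\cup\{i\}\in\hf$ when $j\in F$, $i\notin F$, and checks $|T'\cap F|\ge t$ in each case). Since $\hf$ is initial and $G_0$ is a $t$-transversal of $\hf$, every $F\in\hf$ fails to lie above the $\prec$-minimal $k$-set meeting $G_0$ in fewer than $t$ points; a greedy computation identifies this set as $F^{*}=\{1,\dots,t-1\}\cup\{\text{the }k-t+1\text{ smallest elements of }[n]\setminus G_0\}$, i.e. $F^{*}=\{1,\dots,t-1\}\cup\{s,s+2,s+4,\dots\}$. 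Consequently
\[
\hf\ \subseteq\ \bigl\{F\colon F\not\succeq F^{*}\bigr\}\ =\ \bigcup_{j\ge 0}\Bigl\{F\in\tbinom{[n]}{k}\colon |F\cap[s+2j-1]|\ge t+j\Bigr\},
\]
the $j=0$ term being $\{F\colon|F\cap[s-1]|\ge t\}$.

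\emph{Step 3: counting.} The $j=0$ term has size $\sum_{i\ge t}\binom{s-1}{i}\binom{n-s+1}{k-i}$; expanding $\binom{n-s+1}{k-t}=\binom{n-s-1}{k-t}+\binom{n-s-1}{k-t-1}+\binom{n-s}{k-t-1}$ by Pascal splits off the main term $\binom{s-1}{t}\binom{n-s-1}{k-t}$ and leaves pieces of order $\binom{n-t-1}{k-t-1}$. Each $j\ge1$ term is at most $\binom{s+2j-1}{t+j}\binom{n-s-2j+1}{k-t-j}$, and for $n$ in the stated range these decrease geometrically, so the total of $j\ge1$ together with the $j=0$ correction is absorbed into $2^{s}\binom{n-t-1}{k-t-1}$, giving \eqref{ineq-4.0}. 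For \eqref{ineq-4.00} one repeats the estimates but, invoking $n\ge s(k-t)$ and \eqref{ineq-key1}, compares the error terms against $\binom{n-s}{k-t-1}$ rather than crudely against $\binom{n}{k-t-1}$, which yields the explicit coefficient $\tfrac{2}{s-1}\binom{s}{t-1}+\binom{s-1}{t-1}+2\binom{s}{t+1}$.

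\emph{Main obstacle.} The crux is Step 2 — turning the mere inequality $|\hg|>\binom{n}{\ell-s}$ into the containment $\hf\subseteq\{F\colon F\not\succeq F^{*}\}$ — which rests on pinpointing both extremal sets $G_0$ and $F^{*}$ and on the downward‑closedness of $t$‑transversals of an initial family. A secondary, purely arithmetic difficulty is to arrange the binomial bookkeeping so that the whole error fits under the tight constant $2^{s}$ (respectively the explicit coefficient of \eqref{ineq-4.00}); if the crude estimate above is not quite enough, one should additionally use the further pseudo $s$-intersecting sets $\{1,\dots,s-1\}\cup\{s,s+2,\dots\}$ (and the "shifted-to-the-top" variants forced into $\hg$ by its size) to confine the part of $\hf$ with at most $t-1$ elements in $[s-1]$ to a union of at most $2^{s}$ pseudo $1$-intersecting families, each bounded by \eqref{ineq-walk}.
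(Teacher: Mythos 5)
Your Steps 1--2 are correct and essentially coincide with the paper's setup: after shifting ad libitum you extract $G_0=(1,\dots,s-1,s+1,s+3,\dots)\in\hg$ from $|\hg|>\binom{n}{\ell-s}$ via \eqref{ineq-walk}, and your $F^{*}$ is exactly the set $Q=[t-1]\cup(s,s+2,\dots,s+2(k-t))$ that the paper uses; since $|Q\cap G_0|=t-1$, initiality alone gives $\hf\subseteq\{F\colon F\not\succeq Q\}$ (your detour through downward-closedness of $t$-transversals is correct but unnecessary). The genuine gap is Step 3, the counting. First, $\binom{s+2j-1}{t+j}\binom{n-s-2j+1}{k-t-j}$ is not an upper bound for $\{F\colon |F\cap[s+2j-1]|\ge t+j\}$; it only counts the sets meeting $[s+2j-1]$ in exactly $t+j$ elements, so the union bound as written is not justified (it can be repaired by passing to the least such $j$, which forces $s+2j-2,s+2j-1\in F$, but you do not do this). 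Second, the ``geometric decrease'' of the $j\ge 1$ terms needs $n$ to be large compared with $k$ (roughly $n\ge ck$ for an absolute constant), which is not among the hypotheses of \eqref{ineq-4.0}: for $n$ near $2k$ the terms do not decay and their sum is exponentially larger than $2^{s}\binom{n-t-1}{k-t-1}$, so your route does not give the first inequality in the stated generality.

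Third, and most importantly, the constants. The paper obtains the coefficient in \eqref{ineq-4.00} by decomposing along the trace $R=F\cap[s]$ and proving (the Claim inside its proof) that for $|R|=t-i$, $i\ge 1$, the family $\hf(R,[s])$ is pseudo $(2i+1)$-intersecting, hence of size at most $\binom{n-s}{k-t-i-1}$ by \eqref{ineq-walk}; this is what makes all traces of size below $t$ contribute only $\frac{2}{s-1}\binom{s}{t-1}\binom{n-s}{k-t-1}$ in total. Your global union over $j$ lumps these small-trace sets into the large-$j$ events and loses exactly this saving: already for $s=3$, $t=2$, $n=s(k-t)$ and $k$ large, your stated per-$j$ bounds sum to well over the allowed $\bigl(\frac{2}{s-1}\binom{s}{t-1}+\binom{s-1}{t-1}+2\binom{s}{t+1}\bigr)\binom{n-s}{k-t-1}=7\binom{n-3}{k-3}$ (and even the repaired minimal-$j$ version lands at best at the boundary), so \eqref{ineq-4.00} cannot be extracted this way. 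Your proposed fallback---covering the part of $\hf$ with at most $t-1$ elements of $[s]$ by at most $2^{s}$ pseudo $1$-intersecting families---is also too weak: for trace size $t-i$ a pseudo $1$-intersecting bound only gives $\binom{n-s}{k-t+i-1}$, which for $i\ge 1$ is of the order of the main term $\binom{n-s}{k-t}$ and arrives with multiplicity $\binom{s}{t-i}$. What is needed is precisely the pseudo $(2i+1)$-intersecting statement (proved in the paper by comparing $R\cup\tilde R$ with $Q$ under $\prec$), which your proposal never identifies; without it neither \eqref{ineq-4.0} nor \eqref{ineq-4.00} follows.
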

\begin{proof}
Assume the contrary. Without loss of generality, we can suppose that $\hf$ and $\hg$ are initial ($S_{ij}$ does not change $|\hf|, |\hg|$). Since $|\hg|>\binom{n}{\ell-s}$, we infer
\[
(1,2,\ldots,s-1,s+1,s+3,\ldots ) =:G_0\in \hg.
\]
Let $T_0\in \binom{[s-1]}{t-1}$. Then by cross $t$-intersecting property
\[
T_0\cup (s,s+2,s+4,\ldots)\notin \hf.
\]
Define $T=T_0\cup\{s\}$. Then $\hf(T,[s])\subset\binom{[s+1,n]}{k-t}$ and it is pseudo intersecting, i.e.,
\[
(s+2,s+4,\ldots,s+2(k-t))\notin \hf(T,[s]).
\]
Thus
\begin{align}\label{ineq-4.1}
|\hf(T,[s])|\leq \binom{n-s}{k-t-1}.
\end{align}
For $R\subset [s]$,
\begin{align}\label{ineq-4.2}
|\hf(R,[s])|\leq \binom{n-s}{k-|R|}.
\end{align}
We use it for $R$ with $|R|>t$ and $R=t$ but $s\notin R$.

\begin{claim}\label{claim-9}
For $R\subset[s]$ with $|R|=t-i$ and $i\geq 1$, $\hf(R,[s])$ is pseudo $(2i+1)$-intersecting.
\end{claim}
\begin{proof}
Let
\[
\tilde{R}:= (s+1,s+2,\ldots,s+2i-1,s+2i,s+2i+2,\ldots,s+2(k-t)).
\]
Set $Q= [t-1]\cup (s,s+2,\ldots,s+2(k-t))$ and note $|Q\cap G_0|=t-1$ whence $Q\notin \hf$. Since $Q\prec R\cup \tilde{R}$, $R\cup \tilde{R}\notin \hf$, i.e., $\tilde{R}\notin \hf(R,[s])$. Thus $\hf(R,[s])$ is pseudo $(2i+1)$-intersecting.
\end{proof}

 For $|R|=t-i$ with $1\leq i\leq t$, by Claim \ref{claim-9}
\[
|\hf(R,[s])|\leq \binom{n-s}{k-(t-i)-2i-1} =\binom{n-s}{k-t-i-1}.
\]
Now
\begin{align}\label{ineq-4.9}
|\hf|&=\sum_{R\subset [s]} |\hf(R,[s])|\nonumber\\[5pt]
&=\sum_{R\subset [s],|R|\leq t-1} |\hf(R,[s])|+\sum_{R\in  \binom{[s]}{t}} |\hf(R,[s])|+\sum_{R\subset [s],|R|\geq  t+1} |\hf(R,[s])|\nonumber\\[5pt]
&\leq \sum_{0\leq i\leq t-1}\binom{s}{i}\binom{n-s}{k-2t+i-1}+\binom{s-1}{t}\binom{n-s}{k-t}
+\binom{s-1}{t-1}\binom{n-s}{k-t-1}\nonumber\\[5pt]
& \qquad + \sum_{t+1\leq i\leq s}\binom{s}{i} \binom{n-s}{k-i}.
\end{align}
Using $\binom{n-s}{k-2t+i-1} < \binom{n-s}{k-t-1}$ for $i\leq t-1$ and $\binom{n-s}{k-i}\leq \binom{n-s}{k-t-1}$ for $i\geq t+1$, we conclude that
\begin{align*}
|\hf|&< \binom{s-1}{t}\binom{n-s}{k-t}+ \sum_{0\leq i\leq s}\binom{s}{i} \binom{n-s}{k-t-1}-\binom{s-1}{t}\binom{n-s}{k-t-1}\\[5pt]
&= \binom{s-1}{t}\binom{n-s-1}{k-t}+ 2^s \binom{n-s}{k-t-1}.
\end{align*}
This proves \eqref{ineq-4.0}.

If $n\geq s(k-t)$ then for $1\leq i\leq t-1$
\begin{align*}
\frac{\binom{s}{i}\binom{n-s}{k-2t+i-1}}{\binom{s}{i-1}\binom{n-s}{k-2t+i-2}} &=\frac{(s-i+1)(n-s-k+2t-i+2)}{i(k-2t+i-1)} \\[5pt]
&\geq \frac{(s-t+2)(n-s-k+t+3)}{(t-1)(k-t-2)}\\[5pt]
&> \frac{(s-t+2)(s-1)(k-t-1)}{(t-1)(k-t-2)}\\[5pt]
&> 2
\end{align*}
and
\[
\frac{\binom{n-s}{k-t-2}}{\binom{n-s}{k-t-1}} =\frac{k-t-1}{n-s-k+t+2} < \frac{k-t-1}{(s-1)(k-t-1)} \leq \frac{1}{s-1}.
\]
It follows that
\begin{align}\label{ineq-4.7}
\sum_{0\leq i\leq t-1}\binom{s}{i}\binom{n-s}{k-2t+i-1} &< \binom{s}{t-1}\binom{n-s}{k-t-2}\sum_{i=0}^\infty 2^{-i}\nonumber\\[5pt]
&=2\binom{s}{t-1}\binom{n-s}{k-t-2}\nonumber\\[5pt]
&<\frac{2}{s-1}\binom{s}{t-1}\binom{n-s}{k-t-1}.
\end{align}

For $t+1\leq i\leq s-1$,
\begin{align*}
\frac{\binom{s}{i+1} \binom{n-s}{k-i-1}}{\binom{s}{i} \binom{n-s}{k-i}}&=\frac{(s-i)(k-i)}{(i+1)(n-s-k+i+1)}\\[5pt]
&\leq \frac{(s-t-1)(k-t-1)}{(t+2)(n-s-k+t+2)}\\[5pt]
&< \frac{(s-t-1)(k-t-1)}{(t+2)(s-1)(k-t-1)}\\[5pt]
&< \frac{1}{2}.
\end{align*}
It follows that
\begin{align}\label{ineq-4.8}
\sum_{t+1\leq i\leq s}\binom{s}{i} \binom{n-s}{k-i} < \binom{s}{t+1}\binom{n-s}{k-t-1}\sum_{i=0}^\infty 2^{-i}=2\binom{s}{t+1}\binom{n-s}{k-t-1}.
\end{align}
Combining \eqref{ineq-4.9}, \eqref{ineq-4.7} and \eqref{ineq-4.8}, we conclude that
\begin{align*}
|\hf| &< \frac{2}{s-1}\binom{s}{t-1}\binom{n-s}{k-t-1}+\binom{s-1}{t}\binom{n-s}{k-t}
+\binom{s-1}{t-1}\binom{n-s}{k-t-1}\\[5pt]
&\qquad +2\binom{s}{t+1}\binom{n-s}{k-t-1}\\[5pt]
&= \binom{s-1}{t}\binom{n-s}{k-t} +\left(\frac{2}{s-1}\binom{s}{t-1}+\binom{s-1}{t-1}+2\binom{s}{t+1}\right)\binom{n-s}{k-t-1}.
\end{align*}
\end{proof}

Consider the obvious construction:
\[
\hg= \left\{G\in \binom{[n]}{\ell}\colon [s]\subset G\right\}, \ \hf=\left\{F\in \binom{[n]}{k}\colon |F\cap [s]|\geq t\right\}.
\]
Then $(\hf,\hg)$ are cross $t$-intersecting and
\[
|\hg| =\binom{n-s}{\ell-s},\ |\hf| = \binom{s}{t}\binom{n-s}{k-t}+\sum_{t<j\leq s}\binom{s}{j}\binom{n-s}{k-j},
\]
showing that \eqref{ineq-4.0} does not hold for $|\hg|\leq \binom{n-s}{\ell-s}$.

\begin{cor}
Let $\hf\subset \binom{[n]}{k}$ be $t$-intersecting with $n\geq (t+2)(k-t)$ and  $|\hf|> (t+1)\binom{n-1}{k-t-1}$. If $\varrho(\hf)<\frac{t}{t+1}$, then for every $P\in \binom{[n]}{2}$,
\begin{align}\label{ineq-4.3}
|\hf(P)|\leq (t+1)\binom{n-t-2}{k-t-2}+\frac{5t^2+19t+24}{6}\binom{n-t-3}{k-t-3}.
\end{align}
\end{cor}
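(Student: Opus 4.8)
The plan is to obtain \eqref{ineq-4.3} by applying Proposition \ref{thm-4.3} with $s=t+2$ to the pair $\hf(\overline{P})$, $\hf(P)$, viewed as families on the $(n-2)$-element set $[n]\setminus P$. Writing $P=\{x,y\}$, the $k$-graph $\hf(\overline{P})=\{F\in\hf\colon F\cap P=\emptyset\}$ and the $(k-2)$-graph $\hf(P)=\{F\setminus P\colon P\subset F\in\hf\}$ are cross $t$-intersecting: for $P\subset F\in\hf$ and $F'\in\hf$ with $F'\cap P=\emptyset$ one has $(F\setminus P)\cap F'=F\cap F'$, of size at least $t$. Taking "$n$"$=n-2$, "$k$"$=k-2$, "$\ell$"$=k$, "$s$"$=t+2$, the bound \eqref{ineq-4.00} reads
\[
|\hf(P)|<\binom{t+1}{t}\binom{n-t-4}{k-t-2}+\Bigl(\tfrac{2}{t+1}\tbinom{t+2}{t-1}+\tbinom{t+1}{t-1}+2\tbinom{t+2}{t+1}\Bigr)\binom{n-t-4}{k-t-3};
\]
here $\binom{t+1}{t}=t+1$ and $\tfrac{2}{t+1}\binom{t+2}{t-1}+\binom{t+1}{t-1}+2\binom{t+2}{t+1}=\tfrac{t^{2}+2t}{3}+\tfrac{t^{2}+t}{2}+(2t+4)=\tfrac{5t^{2}+19t+24}{6}$, and since $\binom{n-t-4}{k-t-2}\le\binom{n-t-2}{k-t-2}$ and $\binom{n-t-4}{k-t-3}\le\binom{n-t-3}{k-t-3}$, this is exactly the asserted estimate.

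Before invoking Proposition \ref{thm-4.3} I would check its side conditions at these parameters: $s>t\ge2$ and $\ell=k>s$ are immediate, while $n-2\ge s(k-2-t)=(t+2)(k-t-2)$ follows from $n\ge(t+2)(k-t)=(t+2)(k-t-2)+2(t+2)$; the condition "$k$"$>s$, i.e.\ $k>t+4$, is also needed, and the few cases $t<k\le t+4$ would be dealt with separately. The one substantive hypothesis is $|\hf(\overline{P})|>\binom{n-2}{k-(t+2)}=\binom{n-2}{k-t-2}$. A preliminary reduction helps here: applying all shifts $S_{ij}$ with $i,j\notin P$ changes neither $|\hf|$, nor the $t$-intersecting property, nor the numbers $|\hf(P)|$, $|\hf(\overline{P})|$, so I may assume $\hf$, and hence $\hf(P)$ and $\hf(\overline{P})$, are initial on $[n]\setminus P$. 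Since $\hf(P)$, $\hf(\overline{P})$ are then initial and cross $t$-intersecting, Proposition \ref{fk2021} leaves three cases. If $\hf(P)$ is pseudo $(t+1)$-intersecting, then \eqref{ineq-walk} gives $|\hf(P)|\le\binom{n-2}{k-t-3}$ and we are done; if $\hf(P)$ is pseudo $t$-intersecting, then \eqref{ineq-walk} gives $|\hf(P)|\le\binom{n-2}{k-t-2}\le(t+1)\binom{n-t-2}{k-t-2}$ (the last step holding for $n\ge(t+2)(k-t)$ by \eqref{ineq-key1}) and again we are done; so the only remaining case is that $\hf(\overline{P})$ is pseudo $(t+1)$-intersecting while $\hf(P)$ is not pseudo $t$-intersecting.

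It is exactly in this last case that the hypotheses $\varrho(\hf)<\frac{t}{t+1}$ and $|\hf|>(t+1)\binom{n-1}{k-t-1}$ must be brought in, to prove $|\hf(\overline{P})|>\binom{n-2}{k-t-2}$ (after which Proposition \ref{thm-4.3} finishes as above). From $\varrho(\hf)<\frac{t}{t+1}$ one gets $|\hf(\bar{x})|,\,|\hf(\bar{y})|>\tfrac1{t+1}|\hf|>\binom{n-1}{k-t-1}$, so each of $x,y$ is missed by many edges; the task is then to combine this with $|\hf|>(t+1)\binom{n-1}{k-t-1}$ and with the cross $(t-i)$-intersecting structure of the slices $\hf(R,P)$, $R\subseteq P$ — the same slice-by-slice device used to prove Proposition \ref{thm-4.3} itself — in order to show that $|\hf_P|=|\hf|-|\hf(\overline{P})|$ cannot be as large as $|\hf|-\binom{n-2}{k-t-2}$. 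I expect this to be the main obstacle: it is the $t\ge2$ analogue of the bounds "$|\hf(P)|\le\dots$" established, via Hilton's Lemma and the Kruskal--Katona type lemmas, in the proofs of Lemma \ref{lem-main3} and Theorem \ref{thm-main2}, and it is precisely where $|\hf|$ being large forces $\hf$ to be spread out rather than concentrated on the pair $P$.
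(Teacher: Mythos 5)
Your identification of the key tool (Proposition \ref{thm-4.3} with $s=t+2$, applied on the ground set $[n]\setminus P$ to the cross $t$-intersecting pair $\hf(P)$, $\hf(\overline{P})$) and your binomial arithmetic turning \eqref{ineq-4.00} into the stated coefficient $\frac{5t^2+19t+24}{6}$ are both correct, but the proof is not complete: everything hinges on the claim $|\hf(\overline{P})|>\binom{n-2}{k-t-2}$, which you never establish and explicitly defer as ``the main obstacle.'' This claim does not follow from the hypotheses by any crude degree count: from $\varrho(\hf)<\frac{t}{t+1}$ you only get $|\hf(\bar{x})|+|\hf(\bar{y})|-|\hf|>\frac{1-t}{t+1}|\hf|$, which is negative for $t\geq 2$, so there is no direct route from the degree condition to a lower bound on $|\hf(\bar{x},\bar{y})|$. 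As it stands, the proposal reduces the corollary to an unproved (and not obviously true) intermediate statement, which is a genuine gap.

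The paper avoids this issue entirely by running the argument as a contradiction and never claiming $\hf(\overline{P})$ is large. Assume some $P=\{x,y\}$ violates \eqref{ineq-4.3}; then $|\hf(x,y)|$ exceeds the right-hand side of \eqref{ineq-4.00} (with $n-2$, $k-2$, $s=t+2$), so by Proposition \ref{thm-4.3} in contrapositive form $|\hf(\bar{x},\bar{y})|\leq\binom{n-2}{k-t-2}$. Now $\varrho(\hf)<\frac{t}{t+1}$ together with $|\hf|>(t+1)\binom{n-1}{k-t-1}$ gives $|\hf(\bar{x})|,|\hf(\bar{y})|>\binom{n-1}{k-t-1}$, whence both middle slices satisfy $|\hf(\bar{x},y)|,|\hf(x,\bar{y})|>\binom{n-1}{k-t-1}-\binom{n-2}{k-t-2}=\binom{n-2}{k-t-1}$. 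But $\hf(\bar{x},y)$ and $\hf(x,\bar{y})$ are cross $t$-intersecting $(k-1)$-uniform families on $n-2$ points, so Corollary \ref{thm-tintersecting} forces one of them to have size at most $\binom{n-2}{k-t-1}$ or $\binom{n-2}{k-t-2}$, a contradiction. This use of the two mixed slices $\hf(\bar{x},y)$, $\hf(x,\bar{y})$ against Corollary \ref{thm-tintersecting} is exactly the device your sketch is missing; if you want to salvage your write-up, replace your final paragraph with this contradiction argument (your shifting reduction and case analysis via Proposition \ref{fk2021} then become unnecessary).
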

\begin{proof}
If there exists $\{x,y\}\subset [n]$ such that
\begin{align*}
|\hf(x,y)| &>(t+1)\binom{n-t-2}{k-t-2}+\frac{5t^2+19t+24}{6}\binom{n-t-3}{k-t-3}\\[5pt]
&>(t+1)\binom{n-t-4}{k-t-2}+\left(\frac{2}{t+1}\binom{t+2}{t-1}+\binom{t+1}{t-1}+2\binom{t+2}{t+1}\right)
\binom{n-t-4}{k-t-3},
\end{align*}
note that $\hf(x,y)\subset \binom{[n]\setminus \{x,y\}}{k-2}$, $\hf(\bar{x},\bar{y})\subset \binom{[n]\setminus \{x,y\}}{k}$ are cross $t$-intersecting, by applying Proposition \ref{thm-4.3} with $s=t+2$ we infer
\[
|\hf(\bar{x},\bar{y})|\leq \binom{n-2}{k-t-2}.
\]
Since $\varrho(\hf)<\frac{t}{t+1}$ implies
\[
|\hf(\bar{x})|, |\hf(\bar{y})|> \frac{1}{t+1}|\hf| > \binom{n-1}{k-t-1},
\]
it follows that
\[
\hf(\bar{x},y)\geq |\hf(\bar{x})| -|\hf(\bar{x},\bar{y})| > \binom{n-2}{k-t-1}
\]
and
\[
\hf(x,\bar{y})\geq |\hf(\bar{y})| -|\hf(\bar{x},\bar{y})| > \binom{n-2}{k-t-1}.
\]
But $\hf(\bar{x},y)$, $\hf(x,\bar{y})$ are cross $t$-intersecting. This contradicts Corollary  \ref{thm-tintersecting}.
\end{proof}

\begin{lem}\label{lem-4.6}
Let $\hf\subset \binom{[n]}{k}$ be initial, $t$-intersecting, $n\geq 2(t+1)(k-t)$ and $|\hf|\geq 2t(t+1)(t+2)\binom{n-t-4}{k-t-2}$ then
\[
\varrho(\hf)>\frac{t}{t+1}.
\]
\end{lem}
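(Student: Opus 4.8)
The plan is to prove the contrapositive: assuming $\hf$ initial, $t$-intersecting, $n\ge 2(t+1)(k-t)$ and $\varrho(\hf)\le\frac t{t+1}$, derive $|\hf|<2t(t+1)(t+2)\binom{n-t-4}{k-t-2}$. Since $\hf$ is initial, $|\hf(1)|$ is the maximum degree, so $\varrho(\hf)\le\frac t{t+1}$ is the same as $|\hf(\overline 1)|\ge\frac1{t+1}|\hf|$ (and if $\hf(\overline 1)=\emptyset$ then $\varrho(\hf)=1$ and we are done). The two standing facts are that $\hf(\overline 1)$ is $(t+1)$-intersecting (Proposition~\ref{prop-1.2} with $s=1$) and that $\partial\hf(\overline 1)\subseteq\hf(1)$ by~\eqref{ineq-shifted}; combined with the Katona Intersecting Shadow Theorem these give $|\hf(1)|\ge\frac k{k-t}|\hf(\overline 1)|$, hence $\varrho(\hf)\ge\frac k{2k-t}$, which already exceeds $\frac t{t+1}$ for $k\le t+1$ — the only range in which the size hypothesis is vacuous ($\binom{n-t-4}{k-t-2}=0$). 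For $k\ge t+2$ I would split on $\tau:=\tau_t(\hf)$, noting first that a minimum $t$-transversal of an initial family may be taken to be the initial segment $[\tau]$ (shift a minimum transversal coordinatewise downwards, checking via \eqref{ineq-shifted}-type reasoning that the $t$-transversal property survives), and that $\tau\ge t+1$ by Proposition~\ref{prop-1.3}.

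If $\tau=t$ then $[t]\subseteq F$ for every $F\in\hf$, so $\varrho(\hf)=1$. If $\tau=t+1$, set $\hf_i'=\{F\setminus[t+1]\colon F\in\hf,\ F\cap[t+1]=[t+1]\setminus\{i\}\}$ for $i\in[t+1]$ and $\hf_0=\hf([t+1])$. As $[t+1]$ is a $t$-transversal, any $F\in\hf$ avoiding $1$ meets $[t+1]$ in exactly $[t+1]\setminus\{1\}$, so $\hf(\overline 1)=\hf_1'$; initiality yields the nesting $\hf_1'\subseteq\hf_2'\subseteq\dots\subseteq\hf_{t+1}'$, since for $i<j\le t+1$ the lift $([t+1]\setminus\{j\})\cup F$ of $F\in\hf_i'$ is coordinatewise below the lift $([t+1]\setminus\{i\})\cup F\in\hf$ and hence lies in $\hf$; and $\hf_0\ne\emptyset$ because $[k]\in\hf\supseteq[t+1]$. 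Therefore $|\hf|=|\hf_0|+\sum_{i=1}^{t+1}|\hf_i'|\ge 1+(t+1)|\hf(\overline 1)|$, i.e. $\varrho(\hf)=1-|\hf(\overline 1)|/|\hf|>\frac t{t+1}$, contradicting our assumption (and no hypothesis on $|\hf|$ is used here).

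So we may assume $\tau\ge t+2$, and this is where $|\hf|$ enters. First, $\hf(\overline 1)$ cannot be the full $(t+1)$-star: if every $G\in\hf(\overline 1)$ contained a fixed $(t+1)$-set $C$ (WLOG $C=\{2,\dots,t+2\}$) and $\hf(\overline 1)$ contained all such $G$, then for $F\in\hf$ with $1\in F$ one could choose $G\supseteq C$ with $G\setminus C$ disjoint from $F$, giving $|F\cap C|=|F\cap G|\ge t$, so $C$ would be a $t$-transversal of $\hf$ — impossible for $\tau\ge t+2$. Thus $\hf(\overline 1)$ is a $(t+1)$-intersecting family on $n-1$ points that stays bounded away from the $(t+1)$-star, and the Hilton--Milner-type theorem for $t$-intersecting families (see \cite{F78}), applicable since $n-1\ge 2(t+1)(k-t)-1$ comfortably exceeds the required threshold, gives $|\hf(\overline 1)|\le(t+3)\binom{n-t-4}{k-t-2}+\binom{n-t-4}{k-t-3}<(t+4)\binom{n-t-4}{k-t-2}$. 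Hence $|\hf|\le(t+1)|\hf(\overline 1)|<(t+1)(t+4)\binom{n-t-4}{k-t-2}<2t(t+1)(t+2)\binom{n-t-4}{k-t-2}$, using $t+4<2t(t+2)$ for $t\ge 2$; this is the required contradiction.

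The hard part is the case $\tau\ge t+2$, where two things need care. One must turn ``$\hf(\overline 1)$ is not essentially a $(t+1)$-star'' into a genuine size bound: concretely, that an initial $(t+1)$-intersecting family of size $\ge\frac1{t+1}|\hf|\ge 2t(t+2)\binom{n-t-4}{k-t-2}$ with $\tau_t(\hf)\ge t+2$ must miss enough sets through every candidate center $C$ for the disjointness selection above to succeed — this is where initiality and the precise size bound interlock and some additional argument (via Proposition~\ref{fk2021} and the walk bound~\eqref{ineq-walk}) is likely needed. And one must pin down the Hilton--Milner-type inequality for $(t+1)$-intersecting families in the window $n\ge 2(t+1)(k-t)$, possibly reproving it from Proposition~\ref{fk2021}, \eqref{ineq-walk} and the Katona shadow inequality rather than citing a black box; the constant $2t(t+1)(t+2)$ is then calibrated so as to absorb the lower-order terms and the slack in that inequality.
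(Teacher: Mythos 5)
Your reductions are fine as far as they go: for initial families a minimum $t$-transversal can indeed be pushed down to $[\tau]$, the cases $\tau_t(\hf)\le t+1$ and $k\le t+1$ work as you describe, and the nesting $\hf_1'\subseteq\cdots\subseteq\hf_{t+1}'$ is correct. But the crux, $\tau_t(\hf)\ge t+2$, has a real gap (which you partly flag yourself). Your disjointness argument only shows that $\hf(\bar 1)$ is not the \emph{full} $(t+1)$-star; every Hilton--Milner-type theorem requires non-triviality, i.e.\ that $\hf(\bar 1)$ is not \emph{contained in} a $(t+1)$-star. If $\hf(\bar 1)$ is a proper subfamily of the star of $C=[2,t+2]$, you cannot select $G\supseteq C$ with $G\setminus C$ disjoint from $F$, no small transversal is produced, and a priori $|\hf(\bar 1)|$ can be as large as $\binom{n-t-2}{k-t-1}$, far above the $2t(t+2)\binom{n-t-4}{k-t-2}$ you need to beat. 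Worse, the quantitative bound you import is not available in this paper (the cited \cite{F78} is the EKR-threshold paper, not a stability result) and is in fact false in the stated generality: the Hilton--Milner-type family $\{G\in\binom{[2,n]}{k}\colon [2,t+2]\subset G,\ G\cap[t+3,k+3]\ne\emptyset\}$ (completed in the usual way) is non-trivial $(t+1)$-intersecting of size about $\binom{n-t-2}{k-t-1}-\binom{n-k-3}{k-t-1}$, which is already of the same order as $(t+3)\binom{n-t-4}{k-t-2}$ at $n\approx 2(t+1)(k-t)$ and exceeds it by a factor of roughly $(k-t)/(t+4)$ for larger $n$. So even a correct non-trivial-intersecting bound would not close this case when $k\gg t$; one must exploit the relation between $\hf(1)$ and $\hf(\bar 1)$ (initiality plus shadows), not just the size of $\hf(\bar 1)$ alone.

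For comparison, the paper avoids the transversal dichotomy entirely. It partitions $\hf$ according to the trace on $[t+1]$ (whether or not $[t+1]$ is a transversal), proves via initiality that $\hf(P,[t+1])$ is $\bigl(1+2(t-|P|)\bigr)$-intersecting for $|P|\le t-1$, so these pieces are bounded by EKR --- this is exactly where $n\ge 2(t+1)(k-t)$ enters and where the error term $2t\binom{n-t-4}{k-t-2}$ (hence the constant $2t(t+1)(t+2)$) comes from --- and then uses $\hf_1\subset\cdots\subset\hf_{t+1}$ together with $|\partial\hf_1|\ge|\hf_1|$ (Katona, since $\hf_1$ is intersecting) and $\partial\hf_1\subset\hf_0$ to get $|\hf_0|+|\hf_1|+\cdots+|\hf_{t+1}|\ge(t+2)|\hf_1|$, whence $|\hf(\bar 1)|<\tfrac1{t+2}|\hf|+2t\binom{n-t-4}{k-t-2}\le\tfrac1{t+1}|\hf|$. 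If you want to salvage your outline, the missing piece is essentially to reprove something of this strength in the ``$\hf(\bar 1)$ close to a star'' regime, which is the heart of the matter and is not supplied by your sketch.
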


\begin{proof}
Consider a subset $P\subset [t+1]$, $|P|\leq t-1$.

\begin{claim}\label{claim-10}
$\hf(P,[t+1])$ is $1+2(t-|P|)$-intersecting.
\end{claim}
\begin{proof}
Suppose for contradiction that $\bar{F}_1, \bar{F}_2\in \hf(P,[t+1])$ satisfy $\bar{F}_1\cap \bar{F}_2=D$ with  $|D|\leq 2(t-|P|)$. Since $\hf$ is $t$-intersecting, we infer $|D|\geq t-|P|$. If $|D|=t-|P|$ then choose $y\in D$ and $x\in [t+1]\setminus P$ and set $F_1=\bar{F}_1\cup P$, $F_2=(\bar{F}_2\cup P\cup \{x\})\setminus \{y\}$. By initiality $F_2\prec \bar{F}_2\cup P$ implies $F_2\in \hf$. But $|F_1\cap F_2|=|P|+|D|-1=t-1$, a contradiction.

 If $|D|\geq t+1-|P|$, then choose $E\subset D$, $|E|=t+1-|P|$ and set $F_1=\bar{F}_1\cup P$, $F_2=(\bar{F}_2\cup [t+1])\setminus E$. Then $F_1\cap F_2=P\cup D\setminus E$ whence
\[
|F_1\cap F_2|=|P|+|D|-|E| \leq |P|+2t-2|P|-(t+1-|P|)=t-1.
\]
Now $F_1\in \hf$ and $\bar{F}_2\cup P\in \hf$ by definition and $F_2\prec \bar{F}_2\cup P$. Hence $F_2\in \hf$ contradicting the $t$-intersecting property.
\end{proof}

Define $\hf_i=\hf([t+1]\setminus \{i\},[t+1])$. By initiality
\[
\hf_1\subset\hf_2\subset \ldots\subset \hf_{t+1}.
\]
Since $\hf(\bar{1})$ is $(t+1)$-intersecting, $\hf_1$  is intersecting. Thus by \eqref{ineq-ishadow} $|\partial \hf_1|\geq |\hf_1|$. By initiality $\partial \hf_1\subset \hf_0:=\hf([t+1])$. We infer that
\begin{align}\label{ineq-4.5}
|\hf_0|+|\hf_1|+|\hf_2|+\ldots+|\hf_{t+1}|\geq (t+2)|\hf_1|.
\end{align}
For any $P\in \binom{[2,t+1]}{t-j}$, by Claim \ref{claim-10} we know $\hf(P,[t+1])$ is $(2j+1)$-intersecting. Note that $n\geq 2(t+1)(k-t)> (2j+2)(k-t-j)$ for $j=1,\ldots,t$. By \eqref{ineq-ekr}, we infer
\[
|\hf(P,[t+1])| \leq \binom{n-t-1-2j-1}{k-(t-j)-2j-1}= \binom{n-t-2-2j}{k-t-1-j}.
\]
Note that for $j\geq  k-t$ we simply have $|\hf(P,[t+1])|=0$.
Thus,
\begin{align*}
|\hf(\bar{1})|=&\sum_{P\subset [2,t+1]} |\hf(P,[t+1])| \\[5pt]
=&|\hf_1| +\sum_{0\leq i\leq t-1}\sum_{P\in \binom{[2,t+1]}{i}} |\hf(P,[t+1])|\\[5pt]
\leq& |\hf_1| +\sum_{1\leq j\leq \min\{t,k-t-1\}} \binom{t}{t-j}\binom{n-t-2-2j}{k-t-1-j}.
\end{align*}
For $k=t+2$, we have
\[
|\hf(\bar{1})|=|\hf_1|+t\binom{n-t-4}{k-t-2}.
\]
For $2\leq j\leq \min\{t,k-t-1\}$
\begin{align*}
\frac{\binom{t}{j}\binom{n-t-2-2j}{k-t-1-j}}{\binom{t}{j-1}\binom{n-t-2j}{k-t-j}} =\frac{(t-j+1)(k-t-j)(n-k-j)}{j(n-t-2j)(n-t-2j-1)}\leq \frac{(t-1)(k-t-2)(n-k-2)}{2(n-3t)(n-3t-1)}.
\end{align*}
Since $n\geq 2(t+1)(k-t)$ and $k\geq t+3$ implies that
\[
\frac{n-k-2}{n-3t} < 2,\ \frac{(t-1)(k-t-2)}{n-3t-1}< \frac{1}{2},
\]
it follows that $\binom{t}{j}\binom{n-t-2-2j}{k-t-1-j}<\frac{1}{2}\binom{t}{j-1}\binom{n-t-2j}{k-t-j}$.
Thus,
\begin{align}\label{ineq-4.6}
|\hf(\bar{1})|& \leq |\hf_1| +\sum_{1\leq j\leq t} \binom{t}{t-j}\binom{n-t-2-2j}{k-t-1-j}\nonumber\\[5pt]
& < |\hf_1| +t\binom{n-t-4}{k-t-2} \sum_{i=0}^{\infty} 2^{-i}\nonumber\\[5pt]
&= |\hf_1| +2t\binom{n-t-4}{k-t-2}.
\end{align}
By \eqref{ineq-4.5} and \eqref{ineq-4.6}, we obtain that
\begin{align*}
|\hf(\bar{1})| &\leq
\frac{1}{t+2} \sum_{0\leq i\leq t+1}|\hf_i| +2t\binom{n-t-4}{k-t-2}\\[5pt]
&<
\frac{1}{t+2} |\hf|+2t\binom{n-t-4}{k-t-2}\\[5pt]
&\leq \frac{1}{t+2} |\hf|+ \frac{1}{(t+1)(t+2)} |\hf|\\[5pt]
&= \frac{1}{t+1}|\hf|,
\end{align*}
and the lemma follows.
\end{proof}

\begin{proof}[Proof of Theorem \ref{thm-main5}]
Suppose to the contrary that $|\hf|> (t+1)\binom{n-1}{k-t-1}$ and $\varrho(\hf)\leq  \frac{t}{t+1}$. Since $n\geq 2t(t+2)k$, we infer
 \begin{align}\label{ineq-4.10}
 |\hf|\geq (t+1)\binom{n-1}{k-t-1} &> (t+1)2t(t+2)\binom{n-2}{k-t-2}> 4(t+1)(t+2)\binom{n-t-2}{k-t-2}.
 \end{align}
 Shift $\hf$ ad extremis for $\varrho(\hf)\leq  \frac{t}{t+1}$ and let $\mathds{H}$ be the graph formed by the shift-resistant pairs. For every $P\in \binom{[n]}{2}$, by \eqref{ineq-4.3} and $n\geq 2t(t+2)k>\frac{5t^2+19t+24}{6}k$ we infer
\begin{align}\label{ineq-new4.3}
|\hf(P)| < (t+1)\binom{n-t-2}{k-t-2}+\frac{5t^2+19t+24}{6}\binom{n-t-3}{k-t-3}< (t+2)\binom{n-t-2}{k-t-2}.
\end{align}

\begin{claim}
$\mathds{H}$ has matching number at most 1.
\end{claim}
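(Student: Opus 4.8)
The plan is to follow the scheme of Claims \ref{claim-8} and \ref{claim-6}, adapted to the threshold $\frac{t}{t+1}$. First I would record what a shift-resistant pair buys. Since $\hf$ has been shifted ad extremis with respect to $\varrho(\hf)\le\frac{t}{t+1}$ (Definition \ref{defn-2.1}), a pair $\{i,j\}$ with $i<j$ is shift-resistant iff $S_{ij}(\hf)\ne\hf$, and then $\varrho(S_{ij}(\hf))>\frac{t}{t+1}$. Now $S_{ij}$ leaves $|\hf(\ell)|$ unchanged for every $\ell\notin\{i,j\}$, it cannot increase $|\hf(j)|$, and every set that newly contains $i$ previously contained $j$ but not $i$. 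As $\varrho(\hf)\le\frac{t}{t+1}$ means every old degree is at most $\frac{t}{t+1}|\hf|$, the vertex witnessing $\varrho(S_{ij}(\hf))>\frac{t}{t+1}$ must be $i$, and $|S_{ij}(\hf)(i)|\le|\hf(i)|+|\hf(\bar i,j)|=|\hf_{\{i,j\}}|$. Hence every shift-resistant pair $P$ satisfies $|\hf_P|>\frac{t}{t+1}|\hf|$.

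Next I would argue by contradiction: suppose $\mathds{H}$ contains two disjoint edges $P_1,P_2$. Then $|\hf_{P_1}|,|\hf_{P_2}|>\frac{t}{t+1}|\hf|$, and since both are subfamilies of $\hf$, inclusion--exclusion inside $\hf$ gives $|\hf_{P_1}\cap\hf_{P_2}|\ge|\hf_{P_1}|+|\hf_{P_2}|-|\hf|>\frac{t-1}{t+1}|\hf|$. For the matching upper bound, any $F\in\hf_{P_1}\cap\hf_{P_2}$ meets both $P_1$ and $P_2$, and since $P_1\cap P_2=\emptyset$ it contains a pair $\{x,y\}$ with $x\in P_1$, $y\in P_2$; summing over the four choices, $|\hf_{P_1}\cap\hf_{P_2}|\le 4\max_{P\in\binom{[n]}{2}}|\hf(P)|<4(t+2)\binom{n-t-2}{k-t-2}$ by \eqref{ineq-new4.3} (one could instead invoke Lemma \ref{lem-3.5} for a sharper bound, but this crude estimate suffices). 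Finally, \eqref{ineq-4.10} gives $|\hf|>4(t+1)(t+2)\binom{n-t-2}{k-t-2}$, so the lower bound forces $|\hf_{P_1}\cap\hf_{P_2}|>\frac{t-1}{t+1}\cdot 4(t+1)(t+2)\binom{n-t-2}{k-t-2}=4(t-1)(t+2)\binom{n-t-2}{k-t-2}\ge 4(t+2)\binom{n-t-2}{k-t-2}$, where the last step uses $t\ge2$. This contradicts the upper bound.

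The calculation is routine; the one point that needs care is that this final chain is exactly tight at $t=2$ (where $t-1=1$), so there is no slack between $\frac{t-1}{t+1}|\hf|$ and $4(t+2)\binom{n-t-2}{k-t-2}$, and one must preserve the strictness of $|\hf|>4(t+1)(t+2)\binom{n-t-2}{k-t-2}$ and of \eqref{ineq-new4.3} throughout. That is the only real obstacle; the rest is the same double-counting already carried out twice in Section 3.
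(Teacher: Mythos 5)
Your proposal is correct and follows essentially the paper's own argument: shift-resistance gives $|\hf_{P_i}|>\frac{t}{t+1}|\hf|$, the four cross pairs together with \eqref{ineq-new4.3} bound $|\hf_{P_1}\cap\hf_{P_2}|$ by $4(t+2)\binom{n-t-2}{k-t-2}$, and inclusion--exclusion plus \eqref{ineq-4.10} yields the contradiction (the paper phrases it as $|\hf|>|\hf|$ rather than contradicting the intersection bound, which is algebraically the same rearrangement). Your added justification that the increased degree must occur at the smaller index $i$, and your remark on tightness at $t=2$, are accurate but do not change the route.
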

\begin{proof}
Let $(a_1,b_1),(a_2,b_2)\in \mathds{H}$. Set $\hg_i = \hf_{\{a_i,b_i\}}$, $i=1,2$. Since $\varrho(S_{a_ib_i}(\hf))>\frac{t}{t+1}|\hf|$, we infer $|\hg_i|\geq \frac{t}{t+1}|\hf|$. By \eqref{ineq-new4.3} we have
\[
|\hg_1\cap\hg_2|  <  4(t+2)\binom{n-t-2}{k-t-2}.
\]
It follows that
\begin{align*}
|\hf| &\geq |\hg_1|+|\hg_2|-|\hg_1\cap \hg_2|\\[5pt]
&> \frac{2t}{t+1}|\hf| - 4(t+2)\binom{n-t-2}{k-t-2}\\[5pt]
&\overset{\eqref{ineq-4.10}}>  \frac{2t}{t+1}|\hf| - \frac{1}{t+1}|\hf| \geq |\hf|,
\end{align*}
contradiction.
\end{proof}

Note that $n\geq 2t(t+2)k$ implies
\begin{align*}
|\hf|\geq (t+1)\binom{n-1}{k-t-1}>2t(t+1)(t+2)\binom{n-t-4}{k-t-2}.
\end{align*}
By Lemma \ref{lem-4.6}, we may assume that $\mathds{H}$ has matching number one. For convenience assume that $(n-1,n)\in \mathds{H}$. Let
\[
\ha=\left\{A\in \binom{[n-2]}{k-1}\colon A\cup \{x\}\in \hf, \ x=n-1 \mbox{ or } x=n\right\},\ \hb=\binom{[n-2]}{k}\cap \hf.
\]
Since
\[
|\hf(n-1,n)|+|\hf(\overline{n-1},n)\cup\hf(n-1,\overline{n})|\geq \frac{t}{t+1}|\hf|,
\]
by \eqref{ineq-4.3} and $t\geq 2$ we infer
\begin{align}\label{ineq-4.4}
\ha(\bar{1},\bar{2})&\geq \frac{t}{t+1}|\hf| -|\hf(n-1,n)|-\sum_{i\in \{1,2\},j\in \{n-1,n\} }|\hf(i,j)|\nonumber\\[5pt]
&\geq \frac{t}{t+1}|\hf| -5(t+2)\binom{n-t-2}{k-t-2}\nonumber\\[5pt]
&\overset{\eqref{ineq-4.10}}{\geq} 4t(t+2)\binom{n-t-2}{k-t-2} -5(t+2)\binom{n-t-2}{k-t-2}\nonumber\\[5pt]
&\overset{\eqref{ineq-key11}}{\geq}3(t+2)\cdot \frac{1}{2}\binom{n-3}{k-t-2}\nonumber\\[5pt]
&>\binom{n-4}{k-t-2}.
\end{align}

 Fix $S\subset [2]$ with $|S|\leq 1$. Since $\ha,\hb$ are initial and cross $t$-intersecting, we infer that $\ha(\bar{1},\bar{2})$ and $\hb(S,[2])$ are cross $(t+2-|S|)$-intersecting. By \eqref{ineq-4.4} we know that $\ha(\bar{1},\bar{2})$ is not pseudo $(t+2-|S|)$-intersecting. By Proposition \ref{fk2021} we infer that $\hb(S,[2])$ is pseudo $(t+3-|S|)$-intersecting. Therefore,
 \[
 |\hb(S,[2])|\leq \binom{n-4}{k-|S|-(t+3-|S|)} = \binom{n-4}{k-t-3}.
 \]
Note that \eqref{ineq-new4.3} implies $\hb([2])< (t+2)\binom{n-t-2}{k-t-2}$. Thus,
\begin{align*}
|\hb|&= \sum_{S\subset [2]}  |\hb(S,[2])|\\[5pt]
& < 3\binom{n-4}{k-t-3}+(t+2)\binom{n-t-2}{k-t-2}\\[5pt]
& < \frac{3(k-t-2)}{n-3}\binom{n-3}{k-t-2}+(t+2)\binom{n-t-2}{k-t-2}\\[5pt]
 &\overset {\eqref{ineq-key11}}{\leq}\frac{6(k-t-2)}{n-3} \binom{n-t-2}{k-t-2}+(t+2)\binom{n-t-2}{k-t-2}\\[5pt]
&< (t+3)\binom{n-t-2}{k-t-2}.
\end{align*}
Then $\varrho(\hf)\leq \frac{t}{t+1}$ implies
\[
|\hf(\overline{n-1},n)|\geq |\hf(\overline{n-1})|-|\hb|>\frac{1}{t+1}|\hf|-(t+3)\binom{n-t-2}{k-t-2}\overset{\eqref{ineq-4.10}}{>} (3t+5)\binom{n-t-2}{k-t-2}
\]
and
\[
|\hf(n-1,\overline{n})|\geq |\hf(\bar{n})|-|\hb|>\frac{1}{t+1}|\hf|-(t+3)\binom{n-t-2}{k-t-2}\overset{\eqref{ineq-4.10}}{>} (3t+5)\binom{n-t-2}{k-t-2}.
\]
Now by \eqref{ineq-new4.3}
\begin{align*}
|\hf(\overline{\{1,n-1\}},n)|\geq |\hf(\overline{n-1},n)|-|\hf(1,n)|> (2t+3)\binom{n-t-2}{k-t-2}
\end{align*}
and
\begin{align*}
|\hf(\overline{\{1,n\}},n-1)|\geq |\hf(n-1,\overline{n})|-|\hf(1,n-1)|> (2t+3)\binom{n-t-2}{k-t-2}.
\end{align*}
By \eqref{ineq-key11},
\[
(2t+3)\binom{n-t-2}{k-t-2} \geq \frac{(2t+3)}{2} \binom{n-3}{k-t-2} > \binom{n-3}{k-t-2},
\]
this contradicts the fact that $\hf(\overline{\{1,n-1\}},n), \hf(\overline{\{1,n\}},n-1)\subset \binom{[2,n-2]}{k-1}$ are cross $(t+1)$-intersecting. Thus the theorem holds.
\end{proof}

\section{Maximum degree results for $r$-wise intersecting families}

A family $\hf\subset\binom{[n]}{k}$ is called {\it $r$-wise $t$-intersecting} if $|F_1\cap \ldots \cap F_r|\geq t$ for all $F_1,\ldots,F_r\in \hf$.

The investigation of $r$-wise $t$-intersecting families has a long history  (\cite{BD}, \cite{F76-2}, \cite{F79}, \cite{F91-2}, \cite{F19}, \cite{FT11}). Many of those results concern general (non-uniform) families.
One of the early gems of extremal set theory is the following.

\begin{thm}[Brace-Daykin Theorem \cite{BD}]
Suppose that $\hf\subset 2^{[n]}$ is non-trivial $r$-wise intersecting, $r\geq 3$. Then
\begin{align}\label{ineq-5.1}
|\hf| \leq \frac{r+2}{2^{r+1}}2^n
\end{align}
with equality iff $\hf$ is isomorphic to
\[
\hb(n,r) =\{B\subset [n]\colon |B\cap [r+1]|\geq r\}.
\]
\end{thm}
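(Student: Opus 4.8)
The plan is to reduce to a shifted, up-closed, non-trivial family and then pin down its structure. First, replacing $\hf$ by its up-set closure $\{G\colon G\supseteq F\text{ for some }F\in\hf\}$ never decreases $|\hf|$, keeps $\hf$ $r$-wise intersecting, and keeps it non-trivial (the common intersection $\cap\hf$ is unchanged), and equality can only occur when $\hf$ already equals this closure; so I may assume $\hf$ is an up-set. The operations $S_{ij}$ preserve $r$-wise intersection and the up-set property but can destroy non-triviality, so — in the spirit of the shifting ad extremis method of Section 2 — I would shift $\hf$ ad extremis with respect to the property ``$\hf$ is non-trivial''. This yields a non-trivial $r$-wise intersecting up-set $\hf$ in which every shift-resistant pair $(i,j)$ has $\cap S_{ij}(\hf)=\{i\}$; a short manipulation in the style of Proposition \ref{prop-1.2} shows that such a pair forces every $F\in\hf$ with $i\notin F$ to contain $j$, and the number (indeed the matching number) of shift-resistant pairs can then be controlled exactly as in the proofs of Lemmas \ref{lem-main3} and \ref{lem-4.6}. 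The cleanest case is when there are no shift-resistant pairs, i.e. $\hf$ is fully shifted; I would treat that first and then close the general case by feeding this structural information back into the estimates below.

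So assume $\hf$ is a fully shifted, non-trivial, $r$-wise intersecting up-set. Non-triviality together with the up-set property gives $[n]\setminus\{i\}\in\hf$ for every $i$; intersecting an arbitrary $F\in\hf$ with $r-1$ of these $(n-1)$-sets shows $|F|\geq r$ for all $F\in\hf$, and intersecting two $r$-element members $B,B'$ with $r-2$ of them shows $|B\cap B'|\geq r-1$. Hence the $r$-element members of $\hf$ form an $(r-1)$-intersecting $r$-graph, and the classical dichotomy for such families applies: either they all contain a common $(r-1)$-set, which by shiftedness we may take to be $[r-1]$ (Case A), or they are exactly the $r$-subsets of a fixed $(r+1)$-set, which by shiftedness must be $[r+1]$ (Case B). The sub-case that $\hf$ has no $r$-element member at all (all minimal sets of size $\geq r+1$) I would dispose of by a separate, easier count showing $|\hf|$ is then well below $\frac{r+2}{2^{r+1}}2^n$.

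In Case B, being an up-set that contains every $r$-subset of $[r+1]$ forces $\hf\supseteq\hb(n,r)=\{B\colon|B\cap[r+1]|\geq r\}$; conversely, if some $F\in\hf$ satisfied $|F\cap[r+1]|\leq r-1$, then $F$ would miss two elements $i,j\in[r+1]$, and $F$ together with the $r-1$ sets $[r+1]\setminus\{k\}$, $k\in[r+1]\setminus\{i,j\}$ (all of which lie in $\hf$), would have intersection $F\cap\{i,j\}=\emptyset$, contradicting $r$-wise intersection; hence $\hf=\hb(n,r)$, of size $(r+2)2^{n-r-1}=\frac{r+2}{2^{r+1}}2^n$, which proves \eqref{ineq-5.1} here and identifies the extremal family. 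In Case A a parallel argument — pitting an arbitrary $F\in\hf$ against $r-1$ (respectively $r-2$) of the $r$-element members $[r-1]\cup\{x\}$ — forces every member of $\hf$ to meet $[r-1]$, and a further round of the same argument (comparing two members meeting $[r-1]$ in different single points against more of the $r$-element members) restricts $\hf$ so severely that $|\hf|$ falls comfortably below the bound, so the uniqueness part is not violated. I expect the main obstacle to be book-keeping rather than ideas: getting the interaction of shifting with non-triviality exactly right in the first step, and handling the degenerate configurations of the second step (Case A, a family with only a handful of $r$-element members, and small values of $r$ such as $r=3$) carefully enough that the inequality there comes out strictly — which is precisely what the equality characterization requires.
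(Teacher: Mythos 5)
The paper does not prove this statement at all: it is quoted as the classical Brace--Daykin theorem with a citation to \cite{BD}, so there is no internal proof to compare against and your sketch has to stand on its own. As it stands it has genuine gaps. The most structural one is the shifting step: you only carry out the argument under the assumption that $\hf$ is fully shifted, and you defer the case of shift-resistant pairs to an appeal to Lemmas \ref{lem-main3} and \ref{lem-4.6}, whose control of shift-resistant pairs rests on largeness/degree hypotheses specific to those cross-intersecting and $t$-intersecting settings and has no analogue here. The possible destruction of non-triviality by $S_{ij}$ is precisely the known crux of any shifting proof of Brace--Daykin; your observation that a resistant pair $(i,j)$ forces every $F\in\hf$ with $i\notin F$ to contain $j$ is correct but is only the starting point, and nothing in the proposal converts it into the bound \eqref{ineq-5.1}. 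Relatedly, even if the fully shifted family were identified as $\hb(n,r)$, the uniqueness assertion ``isomorphic to $\hb(n,r)$'' requires pulling equality back through the up-closure and through every shift, which is a separate (standard but not automatic) argument you do not address.

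There is also a concrete error in Case B. The dichotomy for an $(r-1)$-intersecting $r$-uniform family gives only that its members are contained in $\binom{S}{r}$ for some $(r+1)$-set $S$ (or share an $(r-1)$-set); it does not give that \emph{all} $r$-subsets of $S$ occur, and shiftedness does not rescue this: the family $\hb(n,r)\setminus\{[r+1]\setminus\{1\}\}$ is an up-closed, fully shifted, non-trivial, $r$-wise intersecting family whose $r$-element members are exactly the $r$ sets $[r+1]\setminus\{j\}$, $2\le j\le r+1$, which have no common $(r-1)$-set and yet are not all of $\binom{[r+1]}{r}$. For such intermediate configurations your derivation of $|F\cap[r+1]|\ge r$ breaks down, since for a pair $i,j$ with $1\notin\{i,j\}$ it invokes the absent set $[r+1]\setminus\{1\}$; these configurations are not covered by your Case A either, so they need their own treatment. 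Finally, Case A, and likewise the sub-case in which $\hf$ has no $r$-element member, end with the unquantified claims that $|\hf|$ falls ``comfortably below'' the bound; since the equality characterization requires strict inequality exactly in these cases, and near-extremal families sit close to the threshold, these estimates are where the real work of the theorem lies and cannot be waved away. In sum, the skeleton (up-closure, shifting, classification via the $r$-element members) is reasonable, but the proposal is an outline whose hard steps are missing or, in Case B, incorrectly justified.
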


For $r$-wise intersecting uniform families, $\hf\subset \binom{[n]}{k}$, $n\geq 2k$,
\begin{align}\label{ineq-5.2}
|\hf| \leq \binom{n-1}{k-1}
\end{align}
follows from the Erd\H{o}s-Ko-Rado Theorem. In \cite{F76-2} it is proved for the best possible range $n\geq \frac{r}{r-1}k$. In \cite{F87} it is shown that unlike the case $r=2$, even if $n=\frac{r}{r-1}k$, the full star is the unique family attaining equality in \eqref{ineq-5.2}.

Let us introduce the notation
\[
t_j(\hf)=\max\left\{t\colon \hf \mbox{ is  $j$-wise $t$-intersecting}\right\}.
\]
Then $\hf$ is $r$-wise $t$-intersecting iff $t_r(\hf)\geq t$.

Note that $t_j(\hf)=t_{j+1}(\hf)>0$ implies that $\hf$ is trivial. Indeed, pick $F_1,\ldots,F_j\in \hf$ satisfying $F_1\cap \ldots\cap F_j=:T$ satisfying $|T|=t_j(\hf)$. Then $T\subset F$ for all $F\in \hf$.

Similarly, if $t_j(\hf)=1+t_{j+1}(\hf)$ then $|F\cap T|\geq |T|-1$ holds for the above $T$ and every $F\in \hf$. These considerations lead to
\begin{lem}
Suppose that $\hf$ is non-trivial $r$-wise $t$-intersecting, $r\geq 3$. Then
\begin{align}\label{ineq-5.3}
t_2(\hf)\geq t+r-2.
\end{align}
Moreover, in case of equality $\tau_{t+r-3}(\hf)=t+r-2$.
\end{lem}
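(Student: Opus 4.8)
The plan is to exploit the two elementary observations stated just before the lemma: (i) if $t_j(\hf)=t_{j+1}(\hf)>0$ then $\hf$ is trivial, and (ii) if $t_j(\hf)=1+t_{j+1}(\hf)$, then for any $F_1,\dots,F_j\in\hf$ with $|F_1\cap\dots\cap F_j|=t_j(\hf)$, the set $T=F_1\cap\dots\cap F_j$ satisfies $|F\cap T|\ge|T|-1$ for every $F\in\hf$. These already encode the bulk of the work, so the proof is mostly bookkeeping.

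First I would record the monotonicity chain $t_2(\hf)\ge t_3(\hf)\ge\dots\ge t_r(\hf)\ge t$: repeating an edge shows that being $(j+1)$-wise $s$-intersecting implies being $j$-wise $s$-intersecting, and $t_r(\hf)\ge t$ holds by hypothesis. Since $t\ge 1$, every term $t_{j+1}(\hf)$ with $2\le j\le r-1$ is positive, so observation (i) together with the non-triviality of $\hf$ forces a strict drop $t_j(\hf)\ge t_{j+1}(\hf)+1$ for each such $j$. Chaining these $r-2$ strict inequalities downward from $j=r-1$ to $j=2$ gives $t_2(\hf)\ge t_r(\hf)+(r-2)\ge t+r-2$, which is \eqref{ineq-5.3}.

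For the equality statement, $t_2(\hf)=t+r-2$ forces every inequality in the chain to be tight, so $t_j(\hf)=t+(r-j)$ for all $2\le j\le r$; in particular $t_2(\hf)=1+t_3(\hf)$. Applying observation (ii) with $j=2$, I would pick $F_1,F_2\in\hf$ with $T:=F_1\cap F_2$ and $|T|=t_2(\hf)=t+r-2$; then $|F\cap T|\ge|T|-1=t+r-3$ for every $F\in\hf$, so $T$ is a $(t+r-3)$-transversal and hence $\tau_{t+r-3}(\hf)\le t+r-2$. For the reverse inequality, suppose $S$ were a $(t+r-3)$-transversal with $|S|\le t+r-3$. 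Then $|F\cap S|\ge t+r-3\ge|S|$ forces $S\subseteq F$ for every $F\in\hf$, and since $r\ge 3$ and $t\ge 1$ give $|S|=t+r-3\ge 1$, this would make $\hf$ trivial, a contradiction. Thus $\tau_{t+r-3}(\hf)=t+r-2$.

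I do not expect a genuine obstacle here. The only points needing a little care are the positivity conditions — $t_{j+1}(\hf)>0$ for observation (i), and $t+r-3\ge 1$ for the lower bound on $\tau_{t+r-3}(\hf)$, both guaranteed by $r\ge 3$ and $t\ge 1$ — and the remark that a $(t+r-3)$-transversal exists at all, which is immediate since the set $T$ produced above is one.
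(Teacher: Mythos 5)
Your proof is correct and follows essentially the same route as the paper: the strict-drop chain $t_j(\hf)\geq t_{j+1}(\hf)+1$ gives \eqref{ineq-5.3}, and in the equality case a pair realizing $t_2(\hf)$ yields a $(t+r-3)$-cover of size $t+r-2$, while a cover of size $t+r-3$ would force triviality. The only cosmetic difference is that you obtain $|F\cap T|\geq |T|-1$ from the tightness $t_2(\hf)=1+t_3(\hf)$ via the observation preceding the lemma, whereas the paper argues it directly by producing $r$ sets with intersection of size at most $t-1$; both are sound.
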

\begin{proof}
Since $\hf$ is non-trivial, $t_j(\hf)\geq t_{j+1}(\hf)+1$ for $2\leq j<r$. This implies \eqref{ineq-5.3}. Suppose now that equality holds and choose $T\in \binom{[n]}{t+r-2}$ with $T=F_1\cap F_2$ for some $F_1,F_2\in\hf$. If $|F_3\cap T|\leq t+r-4$ for some $F_3\in \hf$ then using non-triviality we can find $F_1,\ldots,F_r$ to satisfy $|F_1\cap \ldots \cap F_r|\leq t-1$, a contradiction.

Hence $|F\cap T|\geq |T|-1$ for all $F\in \hf$ proving that $T$ is a $(t+r-3)$-cover. If $\tau_{t+r-3}(\hf) =t+r-3$ then $\hf$ is trivial, thus $\tau_{t+r-3}=t+r-2$.
\end{proof}

Let us prove an inequality concerning $\varrho(\hf)$ for $r$-wise intersecting uniform families.

\begin{prop}
Let $\hf\subset\binom{[n]}{k}$ be non-trivial, saturated $r$-wise intersecting with $n\geq \frac{r}{r-1}k$. If $\tau_{r-1}(\hf)\leq r$, then $\varrho(\hf)>\frac{r}{r+1}$.
\end{prop}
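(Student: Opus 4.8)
The plan is to mimic the proof of the $t$-intersecting analogue (the Proposition just before the Remark in Section~4), where the key case $\tau_t(\hf)\le t+1$ forces one degree to be large. Here $\tau_{r-1}(\hf)\le r$ means there is an $(r-1)$-transversal $T$ of size at most $r$; since $\hf$ is non-trivial, $\tau_{r-1}(\hf)=r$ exactly (if $|T|=r-1$ then $T$ would be contained in every edge). So fix $T=[r]$ with $|F\cap[r]|\ge r-1$ for all $F\in\hf$. First I would split $\hf$ according to $F\cap[r]$: for $1\le i\le r$ set
\[
\hf_i=\{F\setminus[r]\colon F\in\hf,\ F\cap[r]=[r]\setminus\{i\}\},
\]
and $\hf_0=\hf([r])$. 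By saturatedness $\hf_0=\binom{[r+1,n]}{k-r}$, which is non-empty since $n\ge\frac{r}{r-1}k>k$.

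Next I would establish that the families $\hf_1,\dots,\hf_r$ are $r$-wise intersecting in the appropriate cross sense: because $\hf$ is $r$-wise intersecting, picking one edge with $F\cap[r]=[r]\setminus\{i\}$ for each $i=1,\dots,r$ gives $|F_1\cap\dots\cap F_r|\ge 1$ while $\bigcap_{i=1}^r([r]\setminus\{i\})=\emptyset$, so the sets $F_i\setminus[r]$ must share a common element outside $[r]$; that is, $\hf_1,\dots,\hf_r$ are cross-intersecting in the strong sense that any transversal choosing one set from each has nonempty intersection (so in particular any two of them are cross-intersecting). Then I apply Hilton's Lemma to the pair with the two smallest sizes: after relabelling so $|\hf_1|\le|\hf_2|\le\dots\le|\hf_r|$, the families $\hf_1$ and $\hf_2$ are cross-intersecting $(k-r)$-graphs on $n-r\ge 2(k-r)$ points (the inequality $n-r\ge2(k-r)$ follows from $n\ge\frac{r}{r-1}k\ge 2k-r$ for $r\ge2$), hence $\min\{|\hf_1|,|\hf_2|\}=|\hf_1|\le\binom{n-r-1}{k-r-1}<\binom{n-r}{k-r}=|\hf_0|$.

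Finally I would assemble the degree count at vertex $1$. We have
\[
\hf(1)=\hf_0\ \dot\cup\ \hf_2\ \dot\cup\ \hf_3\ \dot\cup\ \dots\ \dot\cup\ \hf_r,\qquad \hf(\bar 1)=\hf_1
\]
(an edge through $1$ either contains all of $[r]$, or misses exactly one $i\in[2,r]$; an edge avoiding $1$ must contain $[2,r]$ and hence equals some set of $\hf_1$ together with $[r]\setminus\{1\}$). Therefore
\[
|\hf(1)|=|\hf_0|+\sum_{i=2}^r|\hf_i|\ \ge\ |\hf_0|+(r-1)|\hf_1|\ >\ r|\hf_1|=r\,|\hf(\bar 1)|,
\]
using $|\hf_0|>|\hf_1|$ and $|\hf_i|\ge|\hf_1|$. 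Since $\varrho(\hf)\ge |\hf(1)|/(|\hf(1)|+|\hf(\bar1)|)$ and the function $x\mapsto x/(x+y)$ is increasing in $x$, the bound $|\hf(1)|>r|\hf(\bar1)|$ gives $\varrho(\hf)>\frac{r}{r+1}$, as desired.

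The main obstacle I anticipate is the step asserting that the residual families $\hf_1,\dots,\hf_r$ inherit an intersection property strong enough to invoke Hilton's Lemma: one must check carefully that $r$-wise intersectingness of $\hf$ really forces pairwise cross-intersectingness of the $\hf_i$'s (not merely the weaker ``no $r$ pairwise-disjoint transversal'' condition, which would not by itself license Hilton), and that the size hypothesis is not needed — it is not, because the Proposition assumes only saturation plus $\tau_{r-1}(\hf)\le r$, and saturation is exactly what pins down $\hf_0$ to the full $\binom{[r+1,n]}{k-r}$. Once that structural point is nailed down, the rest is the short Hilton-plus-averaging argument above.
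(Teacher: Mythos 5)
Your overall plan is the paper's: the same decomposition into $\hf_0=\hf(R)$ and $\hf_x$ for $x\in R$, the same use of saturation to get $\hf_0=\binom{[n]\setminus R}{k-r}$, and the same averaging at the vertex minimizing $|\hf_x|$. The gap is the step where you bound $\min_x|\hf_x|$ via Hilton's Lemma. Hilton's Lemma -- and indeed any conclusion drawn from the mere pairwise cross-intersection of $\hf_1,\hf_2$ -- needs the ground set $[n]\setminus R$, of size $n-r$, to contain at least $2(k-r)$ elements. Your justification ``$n\ge\frac{r}{r-1}k\ge 2k-r$ for $r\ge 2$'' is false in general: $\frac{r}{r-1}k\ge 2k-r$ is equivalent to $k(r-2)\le r(r-1)$, which fails for every $r\ge 3$ once $k>\frac{r(r-1)}{r-2}$ (e.g.\ $r=3$, $k=10$, $n=15$ satisfies $n\ge\frac{3}{2}k$ but $n-r=12<14=2(k-r)$). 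In that range any two $(k-r)$-subsets of $[n]\setminus R$ intersect automatically, so pairwise cross-intersection carries no information: a priori each $\hf_x$ could be all of $\binom{[n]\setminus R}{k-r}$, and the crucial inequality $\min_x|\hf_x|<|\hf_0|$ cannot be extracted this way. This is precisely the point of the Remark following the proposition: the Pyber/Hilton pairwise route only covers $n\ge 2k$ (or $n\ge 2k-r$ with your sharper count), while the hypothesis is only $n\ge\frac{r}{r-1}k$.

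The paper closes exactly this gap by using the $r$-wise cross-intersecting structure of $\{\hf_x\colon x\in R\}$, which you did establish (note that deducing pairwise cross-intersection from it requires each $\hf_x\neq\emptyset$; this does hold, since $\hf_x=\emptyset$ would force every edge to contain $x$, contradicting non-triviality). The product theorem for $r$-cross intersecting families of Frankl and Tokushige \cite{FT11} applies because $n-r\ge\frac{r}{r-1}(k-r)$ follows from $n\ge\frac{r}{r-1}k$, and it gives $\prod_{x\in R}|\hf_x|\le\binom{n-r-1}{k-r}^r$, hence some $z\in R$ with $|\hf_z|\le\binom{n-r-1}{k-r}<\binom{n-r}{k-r}=|\hf_0|$. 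With this substitute for your Hilton step, the remainder of your argument (the identity $|\hf(z)|=|\hf_0|+\sum_{x\neq z}|\hf_x|>r|\hf_z|=r|\hf(\bar z)|$ and the monotonicity of $x\mapsto x/(x+y)$) goes through verbatim and is the same as in the paper.
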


\begin{proof}
By \eqref{ineq-5.3} $\hf$ is two-wise $(r-1)$-intersecting. Let $R$ be an $(r-1)$-transversal of $\hf$ with $|R|\leq r$. If $|R|=r-1$ then $\hf$ is trivial $(r-1)$-intersecting. Thus we may assume that $|R|=r$.   Note that for all $F_1,\ldots,F_{r-1}\in \hf$,
\[
|F_1\cap \ldots\cap F_{r-1}\cap R|\geq r-(r-1)=1.
 \]
 Hence $\hf_0:=\hf(R)=\binom{[n]\setminus R}{k-r}$ by saturatedness. Define
\[
\hf_x = \{F\setminus R\colon F\in \hf, F\cap R=R\setminus \{x\}\} \mbox{ for } x\in R.
\]
Now $|\hf|=|\hf_0|+\sum\limits_{x\in R}|\hf_x|$. Obviously, $\{\hf_x\colon x\in R\}$ are $r$-wise cross-intersecting. By a result in \cite{FT11}
\[
\prod_{x\in R} |\hf_x| \leq \binom{n-r-1}{k-r}^r.
\]
Let us choose $z$ to minimize $|\hf_z|$. Then
\[
|\hf_z| \leq \binom{n-r-1}{k-r} <\binom{n-r}{k-r}=|\hf_0|.
\]
Thus we conclude that
\begin{align*}
\varrho(\hf)\geq \frac{|\hf(z)|}{|\hf|} =\frac{|\hf|-|\hf_z|}{|\hf|} =1-\frac{|\hf_z|}{|\hf_0| +\sum\limits_{x\in R}|\hf_x|} > 1-\frac{|\hf_z|}{(r+1)|\hf_z|} =\frac{r}{r+1}.
\end{align*}
\end{proof}

{\noindent\bf Remark.} Since $\hf$ is $(r-1)$-intersecting, $\hf_x,\hf_y$ are cross-intersecting. For $n\geq 2k$ we could use the product version of the Erd\H{o}s-Ko-Rado Theorem ($t=1$ case) due to Pyber \cite{Pyber86} . However, to cover the range $2k>n\geq \frac{r}{r-1}k$ as well we need \cite{FT11}, the product version of \cite{F76-2}.

\end{document}